\definecolor{darkblue}{rgb}{0.0,0,0.7}
\newcommand{\darkblue}{\color{darkblue}}
\definecolor{darkred}{rgb}{0.68,0,0}
\definecolor{darkgreen}{rgb}{0,.38,0}
\newcommand{\defn}[1]{\emph{\darkblue #1}}
\newcommand{\defnb}[1]{\emph{\darkblue #1}}
\setlist[enumerate]{
	label=\textnormal{({\roman*})},
	ref={\roman*}}
\def\th@plain{%
	\thm@notefont{}
	\itshape 
}
\def\th@definition{%
	\thm@notefont{}
	\normalfont 
}
\newtheorem{thm}{Theorem}[section]
\newtheorem{lemma}[thm]{Lemma}
\newtheorem*{claim*}{Claim}
\newtheorem{prop}[thm]{Proposition}
\newtheorem{conj}[thm]{Conjecture}
\theoremstyle{definition}
\newtheorem{rem}[thm]{Remark}
\numberwithin{figure}{section}
\numberwithin{equation}{section}
\def\zz{\mathbb Z}
\def\nn{\mathbb N}
\def\rr{\mathbb R}
\def\Ga{\Gamma}
\def\ga{\gamma}
\def\de{\delta}
\def\al{\alpha}
\def\be{\beta}
\def\cA{\mathcal A}
\def\cE{\mathcal E}
\def\cF{\mathcal F}
\def\cN{\mathcal N}
\def\cO{\mathcal O}
\def\cR{\mathcal R}
\def\cT{\mathcal T}
\def\<{\langle}
\def\>{\rangle}
\def\Z{\zz}
\def\SL{ {\text {\rm SL} } }
\def\Ups{{\small {\Upsilon}}}
\def\vt{\vartheta}
\def\0{{\mathbf 0}}
\def\.{\hskip.06cm}
\def\ts{\hskip.03cm}
\def\bv{\textbf{\textit{v}}}
\def\fC{\frak{C}}
\def\fF{\frak{F}}
\def\fS{\frak{S}}
\def\.{\hskip.06cm}
\def\ts{\hskip.03cm}
\def\G{\Ga}
\newenvironment{psmallmatrix}
  {\left(\begin{smallmatrix}}
  {\end{smallmatrix}\right)}
\def\cM{\mathcal M}
\DeclareMathOperator{\at}{\tau}
\DeclareMathOperator{\f}{\Phi}
\DeclareMathOperator{\g}{\Psi}
\DeclareMathOperator{\h}{\Ups}
\newcommand{\kk}{t}
\newcommand{\jj}{u}
\newcommand{\Ais}{110}
\newcommand{\Nis}{5}
\newcommand{\mattwos}[4]
{\bigl( \begin{smallmatrix}
                        #1  & #2   \\
                        #3 &  #4
\end{smallmatrix} \bigr)
}
\newcommand{\mattwo}[4]
{\left(\begin{array}{cc}
                        #1  & #2   \\
                        #3 &  #4
                          \end{array}\right) }
\def\R{\mathbb R}
\def\sL{\mathscr L}
\title[Spanning trees and continued fractions]
{Spanning trees and continued fractions}
\date{\today}
 \author{Swee Hong Chan}
 \address[Swee Hong Chan]{Department of Mathematics, Rutgers University,  Piscataway, NJ 08854.}
 \email{\texttt{sweehong.chan@rutgers.edu}}
 \author{Alex Kontorovich}
 \address[Alex Kontorovich]{Department of Mathematics, Rutgers University,  Piscataway, NJ 08854.}
 \email{\texttt{alex.kontorovich@rutgers.edu}}
 \author[\ts Igor Pak]{Igor Pak}
 \address[Igor Pak]{Department of Mathematics, UCLA,  Los Angeles, CA 90095.}
 \email{\texttt{pak@math.ucla.edu}}
\begin{document}

\begin{abstract}
We prove
the exponential growth of the cardinality of  the set of numbers of spanning trees in simple (and planar) graphs on $n$ vertices, answering a question of  Sedl\'a\v{c}ek from 1969.
The proof uses a connection with continued fractions,
``thin orbits,'' and
Zaremba's conjecture.
\end{abstract}
	
\maketitle

\section{Introduction}\label{s:intro}

\subsection{Main results} \label{ss:intro-main}
One of the most basic invariants of a graph \ts $G=(V,E)$, \ts is the number,
denoted $\tau(G)$, of spanning trees in~$G$.
This fascinating quantity measures a kind of ``complexity'' of $G$, and
appears in many different contexts across mathematical sciences:
from Commutative Algebra to Probability, from Lie Theory to
Combinatorial Optimization, etc.

The study of the set of \emph{spanning tree numbers} $\tau(G)$ for $G$ ranging in various families 
of simple graphs  graded by the number 
of vertices of $G$, was initiated almost 60 years ago in a series of papers by Sedl\'a\v{c}ek  \cite{Sed66, Sed69, Sed70}, who highlighted in particular the families: $(i)$ of all simple graphs, $(ii)$ $k$-regular graphs, and $(iii)$ planar graphs. In this paper, we focus on the latter; see \S\ref{ss:finrem-graphs} and \S\ref{sec:Spec} for some discussion on the others.

Recall that graph is called \emph{simple} if it has no loops or multiple edges.
For \ts $n\ge3$, let $\cT(n)$ denote the
 set of numbers of
spanning trees of connected planar simple graphs on~$n$ vertices:
\begin{equation}\label{eq:def-alpha}
\cT(n) \, := \, \big\{\ts \tau(G) \, : \, G=(V,E) \ \text{is
connected,
simple, and planar, and} \ |V| = n\ts\big\}.
\end{equation}
For example,   
\. $\cT(4)=\{1,3,4,8,16\}$ \. 
and \. $\cT(5) = \{1,3,4,5,8,9,11,12,16,20,21,24,40,45,75\}$.
It is easy to see that the sets \ts $\cT(n)$ \ts are nested, i.e.\
\ts $\cT(n) \subseteq \cT(n+1)$, that \ts $n \in \cT(n)$, and that
\ts $2\notin \cT(n)$ \ts for all~$n$.

Let \ts $|\cT(n)|$ \ts be the cardinality of \ts $\cT(n)$,
%
%
 that is,
the number of distinct values of \ts $\tau(G)$ \ts in \ts $\cT(n)$;
so \ts $|\cT(4)|=5$ \ts and \ts $|\cT(5)|=15$.
A simple argument using Euler characteristic (see \S\ref{ss:exp-upper-bnd}), shows that
\begin{equation}\label{eq:Tn8n}
\max\cT(n)<C^{n},
\end{equation}
for  some \ts $C>1$. Hence \ts
 $|\cT(n)|$ \ts grows at most exponentially:
$$
|\cT(n)|<C^{n},
$$
for all \ts $n$ \ts sufficiently large.
The first lower bound was given already by Sedl\'a\v{c}ek himself in 1969,
who
proved
that \ts $|\cT(n)| = \Omega(n^2)$, see  \cite{Sed69}.  In \cite{Aza14}, Azarija showed that
\ts $|\cT(n)| = e^{\Omega(\sqrt{n/\log n})}$.
It follows from Stong's theorem (Theorem~\ref{t:Stong}) that \ts
$|\cT(n)| = e^{\Omega(n^{2/3})}$.
Our
main
result shows that the sequence $|\cT(n)|$
indeed
has exponential growth:

\begin{thm}[{\rm Main Theorem}{}]\label{t:main-exp} \. There exists a constant \. $c>1
$ \. such that
\begin{equation}\label{eq:main-exp-const}
|\cT(n)| \,
> \, c^{n}
\end{equation}
holds for all sufficiently large $n$.
\end{thm}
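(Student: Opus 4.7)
The plan is to realize exponentially many distinct integers as spanning-tree numbers of simple planar graphs on linearly many vertices, via \emph{continuants}. For a sequence $\mathbf{a}=(a_1,\ldots,a_k)$ of positive integers, let $K(\mathbf{a})$ denote the continuant, i.e.\ the denominator of the continued fraction $[a_1;a_2,\ldots,a_k]$; equivalently, $K(\mathbf{a})$ is the $(1,1)$-entry of $M_{a_1}\cdots M_{a_k}$ with $M_a=\mattwos{a}{1}{1}{0}$. Fix an absolute constant $A$ (say the Zaremba constant $A=5$). The strategy is to construct, for each sequence $\mathbf{a}$ with $1\le a_i\le A$, a connected simple planar graph $G_{\mathbf{a}}$ on $n=O_A(k)$ vertices satisfying $\tau(G_{\mathbf{a}})=K(\mathbf{a})$, and then to invoke thin-orbit / Zaremba results to count distinct continuant values.

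I would build $G_{\mathbf{a}}$ as a series chain of $k$ planar gadgets, where the $i$-th gadget is a small subdivided theta-type piece on $O(a_i)=O(1)$ vertices. Maintain a two-dimensional state $(p_i,p_{i-1})$ that tracks $\tau$ of the graph built so far under two different boundary identifications; attaching the $i$-th gadget and applying the matrix-tree theorem (or iterated deletion--contraction at the new subdivision vertices) multiplies this state by $M_{a_i}$, producing the continuant recurrence $p_i=a_ip_{i-1}+p_{i-2}$. Consequently $\tau(G_{\mathbf{a}})$ equals the $(1,1)$-entry $K(\mathbf{a})$. Planarity is immediate from the linear layout, and simplicity is enforced by degree-$2$ subdivision vertices that separate every would-be parallel edge, with the subdivision factors designed to be exactly absorbed by the recurrence.

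For the counting, by results on thin orbits of the semigroup $\<M_1,\ldots,M_A\>\subset \GL_2(\Z)$ toward Zaremba's conjecture---in particular the Bourgain--Kontorovich theorem---the set of continuants $\{K(\mathbf{a}):1\le a_i\le A\}$ occupies a positive density of $\mathbb{N}$, hence contains $\gg c^n$ integers realizable by graphs on $n=O_A(k)$ vertices for some $c>1$. Combined with the nesting $\cT(n)\subseteq\cT(n+1)$, this yields $|\cT(n)|>c^n$ for all sufficiently large $n$. I expect the main obstacle to be the combinatorial construction: naive series-parallel realizations produce \emph{multigraphs}, and subdividing edges generically rescales $\tau$ by extra factors that disrupt the continuant recurrence. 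The challenge is to design gadgets that are simultaneously simple, planar, of bounded size (so that $n$ remains linear in $k$ with an absolute constant), and whose iterated deletion--contraction yields \emph{exactly} the continuant. Once this construction is in hand, the thin-orbit input provides the exponential count essentially as a black box.
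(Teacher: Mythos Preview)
Your overall architecture---continuants as spanning-tree numbers via a two-dimensional deletion/contraction state, then count distinct continuants---is exactly the paper's. But the proposal has a genuine gap at the point you yourself flag as ``the main obstacle.'' You assert that one can design bounded-size simple planar gadgets whose attachment multiplies the state by $M_{a}=\bigl(\begin{smallmatrix}a&1\\1&0\end{smallmatrix}\bigr)$ for \emph{arbitrary} bounded $a$, with the subdivisions ``exactly absorbed by the recurrence.'' This is not substantiated, and the natural attempt fails: the two elementary moves on a marked edge are subdivision ($\Phi$, state multiplied by $\bigl(\begin{smallmatrix}1&1\\0&1\end{smallmatrix}\bigr)$) and adding a parallel edge ($\Psi$, state multiplied by $\bigl(\begin{smallmatrix}1&0\\1&1\end{smallmatrix}\bigr)$). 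Realizing $M_{a_1}M_{a_2}\cdots$ requires blocks $\Psi^{a_{2i-1}}\Phi^{a_{2i}}$, and simplicity forces each $\Psi$ to be immediately followed by a $\Phi$ (to subdivide the new parallel edge). That constraint does not let you hit an arbitrary word in the $M_a$'s; it forces the continued fraction to have $1$'s in alternating positions. The paper's resolution is precisely this: restrict to fractions $[b_1,1,b_2,1,\ldots,b_m,1]$ and work in the subsemigroup generated by $\bigl(\begin{smallmatrix}0&1\\1&1\end{smallmatrix}\bigr)\bigl(\begin{smallmatrix}0&1\\1&b\end{smallmatrix}\bigr)$, $1\le b\le A$. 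Your ``theta-type'' gadget description does not circumvent this, and without an explicit gadget the claimed realization of arbitrary $K(\mathbf a)$ is unproven.

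This has a knock-on effect on your counting step. If you could realize all bounded-quotient continuants, then the standard Bourgain--Kontorovich theorem (for $\fF_A$, whose dimension $\delta_A\to1$) would indeed give positive density as a black box. But once you are forced into the alternating-$1$ alphabet, the relevant limit set is $\fC_A$, whose dimension $\vt_A$ is bounded above by about $0.799$; applying the circle-method machinery there requires the separate (and delicate) verification that $\vt_A$ exceeds the threshold $\delta_0\approx0.775$. For Theorem~\ref{t:main-exp} alone this is all unnecessary: once the graph construction is in place, the elementary ball count $|B_N|=N^{2\vt_A+o(1)}$ together with the trivial multiplicity bound (or the sum-set trick $t\mapsto t+u$) already gives $\gg N^{\vt_A-o(1)}$ distinct values below $N$, hence exponential growth. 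So your invocation of Bourgain--Kontorovich is both heavier than needed and, as stated (for the full $\fF_A$), inapplicable to the semigroup that actually arises.
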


\begin{rem}
Even for the family of {\it all} simple (not necessarily planar) graphs, \eqref{eq:main-exp-const} proves exponential growth for the first time.
The best previous lower bound was Stong's bound above.
\end{rem}

See \S\ref{ss:finrem-limit} for an explicit 
estimate for
the value \ts $c\approx 1.1103$ \ts derived from our proof method; we made no effort to optimize this constant.
To understand why the lower bound remained out of reach until now, note that
there are only exponentially many nonisomorphic connected simple planar graphs to
begin with,\footnote{While the exact asymptotics remain open, the number of unlabeled
simple planar graphs on $n$ vertices is \ts $O(30.061^n)$, see \cite{B+06},
\cite[$\S$6.9.2]{Noy15} and \cite[\href{https://oeis.org/A003094}{A003094}]{OEIS}.
}
and exponentially many of them may have the same number of spanning trees.\footnote{For
example, there are \ts $\Omega(2.955^n)$ \ts unlabeled trees on~$n$ vertices, see e.g.\
\cite[\href{https://oeis.org/A000055}{A000055}]{OEIS} and \cite[\href{https://oeis.org/A000055}{A051491}]{OEIS}.}

In fact, we are able to prove an even stronger theorem, namely
that the set \ts $\cT(n)$ \ts contains a positive proportion of an exponentially long
set of
integers. 


\begin{thm}
\label{t:main}
There exists a constant \ts $c>1$, such that
\begin{equation}\label{eq:main-thm}
\liminf_{n\to\infty}\frac{1}{c^n} \, \big|\cT(n) \cap \{1,\ldots,c^n\}\big|  \. > \. 0. 
\end{equation}
\end{thm}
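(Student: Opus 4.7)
My plan is to combine two ingredients: a graph-theoretic gadget construction which realizes \emph{every} continued-fraction denominator as the number of spanning trees of some simple connected planar graph of comparable size, together with the Bourgain--Kontorovich theorem on Zaremba's conjecture, which supplies positive density of such denominators in $[1,N]$.

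First I would construct, for each finite sequence $(a_1,\dots,a_k)$ of positive integers, a connected simple planar graph $G=G(a_1,\dots,a_k)$ with
\[
\tau\bigl(G(a_1,\dots,a_k)\bigr)\,=\,K(a_1,\dots,a_k) \qquad \text{and}\qquad |V(G)|\,\le\, C_0\.(a_1+\cdots+a_k)+O(1),
\]
where $K(a_1,\dots,a_k)$ denotes the continuant (equivalently, the denominator of $[a_1;a_2,\dots,a_k]$) and $C_0$ is an absolute constant. The graph should be assembled as a series chain of planar gadgets, with the $i$-th gadget responsible for the partial quotient $a_i$, arranged so that a deletion-contraction at the gadget interfaces reproduces the continuant recursion $K(a_1,\dots,a_k)=a_k\.K(a_1,\dots,a_{k-1})+K(a_1,\dots,a_{k-2})$. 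A naive realization replacing $a_i$ by $a_i$ parallel edges fails simplicity; a careful subdivision into short paths of length $2$ (or $3$, to protect planarity), together with an amortized analysis of the matrix-tree computation, should deliver the vertex bound $|V(G)|=O\bigl(\sum a_i\bigr)$ while keeping the spanning-tree count exactly equal to the continuant.

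Second, I would feed this construction into the Bourgain--Kontorovich theorem on thin orbits: there exist an absolute $A\ge 2$ and $\delta_0>0$ such that, for all sufficiently large $N$, the set
\[
Z_N \,:=\, \bigl\{\.q\in[1,N]:\ q=K(a_1,\dots,a_k)\ \text{for some}\ k\ge 1\ \text{and}\ a_i\in\{1,\dots,A\}\.\bigr\}
\]
has size $|Z_N|\ge \delta_0\.N$. Since $K(a_1,\dots,a_k)\ge \phi^{\.k-1}$ (where $\phi$ is the golden ratio) whenever $a_i\ge 1$, each $q\in Z_N$ admits a representation with $k\le \log_\phi N+O(1)$, and hence $\sum a_i\le Ak\le A\log_\phi N+O(1)$. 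Applying the construction from the first step, each such $q$ is realized as $\tau(G)$ for some connected simple planar $G$ with $|V(G)|\le n_0(N):=C_0 A\log_\phi N+O(1)$. Setting $c:=\phi^{\.1/(C_0 A)}$ and $N:=c^n$, we have $n_0(N)\le n$ for all large $n$; by the nestedness $\cT(m)\subseteq \cT(n)$ for $m\le n$ recorded in the introduction, every element of $Z_N$ lies in $\cT(n)$, and therefore
\[
|\cT(n)\cap [1,c^n]|\,\ge\,|Z_{c^n}|\,\ge\,\delta_0\.c^n,
\]
which is the claimed inequality.

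The main obstacle is the first step. Inside the rigid class of simple planar graphs, producing an \emph{exact} continuant while keeping the vertex count linear in $\sum a_i$ is delicate: the standard families (fans, wheels, ladders, bundles of parallel edges) either realize only special one-parameter sub-families of continuants, fail simplicity, or require subdivisions that destroy either planarity or the continuant structure. By contrast, the second step can be invoked as a black box from the analytic theory of thin orbits, and the packaging into Theorem~\ref{t:main} is essentially bookkeeping once the gadget is in hand; the explicit constant $c\approx 1.1103$ mentioned in the introduction should emerge from optimizing the alphabet size $A$ against the efficiency constant $C_0$ of the gadget construction.
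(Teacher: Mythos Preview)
Your high-level strategy---combine a gadget realizing continued-fraction data as spanning-tree counts with Bourgain--Kontorovich---is exactly the paper's. But your Step~1, as stated, does not work, and repairing it cascades into work you have not anticipated. The only simplicity-preserving move at a marked edge is: add one parallel edge, then subdivide by a path of length~$b$. On the spanning-tree vector $(\tau(G-e),\tau(G/e))$ this is right-multiplication by $\begin{psmallmatrix}1&0\\1&1\end{psmallmatrix}\begin{psmallmatrix}1&b\\0&1\end{psmallmatrix}=\begin{psmallmatrix}0&1\\1&1\end{psmallmatrix}\begin{psmallmatrix}0&1\\1&b\end{psmallmatrix}$, so the continuants reachable while staying simple are exactly those with partial-quotient pattern $b_1,1,b_2,1,\dots,b_m,1$. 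Your ``careful subdivision\ldots keeping the spanning-tree count exactly equal to the continuant'' is the mistake: subdivision \emph{changes} the continuant to one of this restricted shape; it does not preserve $K(a_1,\dots,a_k)$. The paper's Main Graph Theorem records precisely this, realizing $\tau(G)=t$ for $t/u=[b_1,1,\dots,b_m,1]$ with $|V|=b_2+\cdots+b_m+2$.

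Because only the alternating alphabet is available, you cannot cite Bourgain--Kontorovich for Zaremba's conjecture as a black box. You need the same positive-proportion statement for the semigroup $\bigl\langle\begin{psmallmatrix}0&1\\1&1\end{psmallmatrix}\begin{psmallmatrix}0&1\\1&b\end{psmallmatrix}:1\le b\le A\bigr\rangle$, which requires (i) checking that the orbital circle-method analysis (with Kan's threshold $\de_0\approx0.775$) transfers to this semigroup, and crucially (ii) verifying that the Hausdorff dimension $\vt_A$ of the limit set $\fC_A=\{[b_1,1,b_2,1,\ldots]:b_i\le A\}$ exceeds~$\de_0$ for some~$A$. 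Step~(ii) is far from automatic: even $\vt_\infty<0.799$, so the margin over $\de_0$ is under~$0.025$, and the paper devotes an entire section to a rigorous Pollicott--Vytnova transfer-operator computation showing $\vt_{110}>0.775$. Your proposal packages Step~2 as a one-line citation, but the paper's main technical contribution is exactly the adaptation of that machinery to the restricted alphabet forced on you by the graph construction.
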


Moreover,
it is possible to
improve Theorem~\ref{t:main} from positive proportion to density one, that is,
that the expression in \eqref{eq:main-thm} tends to $1$; see Remark~\ref{rmk:Huang}.
For connections to spectra of operators on locally homogeneous spaces, see \S\ref{sec:Spec}.

%

\smallskip

\subsection{Ingredients} \label{ss:intro-meth}
As we describe in more detail below, there are four main steps in the proofs of Theorems~\ref{t:main-exp} and~\ref{t:main}.
\begin{enumerate}
\item The first is to dualize (cf.~$\S$\ref{ss:intro-back}), replacing the study of $\cT(n)$ with the closely related quantity $\alpha(\kk)$ defined in \eqref{eq:alKdef}.
\item
The second step is to employ a mechanism that constructs simple planar graphs $G=(V,E)$ with a prescribed number of spanning trees, $\tau(G)$, in such a way as to simultaneously allow some control on the number $|V|$ of vertices. It turns out (cf. \S\ref{ss:intro-CF}) that this control relies on certain Diophantine-theoretic properties of continued fraction expansions of rational numbers.
This reduces the graph theory problem to one in Diophantine geometry.
\item
The Diophantine problem turns out to be amenable to
 techniques similar to those attacking Zaremba's conjecture (cf. \S\ref{ss:intro-Zaremba}); one is then able to produce a large enough collection of the desired fractions, as long as a certain Cantor-like fractal's Hausdorff dimension is sufficiently large.
 \item
 And the final ingredient is to verify numerically and rigorously (cf. \S\ref{ss:intro-H-dim})  that this fractal dimension is indeed large enough.
\end{enumerate}

Note that
steps (i) and (ii) already suffice to give an elementary proof of  Theorem~\ref{t:main-exp} (see \S\ref{sec:Pf1p1}).\footnote{See 
also a follow-up paper \cite{ABG} which uses a closely related approach to give an even simpler proof of Theorem~\ref{t:main-exp}.  
The tools in \cite{ABG} do not imply our main result Theorem~\ref{t:main}.}
For Theorem~\ref{t:main}, it is in step (iv) that we turn out to be extraordinarily lucky: the currently best available sufficiency condition is indeed satisfied by the numerics, but only just barely, in the hundredths place! (See Remark~\ref{rmk:vtInfty}.)

\smallskip

\subsection{Dualizing} \label{ss:intro-back}
To better understand Theorem~\ref{t:main-exp},
consider the sets dual to  $\cT(n)$, namely, for $\kk\ge3$, we define $\widehat\cT(\kk):=\{n \ : \ \kk \in \cT(n)\}$. In light of the nesting property $\cT(n)\subseteq\cT(n+1)$, the only quantity of interest for $\widehat\cT(\kk)$ is its minimal element, which we denote by
\begin{equation}\label{eq:alKdef}
\al(\kk)\ := \ \min
\widehat\cT(\kk)
\ = \ \min \{n \ : \ \kk \in \cT(n)\}.
\end{equation}
That is, $\al(\kk)$ is
the smallest number of vertices
of a planar simple graph with exactly $\kk$ spanning trees.
Then $\widehat\cT(\kk)$ consists of $\al(\kk)$, followed by every subsequent integer.
By
\eqref{eq:Tn8n},
we have that
\begin{equation}\label{eq:alOmega}
\al(\kk) = \Omega(\log \kk).
\end{equation}


The study of $\al(\kk)$ was also initiated by Sedl\'{a}\v{c}ek \cite{Sed70} and continued
over the years, see \cite{AS13, Aza14, CP-CF} and~$\S$\ref{ss:finrem-graphs}.  Until Stong's recent
breakthrough, even \ts $\al(\kk) = o(\kk)$ \ts remained open, see \cite[Question~1]{AS13}.

\begin{thm}[{\rm Stong \cite[Cor.~7.3.1]{Stong}}{}] \label{t:Stong}
$\al(\kk) = O\big((\log \kk)^{3/2}/(\log\log \kk)\big)$.
\end{thm}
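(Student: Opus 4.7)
The plan is to follow the continued-fraction strategy foreshadowed in the introduction, converting the graph-theoretic bound on $\al(\kk)$ into a Diophantine question about representing $\kk$ as a continuant, and then optimising the trade-off between length and partial-quotient size.

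\emph{Step 1 (continuant graphs).} First, for each tuple of positive integers $\mathbf{a}=(a_1,\dots,a_r)$, I would construct a planar simple graph $G(\mathbf{a})$ with $\tau(G(\mathbf{a}))=K(\mathbf{a})$, the classical continuant, and $|V(G(\mathbf{a}))|=O(r+a_1+\cdots+a_r)$. Each $a_i$ is encoded by attaching a small planar ``theta''-type gadget of $O(1+a_i)$ vertices along a spine, arranged so that, after Schur-complementing away the interior vertices, the reduced Laplacian becomes exactly the tridiagonal matrix whose determinant is $K(\mathbf{a})$. Planarity and simplicity are local properties of each gadget, and the vertex count is immediate. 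This construction is in the spirit of the spanning-tree/continued-fraction dictionary developed in \cite{CP-CF}.

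\emph{Step 2 (Diophantine optimisation).} The previous step reduces the bound on $\al(\kk)$ to bounding $\sigma(\kk) := \min\{\,r + a_1+\cdots+a_r : \kk = K(a_1,\dots,a_r)\,\}$. Fixing a parameter $M$ and restricting to tuples with $a_i\le M$, one needs length $r \gtrsim \log \kk/\log(M{+}1)$, so the cost is at most $Mr \asymp M\log \kk/\log M$. To actually \emph{reach} the integer $\kk$ exactly, one needs a covering statement: every integer in $[1,\kk]$ should be hit by some continuant with partial quotients $\le M$ and length $\le r$. One uses unconditional fractal/density estimates for the set $\{K(a_1,\dots,a_r):a_i\le M\}$---of cardinality $\Omega(M^r)$ and with Cantor-set gaps controlled by continued-fraction cylinder sizes---to conclude that, for the balance $M\asymp \sqrt{\log \kk}$ and $r \asymp \sqrt{\log \kk}/\log\log \kk$, the image covers every target integer. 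If some $\kk$ is missed, absorb the residue via a single-edge deletion-contraction $\tau(G)=\tau(G\setminus e)+\tau(G/e)$, writing $\kk$ as a sum of two nearby continuants at negligible extra vertex cost.

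\emph{Main obstacle.} The hardest step is the covering claim in Step~2: proving unconditionally that every integer is a continuant with partial quotients of size $O(\sqrt{\log \kk})$. A strong form of Zaremba's conjecture would allow $M$ to be an absolute constant and yield the conjecturally optimal $\al(\kk)=O(\log \kk)$, but Zaremba is open; the compromise $M\asymp\sqrt{\log \kk}$ is what forces the exponent $3/2$, and the $\log\log \kk$ in the denominator tracks the precision needed to guarantee that the set of bounded-partial-quotient continuants leaves no gap large enough to miss the target. Most of the genuine work---and the reason the bound is what it is rather than linear in $\log \kk$---resides in this number-theoretic covering analysis.
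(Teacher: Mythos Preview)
First, a framing point: the paper does \emph{not} give its own proof of this statement.  Theorem~\ref{t:Stong} is quoted from Stong's paper \cite{Stong} and used as background; there is no argument here to compare against.  So your proposal stands or falls on its own merits.

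On those merits, Step~1 is fine and is essentially the content of the paper's Main Graph Theorem~\ref{thm:graphMain} / Lemma~\ref{l:CF-graph-2}: continuants are realised as spanning-tree counts of simple planar graphs, with vertex count controlled by the sum of partial quotients.  The real issue is Step~2, and here there is a genuine gap.  What you are asserting is that \emph{every} integer $\kk$ is a continuant with all partial quotients $\le M$ for $M\asymp\sqrt{\log \kk}$.  That is exactly a weak Zaremba statement (with $A$ allowed to grow slowly), and it is not known unconditionally.  ``Unconditional fractal/density estimates'' only tell you that the set of such continuants is large --- indeed for $M\to\infty$ the Hausdorff dimension $\de_M\to 1$ --- but even the full Bourgain--Kontorovich machinery (Theorems~\ref{t:BK'}, \ref{t:BK1}) yields only a positive-proportion or density-one set of admissible integers, never \emph{all} of them; see the discussion in~\S\ref{ss:finrem-asy} on why the orbital circle method cannot close this last gap.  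Your proposed fallback (``absorb the residue via deletion--contraction, writing $\kk$ as a sum of two nearby continuants'') does not rescue the argument: in the identity $\tau(G)=\tau(G-e)+\tau(G/e)$, the pair $(\tau(G-e),\tau(G/e))$ is not a free pair of integers you may prescribe independently --- it is exactly the spanning-tree vector $\bv(G,e)$ of Lemmas~\ref{lem:rec-f}--\ref{lem:rec-g}, and its possible values are again governed by continued-fraction expansions.  So the fallback is circular.

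In short: your outline reduces $\al(\kk)=O\big((\log \kk)^{3/2}/\log\log \kk\big)$ to a Diophantine covering statement that is itself an open problem of Zaremba type.  Stong's argument in \cite{Stong} must bypass this; if you want to reconstruct the bound, you should look for a construction that reaches \emph{every} $\kk$ by design (e.g.\ a scheme that mimics writing $\kk$ in a base of size roughly $\sqrt{\log \kk}$ using graph operations), rather than one that relies on a covering lemma for bounded continuants.
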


It is natural to conjecture (see also Proposition \ref{p:Zaremba-strong}),
that the true upper bound for $\al(\kk)$
matches the lower bound in \eqref{eq:alOmega}:

\begin{conj} \label{conj:main}
$\al(\kk) = O(\log \kk)$ \. for all \. $\kk \ge 3$.
\end{conj}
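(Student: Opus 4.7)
The plan is to reduce Conjecture~\ref{conj:main} to a quantitatively optimal form of Zaremba's conjecture on bounded partial quotients, via a graph-theoretic construction of the type foreshadowed in \S\ref{ss:intro-meth}.

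\emph{Step 1 (Construction).} For each finite tuple $\mathbf{a}=(a_1,\ldots,a_m)$ of positive integers, I would exhibit a connected simple planar graph $G_{\mathbf{a}}$ whose spanning-tree count $\tau(G_{\mathbf{a}})$ equals the continuant $K(a_1,\ldots,a_m)$ --- equivalently, the numerator of the continued fraction $[a_1;a_2,\ldots,a_m]$ --- while its vertex count satisfies $|V(G_{\mathbf{a}})|\le C_0\,(a_1+\cdots+a_m)$ for some absolute constant $C_0$. The natural template is a chain of simple-planar ``gadgets'', each contributing a prescribed linear factor to the transfer matrix associated to the matrix--tree theorem; the continuant recurrence then emerges automatically from the series-parallel composition. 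A convenient choice is the family already implicit in \cite{Aza14,Stong,CP-CF}, suitably optimized so that $|V|$ is linear in $\sum a_i$ (rather than in $\sum a_i^2$ or similar), and so that simplicity and planarity are preserved under gluing.

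\emph{Step 2 (Diophantine input).} Suppose $\kk\ge 3$ admits a representation $\kk=K(a_1,\ldots,a_m)$ with partial quotients bounded by an absolute constant $A$. Since the continuant satisfies $K(a_1,\ldots,a_m)\ge F_{m+1}\ge \varphi^{m}/\sqrt{5}$, this forces $m=O(\log\kk)$, and so
\[
\al(\kk)\ \le\ |V(G_{\mathbf{a}})|\ \le\ C_0\sum_{i=1}^{m} a_i\ \le\ C_0 A\,m\ =\ O(\log\kk),
\]
which is exactly Conjecture~\ref{conj:main}. The problem is thus reduced to showing that there exists an absolute $A$ such that \emph{every} integer $\kk\ge 3$ arises as such a continuant. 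Using the palindromic symmetry $K(a_1,\ldots,a_m)=K(a_m,\ldots,a_1)$ and standard identities relating numerators and denominators of continued fractions, this is equivalent, up to a harmless change of constant $A$, to Zaremba's conjecture itself (cf.\ Proposition~\ref{p:Zaremba-strong}).

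\emph{Main obstacle.} The reduction above is essentially graph-theoretic and routine; the hard part --- and the reason Conjecture~\ref{conj:main} remains open --- is the unconditional resolution of Zaremba. The thin-orbit and sum--product machinery used in this paper (see \S\ref{ss:intro-Zaremba}--\S\ref{ss:intro-H-dim}) produces, for $A$ large enough that the relevant Cantor-like set has Hausdorff dimension exceeding the threshold $1/2$, only a positive-density (eventually density-one, cf.\ Remark~\ref{rmk:Huang}) set of $\kk$ admitting bounded-partial-quotient continuant representations. Covering the remaining density-zero exceptional set appears to require genuinely new Diophantine input --- perhaps a Vinogradov-type cancellation beyond what current dispersion methods provide --- rather than any refinement of the graph-theoretic construction. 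This is the true barrier to a full proof of Conjecture~\ref{conj:main}.
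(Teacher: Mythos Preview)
The statement is an open conjecture, and you correctly identify that the ultimate obstruction is Diophantine. Your reduction plan parallels the paper's own Proposition~\ref{p:Zaremba-strong}, but there is a real gap in Step~1.

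You assert the existence of a simple planar $G_{\mathbf a}$ with $\tau(G_{\mathbf a})=K(a_1,\ldots,a_m)$ and $|V|=O(\sum a_i)$ for an \emph{arbitrary} tuple, calling this routine. The paper's actual construction (Theorem~\ref{thm:graphMain}) achieves this only for continuants of the special alternating form $K(b_1,1,b_2,1,\ldots,b_m,1)$, and the restriction is forced by simplicity: the two basic marked-graph operations are subdivision $\Phi^k$ (matrix $\bigl(\begin{smallmatrix}1&k\\0&1\end{smallmatrix}\bigr)$) and parallel-edge addition $\Psi^k$ (matrix $\bigl(\begin{smallmatrix}1&0\\k&1\end{smallmatrix}\bigr)$), and $\Psi^k$ destroys simplicity for $k\ge2$. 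The only simplicity-preserving composite is $\Phi^k\Psi^1$, which pins every other partial quotient to~$1$. This is exactly why the paper formulates the separate Diophantine Conjecture~\ref{conj:Zaremba-strong} rather than invoking Zaremba's Conjecture~\ref{conj:Zaremba}; your claim that the two are equivalent ``up to a harmless change of constant'' via Proposition~\ref{p:Zaremba-strong} misreads that proposition, which reduces Conjecture~\ref{conj:main} to Conjecture~\ref{conj:Zaremba-strong}, not to Zaremba. The references you cite do not supply the general gadget either --- indeed the paper records (\S\ref{ss:finrem-ave}) that Zaremba yields only $\beta(\kk)=O(\log\kk)$ for \emph{multigraphs}, not the simple-graph bound on $\alpha(\kk)$. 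So either Step~1 requires a genuinely new construction you have not provided, or the Diophantine input must be the stronger Conjecture~\ref{conj:Zaremba-strong}. (A minor point: the dimension threshold for positive density is $\delta_0\approx0.775$, not $1/2$; the latter yields only power growth, see~\S\ref{sec:Pf1p1}.)
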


Note that \ts $\al(pq) \le \al(p) + \al(q)$ (by taking disjoint union of graphs), so it suffices to
prove the conjecture for prime~$\kk$.
For Theorem~\ref{t:main-exp}, the following weak progress suffices:\footnote{We thank
Dmitry Krachun for pointing this out.}
\begin{thm}\label{thm:main1p1}
The set of \ts $\kk$ \ts for which Conjecture~\ref{conj:main} holds grows at least like a power. That is, there exist $c, C >0$ so that
\. $
\#\{t<T \ : \ \alpha(t) \, < \, C\log t\} \,  > \, T^c.
$
\end{thm}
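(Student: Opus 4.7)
The plan is to reduce Theorem~\ref{thm:main1p1} to a power-savings count of rationals whose continued fractions have bounded partial quotients, and then to invoke Hensley's unconditional bound for this count. The reduction is supplied directly by the graph-construction mechanism of step (ii).

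First, I would use step (ii) to associate, to any tuple $(a_1, \ldots, a_s) \in \Z_{\geq 1}^s$, a connected simple planar graph $G = G(a_1, \ldots, a_s)$ whose spanning-tree count $\tau(G)$ equals the continuant $q_s(a_1, \ldots, a_s)$ (i.e.\ the denominator of $[0; a_1, \ldots, a_s]$) and whose vertex count satisfies $|V(G)| \leq C_0 (a_1 + \cdots + a_s)$ for some absolute $C_0$. Fix any integer $A \geq 2$. When every $a_i \leq A$, one has $\sum_i a_i \leq As$, and the standard continuant lower bound $q_s \geq F_{s+1}$ gives $s \leq C_1 \log q_s$. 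Hence
\[
\alpha(q_s) \,\leq\, |V(G)| \,\leq\, C_0 A s \,\leq\, C_2 \log q_s,
\]
with $C_2 = C_2(A)$.

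Second, set
\[
\mathcal{D}_A(T) \,:=\, \bigl\{\, q \leq T \,:\, q = q_s(a_1, \ldots, a_s) \text{ for some } s \text{ and some } a_i \in \{1, \ldots, A\} \bigr\}.
\]
By Hensley's theorem on the bounded-digit continued fraction Cantor set $E_A \subset [0,1]$, the number of coprime pairs $(p, q)$ with $p/q = [0; a_1, \ldots, a_s]$, all $a_i \leq A$, and $q \leq T$ is $\asymp T^{2\delta_A}$, where $\delta_A$ is the Hausdorff dimension of $E_A$. Since each $q \leq T$ is hit by at most $\phi(q) \leq T$ admissible numerators, one obtains the crude but sufficient lower bound $|\mathcal{D}_A(T)| \gg T^{2\delta_A - 1}$. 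Hensley's numerical estimate $\delta_2 > 1/2$ already yields a positive exponent, and enlarging $A$ only improves it.

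Combining the two steps, every $q \in \mathcal{D}_A(T)$ satisfies $\alpha(q) \leq C_2 \log q$, so
\[
\#\{\, t < T \,:\, \alpha(t) < C_2 \log t \,\} \,\geq\, |\mathcal{D}_A(T)| \,\gg\, T^{c}
\]
for some $c = c(A) > 0$, which is the claim. The main obstacle is the counting in the second step: the gap between $T^{2\delta_A - 1}$ and the trivial upper bound $T$ (the density target of Conjecture~\ref{conj:main}) is precisely what is closed by the thin-orbit Bourgain--Kontorovich machinery used elsewhere in the paper for the much stronger Theorem~\ref{t:main}. A parallel and likely more elementary route, perhaps closer to the argument credited to Krachun in the footnote, would be to bypass Hensley entirely and exploit the submultiplicativity $\alpha(pq) \leq \alpha(p) + \alpha(q)$: one extracts from step (ii) a small ``seed'' set of integers with $\alpha(\cdot) = O(\log \cdot)$ and iterates multiplicatively, tracking collisions to guarantee power-many distinct products below $T$.
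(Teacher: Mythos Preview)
Your approach is essentially the one the paper takes in \S\ref{sec:Pf1p1}: feed bounded-partial-quotient continued fractions into the graph construction, then use the Hensley/Lalley count $|B_N|=N^{2\delta+o(1)}$ together with the crude pigeonhole $R_N(n)\ll N$ to extract $\gg N^{2\delta-1}$ distinct represented integers.

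There is, however, a genuine gap in your first step. The mechanism of step~(ii), namely the Main Graph Theorem~\ref{thm:graphMain}, does \emph{not} associate to an arbitrary tuple $(a_1,\ldots,a_s)$ a simple planar graph with $\tau(G)=q_s(a_1,\ldots,a_s)$. The operation $\Psi^k$ that would be needed to realize a partial quotient $k\ge2$ creates $k+1$ parallel edges and destroys simplicity (see the remark after Lemma~\ref{lem:rec-g}); only $\Psi^1$, immediately followed by a subdivision $\Phi^{b}$, preserves simplicity. This is precisely why Theorem~\ref{thm:graphMain} requires the alternating form $t/u=[b_1,1,b_2,1,\ldots,b_m,1]$, and why the relevant limit set is $\fC_A$ with dimension $\vt_A$, not the Zaremba set $\fF_A$ with dimension $\de_A$. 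Your appeal to Hensley's $\de_2>1/2$ is therefore inapplicable as stated.

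The fix is immediate and recovers exactly the paper's argument: run your count with the semigroup $\Gamma_A$ of \S\ref{ss:thinOrbits} and the dimension $\vt_A$. The crude bound becomes $|\mathcal N_A\cap[1,N]|\gg N^{2\vt_A-1-o(1)}$, so one needs $\vt_A>1/2$; by Remark~\ref{rmk:A4} this first holds at $A=4$. The paper then sharpens this to $\gg N^{\vt_A-o(1)}$ (valid already for $A=2$) by exploiting an injective sum-set structure inside $\mathcal N_A$: prepending the block $[1,1]$ sends $t/u\in\cR_A$ to $(t+u)/(t+2u)\in\cR_A$, and the pair $(t,t+u)$ determines $(t,u)$. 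Your speculative submultiplicativity route at the end is not what the paper does.
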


The stronger
Theorem~\ref{t:main} is dual to
 the following.

\begin{thm}[{\rm =
Theorem~\ref{t:main}}{}]\label{thm:main2}
Conjecture~\ref{conj:main} holds for a positive proportion of~$\kk$.
That is, there is a set of positive proportion within the natural numbers for which the estimate
\ts $\al(\kk)=O(\log \kk)$ \ts holds
as \ts $\kk\to\infty$ \ts within this set.
\end{thm}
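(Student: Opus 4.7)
The plan is to execute steps (ii)--(iv) of the outline to exhibit, for a positive density set of $\kk\in\nn$, a connected simple planar graph $G$ with $\tau(G)=\kk$ and $|V(G)|=O(\log \kk)$; by definition of $\al$, this will yield $\al(\kk)=O(\log \kk)$ on this set, proving the theorem. By multiplicativity of $\al$ under graph disjoint union (which preserves planarity up to adjusting for connectivity) and the bound \eqref{eq:alOmega}, it suffices to restrict to $\kk$ in a positive density set and produce the desired graphs.

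First I would implement step (ii), the continued fraction mechanism: exhibit a family of simple planar graphs $G_{[a_1,\ldots,a_\ell]}$ (e.g.\ the "book/bracelet" graphs obtained by gluing fans along a common edge, or a similar construction) whose spanning tree count equals the continuant
\[
q(a_1,\ldots,a_\ell) \, := \, \text{denominator of the continued fraction } [a_1;a_2,\ldots,a_\ell],
\]
and whose vertex count is bounded by $C(a_1+a_2+\cdots+a_\ell)$ for an absolute constant $C>0$. Thus if the partial quotients satisfy $a_i\le A$ for some fixed $A$, and the depth $\ell=O(\log \kk)$, then $|V(G_{[a_1,\ldots,a_\ell]})|=O(A\ell)=O(\log \kk)$. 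This reduces the theorem to a purely Diophantine statement: the set
\[
D_A(N) \, := \, \{\ts q(a_1,\ldots,a_\ell)\ts : \ts \ell\le N,\ 1\le a_i\le A\ts\}
\]
should cover a positive proportion of the integers in $[1,e^{cN}]$ for some $c>0$ depending on $A$, since a continuant of depth $\ell$ with entries $\le A$ grows at most like $(A+1)^\ell$.

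Next, for step (iii), I would invoke the density-one / positive-density machinery developed around Zaremba's conjecture: in the style of Bourgain--Kontorovich and its refinements, the denominators of continued fractions with partial quotients in $\{1,\ldots,A\}$ correspond to an orbit of the thin semigroup $\Ga_A\subset\GL_2(\Z)$ acting on $\Z^2$, and one shows via a delta-method circle-method argument (major/minor arc decomposition, with the spectral gap and expander properties of $\Ga_A$ controlling minor arcs, and explicit evaluation on major arcs via the Hausdorff dimension of the Cantor set $E_A\subset[0,1]$) that, when the dimension $\dim_H E_A=:\delta_A$ exceeds a certain explicit threshold $\delta_0$, the set $D_A(N)$ captures a positive proportion of integers up to $e^{cN}$. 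Translating back by the mechanism of step (ii) produces, for each such $\kk$, a planar simple $G$ with $\tau(G)=\kk$ and $|V(G)|\le CA\ell=O(\log \kk)$, which is exactly Theorem~\ref{thm:main2}.

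Finally, step (iv) is a numerical check: one must exhibit some $A$ for which $\delta_A>\delta_0$, with rigorous enclosures of $\delta_A$ obtained via the transfer operator method (Jenkinson--Pollicott and related work). The main obstacle, by far, is step (iii): the circle method here is delicate because one needs positive density rather than merely positive exponent, so the minor arc bounds must beat the trivial bound by a factor with very specific strength, and the major arc analysis requires uniform control over congruence obstructions within the thin orbit. It is precisely here that the "just barely in the hundredths place" luck mentioned in $\S$\ref{ss:intro-meth} is required: the numerical value of $\delta_A$ for the smallest usable $A$ must exceed the analytically derived threshold $\delta_0$, and verifying this rigorously---as opposed to heuristically---is the crux of the proof.
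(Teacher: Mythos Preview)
Your outline follows the paper's four-step strategy, but there is a genuine gap in step~(ii) that cascades through steps~(iii)--(iv). You posit a family of simple planar graphs $G_{[a_1,\ldots,a_\ell]}$ with $\tau(G)$ equal to the \emph{general} continuant $q(a_1,\ldots,a_\ell)$ and $|V|=O(\sum a_i)$, but you do not construct one, and the paper's point is precisely that the \emph{simplicity} constraint obstructs this. The two natural operations on a marked graph $(G,e)$ are subdivision $\Phi^k$ (which preserves simplicity) and adding $k$ parallel edges $\Psi^k$ (which does not); realizing an arbitrary continued fraction $[a_1,a_2,\ldots]$ would require both with arbitrary $k$. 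The paper's workaround is the composite $\Upsilon^k=\Phi^k\circ\Psi^1$, which adds a single parallel edge and immediately subdivides it, restoring simplicity---but this forces every second partial quotient to equal~$1$, so the accessible fractions are only those of the special form $[b_1,1,b_2,1,\ldots,b_m,1]$ (Theorem~\ref{thm:graphMain}).

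This restriction is not cosmetic. The relevant limit set is then $\fC_A$, not your $E_A=\fF_A$; whereas $\delta_A=\dim_H\fF_A\nearrow1$ as $A\to\infty$, here $\vt_A=\dim_H\fC_A\nearrow\vt_\infty<0.799$ is bounded \emph{strictly away from~$1$}. Consequently your step~(iv) is misconceived: the issue is not ``the smallest usable $A$'' but whether $\vt_\infty$ exceeds the threshold $\delta_0$ \emph{at all}. With the best available $\delta_0\approx0.775$ (Theorem~\ref{thm:Kan}) and the rigorous estimate $\vt_{110}>0.775$ (Theorem~\ref{thm:H-dim}), the inequality does hold---but only in the hundredths place, which is exactly the ``extraordinary luck'' the paper flags. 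Had $\vt_\infty$ fallen below $0.775$, the entire method would fail; your proposal gives no indication that this obstacle even exists.
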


Conjecture~\ref{conj:main} remains out of reach. In fact,
even the following weaker problem is open.
%
Denote by \ts $\be(\kk)$ \ts the smallest number of edges of a planar (not necessarily simple)
graph with exactly $\kk$ spanning trees.  This function was introduced
by Nebesk\'y in \cite{Neb}.
Note that multiple edges are
allowed in this case, and that we have \ts $\be(\kk) < 3 \ts \al(\kk)$.
It was proved in \cite{CP-SY}, that \ts $\be(\kk) = O(\log \kk \. \log\log \kk)$,
see~$\S$\ref{ss:finrem-ave}.  This is very close but still shy of the
natural upper bound that would follow from Conjecture~\ref{conj:main}:

\begin{conj} \label{conj:main-beta}
$\be(\kk) = O(\log \kk)$.
\end{conj}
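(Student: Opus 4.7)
The plan is to reduce Conjecture~\ref{conj:main-beta} to a pure Diophantine statement about continued fractions, building on the series-parallel graph construction underlying \S\ref{ss:intro-CF}. The recorded inequality $\beta(\kk) < 3\alpha(\kk)$ means that Conjecture~\ref{conj:main} would suffice; however, allowing parallel edges introduces genuine extra flexibility---most importantly the multiplicativity $\beta(\kk_1 \kk_2) \le \beta(\kk_1) + \beta(\kk_2)$, obtained by identifying a single vertex of two planar multigraphs---and the point of working with $\beta$ rather than $\alpha$ is to exploit this to isolate a tractable core.

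Concretely, I would first use multiplicativity to reduce to the case of prime $\kk$, then invoke the continued-fraction mechanism converting a rational $p/q = [a_1, \ldots, a_n]$ in lowest terms into a planar multigraph with exactly $p$ spanning trees and exactly $\sum_i a_i + O(1)$ edges. Since the denominator growth $q \ge F_{n+1} \gtrsim \varphi^n$ forces $n = O(\log p)$ automatically, the problem becomes the following purely Diophantine statement: every prime $p$ arises as the numerator of some $p/q$ in lowest terms whose partial quotients are bounded by an absolute constant. This is a numerator analogue of Zaremba's conjecture, strictly weaker than Zaremba itself but still open. I would attempt it by transporting the thin-orbit Hausdorff-dimension machinery of \S\ref{ss:intro-Zaremba} from the denominator aspect of $\SL_2(\Zb)$ orbits to the numerator aspect, combined with a local-global congruence analysis in the spirit of the Bourgain--Kontorovich framework, so as to upgrade positive density of admissible targets to the desired ``every prime'' statement.

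The main obstacle is expected to mirror the obstacle in Zaremba's conjecture itself: the thin-orbit methods naturally produce positive density of admissible targets but at present cannot rule out a small exceptional set of primes, and even a single bad prime defeats the multiplicative strategy. A more tractable intermediate approach---one I would pursue in parallel---is to drop the bounded-alphabet restriction entirely and instead allow partial quotients of arbitrary size, demanding only that $\sum_i a_i = O(\log p)$. Here the greater freedom in the unbounded matrix products $\prod_i \mattwos{a_i}{1}{1}{0}$ might permit an averaging argument sufficient to shave the residual $\log\log \kk$ factor off the current best bound $\beta(\kk) = O(\log \kk \log\log \kk)$ of \cite{CP-SY}, yielding Conjecture~\ref{conj:main-beta} without requiring a full numerator-Zaremba resolution.
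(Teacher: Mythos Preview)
This statement is an open \emph{conjecture} in the paper, not a theorem; the paper offers no proof and explicitly flags it as unresolved (see the sentence preceding Conjecture~\ref{conj:main-beta} and the discussion in~\S\ref{ss:finrem-ave} and~\S\ref{ss:finrem-asy}). So there is no ``paper's own proof'' to compare against, and your submission is, appropriately, a research plan rather than a proof.

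Your reduction is correct and already known: the implication ``Zaremba $\Rightarrow$ $\be(\kk)=O(\log \kk)$'' is recorded in~\S\ref{ss:finrem-ave} as having been established in~\cite{CP-SY}, via exactly the continued-fraction $\to$ planar-multigraph mechanism you describe. (One minor slip: you justify $n=O(\log p)$ by citing the growth of the \emph{denominator} $q$; what you actually need is that the \emph{numerator} $p$ equals the continuant $K(a_2,\ldots,a_n)\ge F_{n-1}$, which gives the bound directly.) Your restriction to primes via $\be(\kk_1\kk_2)\le\be(\kk_1)+\be(\kk_2)$ is also fine.

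The genuine gap is exactly the one you name: the orbital circle method yields density-one (or positive-proportion) statements with a power-saving error, but cannot by itself upgrade to ``every sufficiently large integer'' or ``every prime''; the paper makes this limitation explicit in~\S\ref{ss:finrem-asy}, citing the Rickards--Stange obstruction~\cite{RS24}. Your fallback idea of relaxing to unbounded alphabet with $\sum_i a_i = O(\log p)$ is the natural thing to try, but the paper already reports that the best current averaging results (Larcher, Rukavishnikova) leave an unavoidable $\log\log \kk$ factor; removing it would require a new idea beyond what you have sketched. As written, your proposal is a sound summary of where the problem stands, not a proof.
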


\smallskip

\subsection{Continued fractions and graphs} \label{ss:intro-CF}
Given \. $a_0\geq 0$, \. $a_1, \ldots, a_\ell \geq 1 $, where  \ts $\ell \geq 0$,
the corresponding \defnb{continued fraction} \ts is defined as follows:
\[ [a_0\ts ; \ts a_1,\ldots, a_\ell] \ := \ a_0  \. +  \.  \cfrac{1}{a_1  \. +  \. \cfrac{1}{\ddots  \ +  \.  \frac{1}{a_\ell}}}\,.
\]
Integers \ts $a_i$ \ts are called \defn{partial quotients}, see e.g.\ \cite[$\S$10.1]{HardyWright}.
We use the notation \ts $[a_1,\ldots, a_\ell]$ \ts when \ts $a_0=0$.
The following elementary result, proved in \S\ref{s:CF},
gives a connection between spanning trees and
continued fractions:

\begin{figure}
\begin{center}
{\small
\begin{tikzpicture}[
    vertex/.style={circle, fill, inner sep=1.5pt},
    node distance=0.5cm
]
    \node[vertex] (a1) {};
    \node[vertex, below=2cm of a1] (a2) {};
    \draw (a1) -- (a2);

    \node[vertex, right=2cm of a1] (b1) {};
    \node[vertex, below=0.5cm of b1] (b2) {};
    \node[vertex, below=0.5cm of b2] (b3) {};
    \draw[dotted] (b3) -- +(0,-0.5cm);
    \node[vertex, below=.7cm of b3] (b4) {};
    \draw (b1) -- (b2) -- (b3);
    \draw (b3) -- (b4);

    \node[vertex, right=2cm of b1] (c1) {};
    \node[vertex, below=0.5cm of c1] (c2) {};
    \node[vertex, below=0.5cm of c2] (c3) {};
    \draw[dotted] (c3) -- +(0,-0.5cm);
    \node[vertex, below=.7cm of c3] (c4) {};
    \draw (c1) -- (c2) -- (c3);
    \draw (c3) -- (c4);

    \node[vertex, right=2cm of c1] (d1) {};
    \node[vertex, below=0.5cm of d1] (d2) {};
    \node[vertex, below=0.5cm of d2] (d3) {};
    \draw[dotted] (d3) -- +(0,-0.5cm);
    \node[vertex, below=.7cm of d3] (d4) {};
    \draw (d1) -- (d2) -- (d3);
    \draw (d3) -- (d4);

    \draw[dashed] ($(b1)-(0,1.1)$) ellipse (0.6cm and 1.4cm);
    \draw[dashed] ($(c1)+(0,-1.1)$) ellipse (0.6cm and 1.4cm);
    \draw[dashed] ($(d1)+(0,-1.1)$) ellipse (0.6cm and 1.4cm);

    \draw[bend left=30] (a1) to (b1);
    \draw[bend left=30] (a1) to (c1);
    \draw[bend left=30] (a1) to (d1);

    \draw (a2) -- (b4);
    \draw (b1) -- (c4);
    \draw (c1) -- ($(c1) !0.33! (d4)$);
    \draw ($(c1) !0.67! (d4)$) -- (d4);

    \node[right=.5cm of c1] {$\cdots$};
    \node at ($(c1)!0.5!(d4)$) {\(\ddots\)};
    \node[above=.1cm of b4] {$\vdots$};
    \node[above=.1cm of c4] {$\vdots$};
    \node[above=.1cm of d4] {$\vdots$};

    \node[below=0.3cm of b4] {$b_m$};
    \node[below=0.3cm of c4] {$b_{m-1}$};
    \node[below=0.3cm of d4] {$b_2$};
\end{tikzpicture}
}\end{center}
\vskip-.3cm
\caption{A typical graph constructed in the Main Graph Theorem~\ref{thm:graphMain}}
\label{fig:1}
\end{figure}

\begin{thm}[{\rm Main Graph Theorem}{}] \label{thm:graphMain}
	Let \ts $\kk, \jj \geq 1$ \ts  be positive integers with \ts $\kk < \jj$ \ts and  \ts $\gcd(\kk,\jj)=1$.
	Suppose that
\begin{equation}\label{eq:graphMain}
 \frac{\kk}{\jj} \ = \   [b_1,1,b_2,1,\ldots, b_m,1],
 \end{equation}
	for some  
	$b_1,\ldots, b_m\ge 1$.
Then there exists a simple planar graph \ts $G$ such that
\[
\tau(G) \. = \. \kk, \quad \text{and} \quad |V| \. = \.  b_2+\ldots+b_m+2\ts.
\]
\end{thm}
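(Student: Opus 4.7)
The plan is to build the graph $G$ of Figure~\ref{fig:1} explicitly and then compute $\tau(G)$ by a transfer-matrix reduction whose matrices are precisely those appearing in the convergents of the continued fraction $[b_1, 1, b_2, 1, \ldots, b_m, 1]$. For the construction, take two vertices $u$ and $v$ joined by an edge, and for each $i \in \{2, \ldots, m\}$ attach a \emph{block} $B_i$: a path on $b_i$ vertices with endpoints $t_i$ (``top'') and $\beta_i$ (``bottom''), coinciding when $b_i = 1$. Add the edges $u t_i$ for every $i \in \{2, \ldots, m\}$, the edge $v \beta_m$, and the edges $t_i \beta_{i-1}$ for $i \in \{3, \ldots, m\}$. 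A direct count gives $|V(G)| = 2 + \sum_{i=2}^m b_i$, and the depicted embedding (with $u$ above, $v$ just below, the blocks arranged to the right, and the arcs $ut_i$ drawn as nested arches) shows $G$ is planar; no pair of vertices is joined twice and there are no loops, so $G$ is simple. Note that $b_1$ plays no role in the construction, consistent with the (directly verifiable) fact that the numerator $\kk$ of $[b_1, 1, b_2, 1, \ldots, b_m, 1]$ does not depend on $b_1$.

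To compute $\tau(G)$, I would peel off the innermost block $B_2$. Let $C_2$ be the cycle $u - t_3 - \beta_2 - \cdots - t_2 - u$ of length $b_2 + 2$, and set $C_2' := C_2 \setminus \{u t_3\}$, a path of length $b_2 + 1$ from $u$ to $t_3$; let $H'$ be the subgraph of $G$ induced on $V(G) \setminus V(B_2)$. Since $V(C_2') \cap V(H') = \{u, t_3\}$ with no shared edges, $G$ is the parallel composition of $C_2'$ and $H'$ at the terminal pair $(u, t_3)$. Under the relabeling $B_{i+1} \mapsto B_i'$, the subgraph $H'$ is isomorphic to the analogous graph $G'$ built from the shorter tuple $(b_3, \ldots, b_m)$, with the vertex $t_3$ of $G$ playing the role of the innermost ``$t_2$'' of $G'$. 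Alongside $\tau$, I track the auxiliary invariant
\[
\sigma(G) \,:=\, \#\bigl\{\text{spanning 2-forests of } G \text{ with } u \text{ and } t_2 \text{ in different components}\bigr\}.
\]

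The standard parallel-composition formula for spanning trees, using $\tau(C_2') = 1$, the number of $(u, t_3)$-separating 2-forests of $C_2'$ equal to $b_2 + 1$, and the analogous 2-forest count in $H'$ equal to $\sigma(G')$ via the isomorphism, yields $\tau(G) = (b_2+1)\tau(G') + \sigma(G')$. A short case analysis --- classifying a spanning 2-forest of $G$ separating $u, t_2$ by the set of edges of $C_2'$ it omits, and observing that $ut_2$ must be omitted and at most one further edge can be removed before the component count exceeds two --- gives $\sigma(G) = b_2\,\tau(G') + \sigma(G')$. In matrix form,
\[
\begin{pmatrix} \tau(G) \\ \sigma(G) \end{pmatrix} \;=\; M(b_2) \begin{pmatrix} \tau(G') \\ \sigma(G') \end{pmatrix}, \qquad M(b) \,:=\, \begin{pmatrix} b+1 & 1 \\ b & 1 \end{pmatrix}.
\]
Iterating down to the single-cycle base case (where $(\tau, \sigma) = (b_m+2, b_m+1) = M(b_m)(1,1)^T$) yields $(\tau(G), \sigma(G))^T = M(b_2)M(b_3)\cdots M(b_m)(1,1)^T$. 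Writing $M_a := \bigl(\begin{smallmatrix} a & 1 \\ 1 & 0 \end{smallmatrix}\bigr)$ for the standard convergent matrix, the factorization $M(b) = M_1 M_b$ together with the identities $(1,0)M_1 = (1,1)$ and $M_{b_m}(1,1)^T = (M_{b_m}M_1)(1,0)^T$ rewrites the top entry of this product as the bottom-left entry of $M_{b_1} M_1 M_{b_2} M_1 \cdots M_{b_m} M_1$ (the answer being manifestly independent of $b_1$ since the bottom row of $M_{b_1} M_1$ is $(1, 1)$), which is precisely $\kk$. Hence $\tau(G) = \kk$.

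The decisive step in this argument is identifying the auxiliary invariant $\sigma$: without it, $\tau$ alone does not satisfy a linear recursion under peeling off a block; with it, the transfer matrix $M_1 M_{b_2}$ that appears is exactly the building block of the convergent-matrix product for $[b_1, 1, b_2, 1, \ldots, b_m, 1]$. Once this invariant is in hand, the $\sigma$-recursion is an elementary case-check on edge subsets of a path, and the rest of the proof is formal manipulation of $2\times 2$ matrices.
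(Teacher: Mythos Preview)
Your proof is correct. The approach is a close cousin of the paper's, phrased in the language of two-terminal networks rather than marked graphs and deletion--contraction. The paper tracks the vector $\bigl(\tau(G-e),\tau(G/e)\bigr)$ for a marked edge $e=xy$, builds the graph \emph{up} from a single edge by iterating the operation $\Upsilon^k$ (add a parallel edge, then subdivide $k$ times), and shows each $\Upsilon^k$ acts on this vector by right-multiplication by $\bigl(\begin{smallmatrix}0&1\\1&1\end{smallmatrix}\bigr)\bigl(\begin{smallmatrix}0&1\\1&k\end{smallmatrix}\bigr)$; at the end one reads off $\tau(G-e)=t$ and trims the dangling $b_1$-path. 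Your pair $(\tau,\sigma)$ is the same data in disguise, since $\tau(G/e)$ is precisely the number of spanning $2$-forests of $G-e$ separating $x$ from~$y$; you instead construct the already-trimmed graph directly and peel blocks \emph{down} via parallel composition, arriving at the same $2\times 2$ transfer matrix (your $M(b)=M_1M_b$ is conjugate to the paper's $\bigl(\begin{smallmatrix}0&1\\1&1\end{smallmatrix}\bigr)\bigl(\begin{smallmatrix}0&1\\1&b\end{smallmatrix}\bigr)$). What the paper's formulation buys is that the recursion drops out of the one-line identity $\tau(G)=\tau(G-e)+\tau(G/e)$ with no case analysis on which path edges are omitted; what your formulation buys is working from the outset with the target graph of the correct size, so no separate trimming step is needed. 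One small omission: you should remark that the case $m=1$ (where $t=1$ and $G$ is a single edge) is handled trivially outside your recursion.
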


The construction, explained in \S\ref{s:CF}, is completely explicit; see Figure~\ref{fig:1} for the typical structure of such a graph.
The proof is inductive, and is a variation on the proof of Theorem 1.5 in \cite{CP-SY} and a
construction of Bier in \cite{Bier}.
Motivated by this theorem, we formulate the following.

\begin{conj}[{\rm Diophantine Conjecture}{}]\label{conj:Zaremba-strong}
There is a universal constant $A>0$ so that,
for every integer \ts $\kk\ge 3$, there is a coprime integer \ts $\jj > \kk$,
such that
the quotient $\kk/\jj$
has continued fraction expansion
 \eqref{eq:graphMain}
 with 
 \. $b_1,\ldots,b_m\le A$.
%
\end{conj}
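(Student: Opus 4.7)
\medskip

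\noindent
\textbf{Proof proposal.} The plan is to recast the conjecture as a statement about entries of products drawn from a thin semigroup of $\SL_2(\Z)$, and then invoke the toolbox surrounding Zaremba's conjecture: the circle method on thin orbits, spectral gap, and expansion in congruence quotients. Concretely, for each integer $b\ge 1$ let
\[
B_b \ := \ \begin{pmatrix} b & 1 \\ 1 & 0 \end{pmatrix}\begin{pmatrix} 1 & 1 \\ 1 & 0 \end{pmatrix} \ = \ \begin{pmatrix} b+1 & b \\ 1 & 1 \end{pmatrix}.
\]
If $\kk/\jj=[b_1,1,b_2,1,\ldots,b_m,1]$ in lowest terms, then the matrix product $B_{b_1}B_{b_2}\cdots B_{b_m}\in\SL_2(\Z)$ realizes $\kk$ and $\jj$ as specific entries (up to an elementary adjustment). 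Denoting by $\Ga_A$ the multiplicative semigroup generated by $\{B_b\.:\.1\le b\le A\}$, Conjecture~\ref{conj:Zaremba-strong} reduces to the following: for some absolute $A$ and every $\kk\ge 3$, some matrix in $\Ga_A$ carries $\kk$ as its top-left entry.

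Next I would bring in the associated fractal. The semigroup $\Ga_A$ has a limit set $\Om_A\subset[0,1]$ consisting of all irrationals whose continued fraction expansion alternates in the prescribed way with $b_i\le A$. Standard thermodynamic formalism (a Bowen-type pressure equation) supplies the Hausdorff dimension $\de_A:=\dim_H\Om_A$, which is monotone in $A$ and satisfies $\de_A\nearrow 1$ as $A\to\infty$. The exponential growth rate of $|\Ga_A\cap\{\|M\|\le N\}|$ is governed by $\de_A$, so increasing $A$ provides additional room to run counting arguments.

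The third step is to invoke the Bourgain--Kontorovich circle method adapted to $\Ga_A$. Once $\de_A$ exceeds the threshold used in step (iv) of the present paper to establish Theorem~\ref{t:main}, a positive proportion — and, conjecturally, a density-one subset — of the integers up to $N$ appear as top-left entries of some $M\in\Ga_A$ with $\|M\|\le N^{O(1)}$. To upgrade density-one to \emph{every} sufficiently large $\kk$, one would combine: (a) sieving on the residual exceptional set, (b) a local-to-global passage through the congruence quotients $\Ga_A\to\SL_2(\Z/q\Z)$ using Bourgain--Gamburd--Sarnak expansion, and (c) the submultiplicativity $\al(\kk_1\kk_2)\le\al(\kk_1)+\al(\kk_2)$ noted after Conjecture~\ref{conj:main}, which reduces the problem to prime $\kk$.

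The principal obstacle is precisely this final upgrade: producing \emph{every} $\kk$, rather than density-one, amounts to a local-to-global principle for $\Ga_A$ that lies at the frontier of thin-orbit arithmetic. Such a principle is known to imply Zaremba's conjecture, and in the rigid alternating-pattern form demanded by the Main Graph Theorem~\ref{thm:graphMain} it appears at least as hard. A realistic unconditional attack would therefore require either a genuinely new Diophantine input tailored to the pattern $[b_1,1,b_2,1,\ldots]$ — for instance, an identity converting alternating expansions into standard ones while controlling partial quotients — or a substantive advance on the local-to-global conjecture itself. Absent such an input, the proposal above can be expected to deliver only density-one versions of the conjecture, not the uniform version as stated.
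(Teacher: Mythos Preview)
The statement you are addressing is a \emph{conjecture}, not a theorem: the paper does not prove it, and explicitly remarks (see the paragraph after Theorem~\ref{thm:main2} and~\S\ref{ss:finrem-asy}) that it remains out of reach. Your proposal correctly reconstructs the thin-orbit/circle-method framework that the paper uses to establish the \emph{positive-proportion} approximation (Theorem~\ref{thm:BK-strong}), and you are right that the obstruction to the full statement is the absence of a local-global principle for the semigroup~$\Ga_A$.

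There is, however, a concrete error in your second step. You assert that $\de_A\nearrow 1$ as $A\to\infty$. This is false for the alternating-pattern semigroup: the limit set you call~$\Om_A$ is the paper's~$\fC_A$, whose Hausdorff dimension~$\vt_A$ satisfies $\vt_A\nearrow\vt_\infty\le\bar\vt<0.799$ (see~\S\ref{sec:vtInfty} and~\eqref{eq:vtBnd}). Forcing every other partial quotient to equal~$1$ caps the dimension well below~$1$, so ``increasing~$A$ provides additional room'' only up to a hard ceiling near~$0.8$. The argument in the paper succeeds not because the dimension can be made close to~$1$, but because Kan's minor-arcs refinements push the threshold~$\de_0$ down to~$0.775$, which $\vt_\infty$ happens to exceed --- barely, in the hundredths place (Remark~\ref{rmk:vtInfty}). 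Your heuristic would have obscured how delicate this really is.

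On the proposed upgrade: the paper points out in~\S\ref{ss:finrem-asy} that Rickards--Stange have constructed finitely generated subsemigroups of $\SL_2(\Z)$ for which the full local-global principle fails via a Brauer--Manin type obstruction. The orbital circle method cannot distinguish~$\Ga_A$ from such examples, so your steps~(a)--(b) cannot be expected to close the gap without a genuinely new idea. Your step~(c) also conflates two statements: the submultiplicativity $\al(\kk_1\kk_2)\le\al(\kk_1)+\al(\kk_2)$ reduces Conjecture~\ref{conj:main} to primes, but it says nothing about producing the specific continued-fraction pattern~\eqref{eq:graphMain} required by the Diophantine Conjecture itself.
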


We discuss in \S\ref{ss:finrem-asy} why this conjecture may be plausible. Regardless, together with the Main Graph Theorem~\ref{thm:graphMain}, it would settle Conjecture~\ref{conj:main}.

\begin{prop}\label{p:Zaremba-strong}
The Diophantine Conjecture~\ref{conj:Zaremba-strong} implies Conjecture~\ref{conj:main}.
\end{prop}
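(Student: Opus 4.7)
The plan is essentially an assembly argument combining the Diophantine Conjecture~\ref{conj:Zaremba-strong} with the Main Graph Theorem~\ref{thm:graphMain}, with the only nontrivial step being a standard continued fraction growth estimate to bound the number of partial quotients in terms of the numerator.

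First, I would fix an integer $t \ge 3$ and invoke Conjecture~\ref{conj:Zaremba-strong} to obtain an integer $u>t$ with $\gcd(t,u)=1$ and a continued fraction representation
\[
\frac{t}{u} \. = \. [b_1,1,b_2,1,\ldots,b_m,1]
\]
with all partial quotients $b_1,\ldots,b_m$ bounded by the universal constant $A$. With this in hand, Theorem~\ref{thm:graphMain} immediately produces a simple planar graph $G$ with $\tau(G)=t$ and vertex count
\[
|V| \. = \. b_2+\cdots+b_m+2 \. \le \. A\ts(m-1)+2.
\]
By the definition of $\al(\cdot)$ in \eqref{eq:alKdef}, this gives $\al(t) \le A(m-1)+2$, so it remains only to show that $m=O(\log t)$.

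The key estimate is the standard lower bound on continued fraction convergents. Writing $p_k/q_k$ for the convergents of $[0;b_1,1,b_2,1,\ldots,b_m,1]$, so that $t=p_{2m}$ and $u=q_{2m}$, the recurrence $p_k = a_k p_{k-1}+p_{k-2}$ together with the fact that every partial quotient is $\ge 1$ yields by a routine induction
\[
t \. = \. p_{2m} \. \ge \. F_{2m} \. \ge \. \phi^{\ts 2m-2},
\]
where $F_k$ denotes the Fibonacci numbers and $\phi=(1+\sqrt5)/2$. Rearranging gives $m \le \frac{\log t}{2\log\phi}+1$, hence
\[
\al(t) \. \le \. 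A(m-1)+2 \. \le \. \frac{A}{2\log\phi}\,\log t + 2 \. = \. O(\log t),
\]
which is exactly Conjecture~\ref{conj:main}. For the finitely many small $t$ not covered by the asymptotic, the bound is absorbed into the implied constant.

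I do not anticipate a genuine obstacle here: the argument is essentially a bookkeeping exercise, with the only care needed being the verification that the Fibonacci lower bound applies even to the numerator $t$ (not just the denominator $u$), which follows from the continued fraction recurrence under the hypothesis that all partial quotients, including the interspersed $1$'s, are at least $1$. The real content of Conjecture~\ref{conj:main} is therefore entirely absorbed into Conjecture~\ref{conj:Zaremba-strong}; Proposition~\ref{p:Zaremba-strong} is just the formal bridge between them, and serves as motivation for attacking the Diophantine side via the machinery described in \S\ref{ss:intro-Zaremba}.
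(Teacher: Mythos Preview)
Your proof is correct and follows essentially the same approach as the paper: invoke Conjecture~\ref{conj:Zaremba-strong}, apply Theorem~\ref{thm:graphMain} to get a graph with $|V|=O(m)$, and use the standard Fibonacci lower bound to conclude $m=O(\log t)$. The paper's proof is terser, simply asserting $|V|=O(\log t)$ without spelling out the convergent growth estimate that you supply.
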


\begin{proof}
For any integer~$\kk\ge3$, Conjecture~\ref{conj:Zaremba-strong} produces a fraction \ts $
\kk/\jj$ \ts
having
continued
fraction expansion as in
\eqref{eq:graphMain}.  By Theorem~\ref{thm:graphMain}, 
there is a simple planar graph $G$ 
such that
$t=\tau(G)$.
This graph also has
\ts $|V| \le \tfrac{(A+1)}{2} (m-1) + 2 = O(\log \kk)$
vertices; hence $\al(\kk)=O(\log \kk)$,
as required.
\end{proof}

While we are not able to establish Conjecture~\ref{conj:Zaremba-strong}, we prove the following approximation.

\begin{thm}\label{thm:BK-strong}
Conjecture~\ref{conj:Zaremba-strong} holds for a set of positive proportion.
That is,
there is some $A>0$, so that the set of $\kk$ for which the conclusion of Conjecture~\ref{conj:Zaremba-strong} holds, has positive proportion in the natural numbers.
\end{thm}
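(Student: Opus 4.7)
The plan is to mirror the Bourgain--Kontorovich approach to Zaremba's conjecture, adapted to the continued-fraction pattern of \eqref{eq:graphMain}. First, I would encode the admissible fractions matrix-theoretically. Set
\[
M_b \. := \. \begin{pmatrix} b & 1 \\ 1 & 0 \end{pmatrix}\begin{pmatrix} 1 & 1 \\ 1 & 0 \end{pmatrix} \. = \. \begin{pmatrix} b+1 & b \\ 1 & 1 \end{pmatrix}, \qquad 1 \le b \le A,
\]
so that any continued fraction $[b_1,1,b_2,1,\ldots,b_m,1]$ with $1\le b_i\le A$ arises as a ratio of designated entries of the product $M_{b_1}\cdots M_{b_m}$. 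Let $\Gamma_A\subset\SL_2(\Z)$ denote the semigroup generated by $\{M_b\}_{b=1}^A$. The numerators $\kk$ admissible in the Diophantine Conjecture~\ref{conj:Zaremba-strong} are then, up to elementary identifications, the values taken by a prescribed matrix entry on $\Gamma_A$. Theorem~\ref{thm:BK-strong} therefore reduces to showing that, for some sufficiently large $A$, this set of values has positive density in $\nn$.

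I would next deploy the Hardy--Littlewood circle method in the form pioneered by Bourgain--Kontorovich and sharpened by subsequent authors. Define the representation count $r_N(\kk):=\#\{\gamma\in\Gamma_A:\|\gamma\|\le N,\ \text{prescribed entry of }\gamma=\kk\}$ and decompose its Fourier transform $\widehat{r_N}(\theta)$ over $\theta\in[0,1]$ into major and minor arcs. The major-arc contribution is analyzed through the spectral theory of the hyperbolic Laplacian on congruence covers of $\Gamma_A\backslash\hH$ (with the requisite spectral gap supplied by the Bourgain--Gamburd--Sarnak expansion machinery) together with a local Hardy--Littlewood singular-series computation, producing a main term of the expected order. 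The minor-arc contribution is controlled by an $\ell^2$-flattening / bilinear forms estimate, again leveraging expansion in $\Gamma_A\pmod q$. A Cauchy--Schwarz argument comparing the first two moments of $r_N$ then yields $|\{\kk\le N:r_N(\kk)>0\}|\gg N$, which is the desired positive-density statement.

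The main obstacle is the quantitative prerequisite that makes both the major-arc main term dominate and the minor-arc bound go through: the Hausdorff dimension
\[
\delta_A \. := \. \dim_H\bigl\{[b_1,1,b_2,1,\ldots]:\ 1\le b_i\le A\bigr\}
\]
must exceed a specific threshold dictated by the method. Since $\delta_A\to 1$ as $A\to\infty$, such an $A$ exists in principle, but the best currently available sufficiency condition is quite stringent. I would estimate $\delta_A$ rigorously via transfer-operator techniques, characterizing the dimension as the unique $\delta$ at which the leading eigenvalue of the Gauss-map transfer operator restricted to the relevant Cantor set equals $1$, approximating this eigenvalue by piecewise-polynomial Galerkin projections on refined subdivisions, and certifying the output by rigorous interval arithmetic. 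As flagged in step~(iv) of the introduction, the verification succeeds for a concrete $A$ but only barely---in the hundredths place---at which point the theorem follows.
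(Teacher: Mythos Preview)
Your overall architecture matches the paper's: encode the fractions $[b_1,1,b_2,1,\ldots,b_m,1]$ via a finitely generated sub-semigroup $\Gamma_A\subset\SL_2(\Z)$, run the orbital circle method (with the Frolenkov--Kan/Kan minor-arcs refinements) to conclude positive density once the Hausdorff dimension of the limit set exceeds a threshold $\delta_0$, and then verify that threshold numerically via transfer-operator techniques. So the strategy is right.

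There is, however, a real error in the middle of your argument. You assert that ``$\delta_A\to 1$ as $A\to\infty$, so such an $A$ exists in principle.'' This is false for the family at hand. In the classical Zaremba setting the limit set $\fF_A=\{[a_1,a_2,\ldots]:1\le a_i\le A\}$ does fill out $[0,1]$ as $A\to\infty$, so $\dim_H\fF_A\nearrow 1$. But here every \emph{other} partial quotient is pinned to~$1$: the limit set is $\fC_A=\{[b_1,1,b_2,1,\ldots]:1\le b_i\le A\}$, and even the union $\fC_\infty=\bigcup_A\fC_A$ sits inside the proper Cantor set $\bar\fC=\{[b_1,1,b_2,1,\ldots]:b_i\ge 1\}$. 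The paper proves (and Han\v{c}l--Turek had already estimated) that $\dim_H\bar\fC<0.799$. Thus $\vt_A\nearrow\vt_\infty\le\bar\vt<0.799$, and there is no \emph{a priori} guarantee that $\vt_A$ ever exceeds $\delta_0$.

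This is precisely why the numerical step is not a formality: it is a genuine question whether $\vt_\infty$ clears the best available threshold $\delta_0\approx 0.775$, and the paper's Main Dimension Theorem establishes $\vt_{110}>0.775$ with a margin of only about $0.02$. Your closing sentence shows you are aware the verification is tight, but the reasoning you give for \emph{why} it is tight is wrong, and the sentence ``such an $A$ exists in principle'' should be deleted---it does not follow from anything you have said, and without the rigorous dimension estimate the proof is incomplete.
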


We have thus reduced Theorem~\ref{t:main} to proving  Theorems \ref{thm:graphMain} and \ref{thm:BK-strong}.

\begin{proof}[Proof of Theorem~\ref{thm:main2}]
The proof follows verbatim the proof of Proposition~\ref{p:Zaremba-strong},
but applied to a
positive proportion
subset of $\kk$.
\end{proof}


\smallskip

\subsection{Zaremba's conjecture} \label{ss:intro-Zaremba}
%
The Diophantine Conjecture~\ref{conj:Zaremba-strong} is itself closely related to
 the following celebrated open problem (see, e.g., \cite{Kon13} and $\S$\ref{ss:finrem-Zaremba}
 for more background):\footnote{One think of the Diophantine Conjecture~\ref{conj:Zaremba-strong}
 as a version of Zaremba's Conjecture~\ref{conj:Zaremba} for the \emph{negative continued fractions},
see e.g.\ \cite[$\S$4.4]{BPSZ14}. We will not use this connection and omit the details.  }

\begin{conj}[{\rm \defn{Zaremba's Conjecture} \cite[p.~76]{Zar72}}{}] \label{conj:Zaremba}
There is a universal constant $A>0$ so that,
for every integer \ts $\jj\ge 1$, there is a coprime integer \ts $1\le \kk < \jj$,
such that \. $\kk/\jj= [a_1,\ldots,a_\ell]$ \. and \. $a_1,\ldots,a_\ell\le A$.
%
\end{conj}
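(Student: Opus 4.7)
Zaremba's conjecture is itself a celebrated open problem; what follows is therefore a program, mirroring the existing partial-progress strategy via thin orbits and the affine sieve, with the principal obstruction identified honestly at the end.

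The plan is to work with the subsemigroup $\Gamma_A\subset\GL_2(\Z)$ generated by $\gamma_a:=\mattwos{a}{1}{1}{0}$ for $1\le a\le A$, so that each product $\gamma_{a_1}\cdots\gamma_{a_\ell}$ packages a continued fraction $[a_1,\ldots,a_\ell]=\kk/\jj$ with $\jj$ recoverable from a specified matrix entry. First, I would study the Poincar\'e series of $\Gamma_A$ via the Ruelle transfer operator for the Gauss map restricted to the Cantor set $\Kc_A\subset[0,1]$ of continued fractions whose partial quotients are at most $A$; its Hausdorff dimension $\delta_A\to 1$ as $A\to\infty$ supplies the Archimedean input. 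Second, I would run the circle method on the counting function
\[
R_N(\jj) \ := \ \#\bigl\{\gamma\in\Gamma_A\,:\,\|\gamma\|\le N,\ \gamma\text{ has denominator }\jj\bigr\}.
\]
Major arcs would give a main term of size $\asymp N^{2\delta_A-1}/\jj$, while minor arcs would acquire square-root cancellation from super-strong approximation, i.e.\ expansion of $\Gamma_A\bmod q$. This is the Bourgain--Kontorovich mechanism underlying Theorem~\ref{thm:BK-strong}, and with Huang's refinement it yields $R_N(\jj)>0$ for all but a density-zero exceptional set $\cE_A\subset\{\jj\le N^{2\delta_A-1}\}$.

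To pass from density one to \emph{every} $\jj$, I would attempt two complementary routes: $(i)$ enlarge $A$ and try to prove the monotonicity $\cE_{A+1}\subsetneq\cE_A$, so that $\bigcap_A\cE_A=\varnothing$; or $(ii)$ exploit multiplicativity by gluing admissible expansions of coprime $\jj_1,\jj_2\notin\cE_A$ into one for $\jj_1\jj_2$ with a slightly larger alphabet, then apply Bombieri--Vinogradov-type amplification to eliminate the residual set. The main obstacle --- and the reason Zaremba's conjecture remains open despite enormous progress --- is precisely that neither route is currently feasible: the circle method is saturated at density one and cannot detect a single missing denominator, while the gluing in $(ii)$ fails unless the tails of the two fractions satisfy delicate compatibility conditions that are not guaranteed on average. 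A serious attempt would therefore need either a new Archimedean input finer than Hausdorff dimension, or a constructive mechanism --- perhaps analogous to the graph-theoretic recursion of Theorem~\ref{thm:graphMain} combined with a Chinese-remainder synthesis of admissible expansions --- to address each $\jj$ individually.
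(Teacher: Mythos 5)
This statement is Zaremba's Conjecture itself, which is an open problem; the paper does not prove it and does not claim to. It is stated only as background, and the paper uses (by citation) the partial results of Bourgain--Kontorovich, Huang, Frolenkov and Kan --- Theorems~\ref{t:BK}, \ref{t:BK'}, \ref{t:BK1} and \ref{thm:Kan} --- which give the conclusion for a positive proportion (indeed density one) of denominators $\jj$, not for every $\jj$. So your proposal cannot be ``compared against the paper's proof'': there is none, and your own text concedes that what you have written is a program rather than a proof. As a proof attempt it therefore has a genuine gap --- the gap --- at exactly the point you flag: passing from a density-one set of admissible $\jj$ to all $\jj$.

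Concretely, neither of your two routes can close this. Route~$(i)$ is insufficient even granting the (unproved) strict monotonicity $\cE_{A+1}\subsetneq\cE_A$: a nested decreasing family of density-zero sets can perfectly well have nonempty intersection, and the circle method gives no effective control of individual elements of $\cE_A$; worse, as the paper explains in \S\ref{ss:finrem-asy}, Rickards--Stange \cite{RS24} exhibit semigroup orbits in $\SL_2(\Z)$ that are analytically indistinguishable from this one but for which the full local-global statement is \emph{false} (a reciprocity-type obstruction), so no refinement of the orbital circle method alone, for any fixed $A$, can detect every missing denominator. Route~$(ii)$ fails at the gluing step: bounded-partial-quotient expansions of coprime $\jj_1$ and $\jj_2$ give no handle on an expansion with denominator $\jj_1\jj_2$ --- continuants do not multiply, and there is no Chinese-remainder-type synthesis of continued fractions. (Contrast this with the paper's numerator problem, where the graph-theoretic model does give submultiplicativity, $\al(pq)\le\al(p)+\al(q)$, via disjoint unions of graphs; no analogous operation is known on the Zaremba side.) Two smaller technical inaccuracies: in the Bourgain--Kontorovich analysis the expected main term for $R_N(n)$ with $n\asymp N$ is of size $|B_N|/N\asymp N^{2\de_A-1}$ times a singular series (not an extra factor $1/\jj$), and the expansion/super-strong approximation input enters the \emph{major}-arc analysis, while the minor arcs are handled by bilinear-form and exponential-sum estimates, which is precisely where the dimension threshold $\de_0$ arises.
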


Here the change to denominators from numerators in the Diophantine
Conjecture~\ref{conj:Zaremba-strong} is completely anodyne, see Remark~\ref{rmk:NumDenom}.
Zaremba himself conjectured that one can take $A=5$ in Conjecture~\ref{conj:Zaremba},
see also~$\S$\ref{ss:finrem-Zaremba}.  In \cite{BK14}, Bourgain--Kontorovich proved
the following result, on which the proof of Theorem~\ref{thm:BK-strong} is based,
and which shows that Zaremba's conjecture holds for a positive proportion of
denominators (in fact, they prove a density one version, but again,
we will only quote the weaker statement):

\begin{thm}[{\rm \cite{BK14}}{}]\label{t:BK}
There is a universal constant $A>0$ so that the conclusion of Zarembra's Conjecture~\ref{conj:Zaremba} holds for a positive proportion of $\jj$.
\end{thm}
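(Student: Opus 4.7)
The plan is to follow the thin-orbit circle method of Bourgain--Kontorovich. The strategy begins by re-encoding the continued fraction condition as a question about matrix products. For $A \geq 2$, let $G_A \subset \mathrm{SL}_2(\mathbb{Z})$ denote the free semigroup generated by the matrices $M_a = \bigl(\begin{smallmatrix} a & 1 \\ 1 & 0 \end{smallmatrix}\bigr)$ with $1 \le a \le A$. A product $\gamma = M_{a_1} M_{a_2} \cdots M_{a_\ell}$ encodes the continued fraction $[a_1,\ldots,a_\ell]$ in the sense that its numerator $p$ and denominator $q$ appear explicitly as matrix entries of $\gamma$. Consequently, the set of denominators $\jj$ fulfilling Zaremba's conjecture with partial quotients bounded by $A$ contains $\mathcal{D}_A := \{q(\gamma) : \gamma \in G_A\}$, and it suffices to show $|\mathcal{D}_A \cap [1,N]| \gg N$ for $N$ large and $A$ a sufficiently large fixed constant.

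The heart of the argument is a Hardy--Littlewood circle method analysis of the exponential sum
\[
S_N(\theta) \ := \ \sum_{\gamma \in G_A,\ q(\gamma) \sim N} e^{2\pi i \theta\, q(\gamma)}.
\]
By Cauchy--Schwarz, a positive-proportion lower bound on $|\mathcal{D}_A \cap [1,N]|$ follows from the $L^2$ comparison $\int_0^1 |S_N(\theta)|^2\, d\theta \ll N^{-1} \bigl(\int_0^1 S_N(\theta)\, d\theta\bigr)^2$. Renewal theory for the transfer operator associated to the Gauss map on the Cantor-like set $\Xi_A \subset [0,1]$ of reals with partial quotients $\le A$ yields $S_N(0) \asymp N^{2\delta_A}$, where $\delta_A$ is the Hausdorff dimension of $\Xi_A$. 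One partitions the circle into major and minor arcs: on the major arcs $\theta \approx a/q$ with small denominator, strong/super-strong approximation for the thin group $G_A$, coupled with the Bourgain--Gamburd spectral gap for Cayley graphs of $\mathrm{SL}_2(\mathbb{F}_q)$, shows that $G_A$ equidistributes modulo $q$ with uniform exponential decay, producing the main term of the expected size. On the minor arcs, one exploits that $\delta_A \to 1$ as $A\to \infty$ together with sum--product estimates in $\mathrm{SL}_2(\mathbb{F}_p)$ to obtain a power-saving bound $|S_N(\theta)| \ll N^{2\delta_A - \eta}$ for some $\eta > 0$.

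The main obstacle is the minor-arc bound: unlike the case of the full modular group, $G_A$ has no automorphic or Kuznetsov-type spectral decomposition available, so cancellation must be extracted combinatorially. The technique is an ``ensemble'' decomposition, writing a long word $\gamma = \gamma_1 \gamma_2 \cdots \gamma_k$ via randomized cut-points and applying a Cauchy--Schwarz / van der Corput reduction to bilinear sums; the resulting bilinear form is then estimated through Bourgain--Glibichuk--Konyagin sum--product phenomena seeded by an initial non-trivial bound. The whole argument is quantitatively threshold-sensitive, and forcing it through requires $\delta_A$ to exceed an explicit critical value; choosing $A$ large enough, the required dimension bound is guaranteed by Jenkinson--Pollicott style numerical estimates on $\delta_A$. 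Combining the major and minor arc estimates then yields $|\mathcal{D}_A \cap [1,N]| \gg N$, completing the proof.
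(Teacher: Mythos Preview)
Your proposal correctly identifies the Bourgain--Kontorovich thin-orbit circle method and matches the paper's own sketch (the paper does not reprove this theorem, only outlines the mechanism in \S\ref{ss:BK-orbital}). A few inaccuracies are worth flagging. First, the $L^2$ comparison should read $\int_0^1 |S_N(\theta)|^2\,d\theta \ll N^{-1} S_N(0)^2$, not $N^{-1}\bigl(\int_0^1 S_N\bigr)^2$; the latter integral is $R_N(0)$, not the total mass. Second, as the paper notes explicitly, for the \emph{positive-proportion} statement one can dispense with the major-arc analysis entirely; the Bourgain--Gamburd expander input and the singular-series main term are needed only for the density-one upgrade. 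Third, you have the roles slightly crossed: the sum--product/expansion machinery in $\mathrm{SL}_2(\mathbb{F}_p)$ feeds into the major arcs (equidistribution mod $q$), whereas the minor-arc cancellation in \cite{BK14} comes from the multilinear ensemble decomposition plus elementary bilinear/large-sieve estimates, not sum--product directly. Finally, for the original Zaremba setting no Jenkinson--Pollicott numerics are needed: $\delta_A \to 1$ as $A\to\infty$ is classical (Hensley), so any $A$ with $\delta_A>\delta_0$ suffices. None of these points is fatal; the overall architecture you describe is the right one.
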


A key role is played here by the Cantor-like sets comprising the limit points of these Diophantine fractions.
In the setting of Zaremba's conjecture, the limit set, for a given $A>0$, is the following:
$$
\fF_A \, := \, \big\{ [a_1,a_2,\ldots] \ : \ 1\le a_i \le A, \ \ \forall \. i  \.\big\}.
$$
Let  \ts $\de_A=\operatorname{Hdim}(\fF_A)$ \ts denote the Hausdorff dimension of $\fF_A$; it is classical that $\de_A\nearrow 1$ as $A\to\infty$.
Then the more precise version of Theorem~\ref{t:BK} is the following.
\begin{thm}[{\rm \cite{BK14}}{}]\label{t:BK'}
There is a constant $\de_0<1$, such that, for any $A$ with $\de_A>\de_0$,  Zaremba's Conjecture~\ref{conj:Zaremba} holds for a positive proportion of $\jj$.
\end{thm}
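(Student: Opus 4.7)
The plan is to implement the Hardy--Littlewood circle method in the thin-orbits setting. First, encode the Diophantine condition via matrix products: by the Euclidean algorithm, $\kk/\jj = [a_1,\ldots,a_\ell]$ is recovered from
\[
\begin{pmatrix} * & \kk \\ * & \jj \end{pmatrix} \ = \ \prod_{i=1}^\ell \begin{pmatrix} a_i & 1 \\ 1 & 0 \end{pmatrix},
\]
so that the denominators $\jj$ appearing in Zaremba fractions with partial quotients in $\{1,\ldots,A\}$ are precisely the $(2,2)$-entries of elements of the free semigroup $\G_A \ssu \GL_2(\zz)$ generated by these elementary matrices. The limit set of $\G_A$ acting on $\partial \hH^2$ is naturally identified with $\fF_A$, of Hausdorff dimension $\de_A$.

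Second, introduce the representation function $r_N(\jj) := \#\{M \in \G_A : \jj_M = \jj, \ \|M\| \asymp N\}$, smoothed by a suitable bump. By Cauchy--Schwarz,
\[
\#\big\{\jj \le N : r_N(\jj) > 0\big\} \ \ge \ \frac{\big(\sum_\jj r_N(\jj)\big)^2}{\sum_\jj r_N(\jj)^2}.
\]
The numerator is a classical orbit count of order $N^{2\de_A}$, via Lalley's renewal theorem for conformal iterated function systems, or Patterson--Sullivan theory. Hence it suffices to establish the second-moment bound $\sum_\jj r_N(\jj)^2 \ll N^{4\de_A - 1}$, which would yield $\gg N$ distinct denominators in $[1,N]$, i.e., the desired positive proportion.

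Third, realize the second moment as an $L^2$-norm of an exponential sum, $\sum_\jj r_N(\jj)^2 = \int_0^1 |S_N(\theta)|^2\, d\theta$, where $S_N(\theta) := \sum_M w_N(M)\, e(\theta \jj_M)$. Decompose $[0,1]$ into major arcs (near rationals $a/q$ with $q$ small) and minor arcs. On major arcs, use super-strong approximation for $\G_A$ --- i.e., equidistribution of $\G_A \bmod q$ in $\SL_2(\zz/q\zz)$ via the Bourgain--Gamburd--Sarnak expander machinery for thin groups --- to extract the main term of the expected order. On minor arcs, the crux is a nontrivial bound on $|S_N(\theta)|$, obtained by combining a transfer-operator / renewal analysis of $\G_A$-sums with an $\ell^2$-flattening / additive-combinatorial argument that exploits cancellation along the orbit. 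It is precisely here that the hypothesis $\de_A > \de_0$ enters: only above this threshold do the spectral and dimensional inputs suffice to beat the diagonal contribution and secure the required saving.

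The main obstacle is, unsurprisingly, the minor-arc estimate; it demands both a uniform spectral gap for congruence quotients of the thin semigroup $\G_A$ and sharp cancellation in exponential sums twisted by the bottom-row entries of $\G_A$. Once these are in hand, the Cauchy--Schwarz bound combines with the major-arc main term to deliver $\#\{\jj \le N : \jj \text{ is a Zaremba denominator}\} \gg N$, completing the proof of Theorem~\ref{t:BK'}.
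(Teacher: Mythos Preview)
Your outline is essentially the orbital circle method of Bourgain--Kontorovich, which is exactly what the paper sketches in \S\ref{ss:BK-orbital} (the theorem itself is cited from \cite{BK14}, not proved in full here). One small sharpening: for the \emph{positive proportion} statement of Theorem~\ref{t:BK'}, the paper notes that one may dispense with the major-arc analysis entirely and work directly with the $L^2$ bound on $S_N$; your invocation of super-strong approximation on major arcs is needed only for the stronger density-one version.
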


We explain in \S\ref{s:BK} how the same methods can be adapted to prove an analogous theorem in the context of \S\ref{ss:intro-CF}. For $A>0$, define the limit set:
$$
\fC_A \, := \, \big\{ [b_1,1,b_2,1,\ldots] \ : \ 
1\le b_i \le A, \ \ \forall \. i   \.\big\},
$$
and let
$$
\vt_A := \operatorname{Hdim}(\fC_A)
$$
be its Hausdorff dimension.
\begin{thm}[{\rm Main Diophantine Theorem}{}]\label{thm:BK-dim-strong}
There is a constant $\de_0<1$, such that, for any $A>0$ with $\vt_A>\de_0$,
the Diophantine Conjecture~\ref{conj:Zaremba-strong} holds for a positive proportion of $\kk$.
\end{thm}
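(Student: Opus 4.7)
The strategy is to adapt the circle-method proof of Theorem~\ref{t:BK'} to the constrained continued-fraction shape appearing in \eqref{eq:graphMain}.

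First, I would translate the problem into counting orbits of a thin semigroup. Recall the standard encoding of a continued fraction $[c_1,\ldots,c_\ell]$ by the matrix product $M_{c_1}\cdots M_{c_\ell}$, where $M_c=\bigl(\begin{smallmatrix} c & 1\\ 1 & 0\end{smallmatrix}\bigr)$, so that numerator and denominator of the fraction are recovered from the first column of the product. For our specific pattern $[b_1,1,b_2,1,\ldots,b_m,1]$ with $1\le b_i\le A$, the corresponding matrices lie in the semigroup $\Ga_A$ generated by the finite alphabet $\cS_A := \{M_b M_1 : 1\le b\le A\} = \bigl\{\bigl(\begin{smallmatrix} b+1 & b\\ 1 & 1\end{smallmatrix}\bigr) : 1\le b\le A\bigr\}$. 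The limit set of $\Ga_A$ acting on $\R$ is precisely $\fC_A$, with Hausdorff dimension $\vt_A$. Thus the Diophantine Conjecture~\ref{conj:Zaremba-strong} becomes the assertion that a positive proportion of positive integers $\kk$ appear as the $(2,1)$-entry of some $\gamma\in\Ga_A$ coprime to the corresponding $(1,1)$-entry (the numerator/denominator swap from Theorem~\ref{t:BK'} is harmless by Remark~\ref{rmk:NumDenom}).

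Next, I would run the Bourgain--Kontorovich circle method on the representation function $R_N(n) := \#\{\gamma\in\Ga_A : \|\gamma\|\asymp N,\ \gamma_{2,1}=n\}$. The goal is to show $\#\{n\le cN : R_N(n)\ge 1\}\gg N$ for some $c>0$; by Cauchy--Schwarz this reduces to matching bounds on the first and second moments of $R_N$. The first moment is of size $\asymp N^{2\vt_A}$ by Lalley--Bowen--Series orbit counting for conformal expanding dynamical systems. For the second moment, one writes $\sum_n R_N(n)^2$ as an exponential sum and splits the dual torus into major and minor arcs. On major arcs, the main contribution arises from the spectral theory of $\Ga_A\backslash\SL_2(\R)$ together with the Bourgain--Gamburd local-to-global congruence expansion; this requires $\Ga_A$ to be Zariski-dense in $\SL_2$ and to surject onto $\SL_2(\ff_p)$ for all but finitely many primes, both of which follow at once from the explicit form of $\cS_A$. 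On minor arcs, the bilinear forms estimate and ``positivity trick'' from \cite{BK14} supply the needed decay whenever $\vt_A$ exceeds a universal threshold $\de_0<1$; this is precisely where the hypothesis enters.

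The main obstacle I expect is the bookkeeping required to verify that every step of \cite{BK14} --- the ensemble construction, the admissible modulus choice, and the bilinear sum-product decoupling --- survives the replacement of the plain generators $\{M_a\}$ by the compound generators $\{M_bM_1\}$. Structurally nothing new arises: the matrices in $\cS_A$ are hyperbolic with distinct attracting/repelling fixed points, generate a free Zariski-dense semigroup for $A\ge 2$, and satisfy strong approximation; the effect on the proof is merely to shift various exponents by bounded multiplicative constants and to double the word length at the combinatorial level, ultimately yielding the same conclusion with $\de_A$ replaced by $\vt_A$. The remaining question --- whether the numerical inequality $\vt_A>\de_0$ actually holds for some explicit $A$ --- is then deferred to ingredient (iv) of the introduction, where the Hausdorff dimension of $\fC_A$ is computed rigorously and shown to just barely clear the threshold.
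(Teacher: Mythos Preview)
Your proposal is correct and follows essentially the same route as the paper: recast the problem as a thin-orbit question for the semigroup generated by the ``block'' matrices corresponding to the pattern $[b,1]$, verify the local (strong approximation / admissibility) hypotheses, and invoke the Bourgain--Kontorovich circle-method machinery, whose minor-arcs analysis only requires the limit-set dimension to exceed a universal $\de_0<1$.

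The only presentational difference is that the paper packages the circle method as a black-box statement (Theorem~\ref{t:BK1}) valid for any finitely generated $\G\subseteq\G_A^{(0)}$, and then simply checks admissibility via Lemma~\ref{lem:GAmodq}, which in fact shows $\G_A$ surjects onto $\SL(2,\Z/q\Z)$ for \emph{every} $q\ge2$ (not just all but finitely many primes), so that \emph{all} integers are admissible. Your weaker strong-approximation claim would already suffice for the positive-proportion conclusion, but the full surjectivity is both easy (two explicit generators suffice) and yields the cleaner statement that the admissible set is all of~$\nn$.
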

Let
\begin{equation}\label{eq:fCinfty}
\fC_\infty=\bigcup_{A=1}^\infty \fC_A,
\end{equation}
and $\vt_\infty:=\operatorname{Hdim}\fC_\infty$, so that $\vt_A\nearrow \vt_\infty$ as $A\to\infty$.
This Cantor set
 is closely related to the fractal
\begin{equation}\label{eq:barfCDef}
\bar\fC : =  \big\{ [b_1,1,b_2,1,\ldots] \ : \  b_i \ge1, \ \ \forall \. i  \.\big\},
\end{equation}
see Figure \ref{fig:Cinf}.
In particular, the former is the intersection of the latter with the set of badly approximable numbers (that is, those with bounded partial quotients).
Hence the Hausdorff dimension
$$
\bar\vt :=\operatorname{Hdim}(\bar\fC)
$$
is an upper bound for $\vt_\infty$.

Curiously, fractal \ts $\bar\fC$ \ts appeared recently in work of Han\v{c}l--Turek \cite{HanclTurek2023}, where they showed, among other things,
an analogue of Hall's famous \emph{sum-set projection theorem} \cite{Hall1947}, namely that every $x\in[0,1]$ can be written as the sum of two elements from $\bar\fC$. They also estimated its Hausdorff dimension, proving that
\begin{equation}\label{eq:barvtEst}
0.732 \ < \ \bar\vt \ < \ 0.819.
\end{equation}

\begin{figure}
\includegraphics[width=2in]{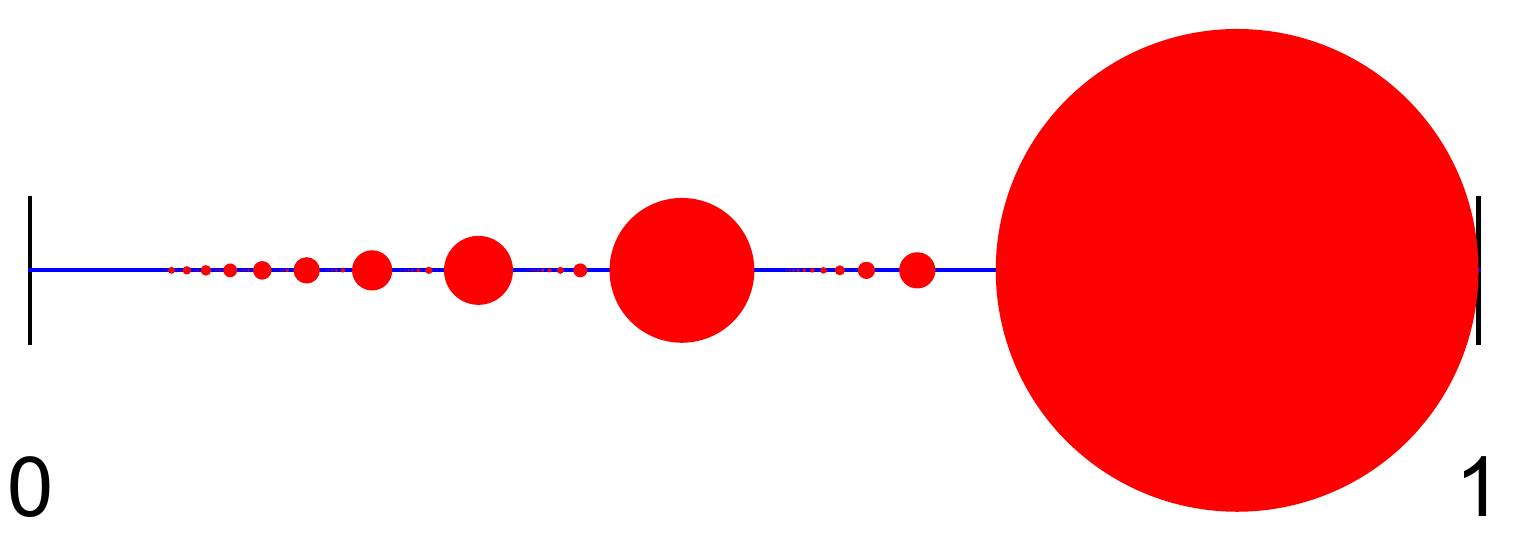}
\caption{The fractal $\bar\fC$ defined in \eqref{eq:barfCDef}. Diameters of circles along the unit interval correspond to ranges being removed at each stage.}
\label{fig:Cinf}
\end{figure}

In order to conclude Theorem~\ref{thm:BK-strong}, we thus need to verify that the dimension $\vt_\infty$ indeed exceeds the numerical value of $\de_0$ coming from the proof of Theorem~\ref{thm:BK-dim-strong}.
The original work \cite{BK14} gave the value of $\de_0=0.984$, and there have been a number of subsequent improvements bringing down this critical parameter.
As explained in Section~\ref{s:BK}, the proof of (a density-one version of) Theorem~\ref{t:BK'} follows the ``orbital circle method'' which relies on an analysis of a ``minor arcs'' estimate and ``major arcs'' estimate.

In \cite{FK14}, Frolenkov--Kan gave an improved treatment of the minor arcs  only, allowing them to reduce $\de_0$ to $5/6=0.8\bar3$, but at the cost of concluding that Zaremba's conjecture holds for only a positive proportion of $\kk$, rather than a density one set.
 In \cite{Hua15}, Huang was able to  combine the major arcs analysis from \cite{BK14} with the minor arcs treatment in \cite{FK14} to recover $\de_0=5/6$ for a density one set. Since then, there have been a number of improvements on the minor arcs analysis by Kan, who proved the positive proportion version with:
\begin{itemize}
\item $\de_0\, =\, 4/5\, =\, 0.8$ in \cite{Kan15},
\item $\de_0\, =\,  \frac14(\sqrt{17}-1)\,  \approx\, 0.781$ in \cite{Kan17}, and most recently
\item $\de_0 \, =\,  \frac13(\sqrt{40}-4)\,  \approx\,  0.775$, see \cite{Kan21}.
\end{itemize}

\smallskip

\begin{thm}[{\rm cf.~\cite{Kan21}}{}]\label{thm:Kan}
Theorem~\ref{thm:BK-dim-strong} holds with $\de_0=0.775$.
\end{thm}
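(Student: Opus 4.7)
The plan is to adapt the orbital circle method of Bourgain--Kontorovich \cite{BK14} together with the minor-arc refinement of Kan \cite{Kan21}, transported from the standard Zaremba Cantor set $\fF_A$ to the alternating Cantor set $\fC_A$. The arguments in \cite{BK14, Hua15, Kan21} are structured so that the Hausdorff dimension of the underlying limit set is the only numerical parameter determining the threshold, while the rest of the machinery is structural. Once the correct semigroup is identified, one simply re-runs Kan's proof with $\de_A$ replaced everywhere by $\vt_A$.

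The first step is to identify the right semigroup. The continued fraction $[b_1, 1, b_2, 1, \ldots, b_m, 1]$ corresponds, in the standard $\SL_2(\Z)$ encoding, to a product of the compound matrices
$$
M_b \, := \, \begin{pmatrix} b & 1 \\ 1 & 0 \end{pmatrix} \begin{pmatrix} 1 & 1 \\ 1 & 0 \end{pmatrix} \, = \, \begin{pmatrix} b+1 & b \\ 1 & 1 \end{pmatrix}, \qquad 1 \le b \le A,
$$
and these generate a finitely generated, Zariski dense subsemigroup $\G_A \subset \SL_2(\Z)$ whose limit set on the boundary of $\hH^2$ is precisely $\fC_A$. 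The numerators $\kk$ appearing in \eqref{eq:graphMain} are exactly the designated matrix entries of words in $\G_A$, so the desired positive-proportion statement for $\kk$ becomes a positive-proportion statement for the $\G_A$-orbit on $\Z$.

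Second, form the exponential sum $S_N(\theta) = \sum_\kk a_N(\kk) \. e(\kk\theta)$, where $a_N(\kk)$ counts $\ga \in \G_A$ with $\|\ga\| \asymp N$ having $\kk$ as the chosen entry, and decompose $[0,1)$ into major and minor arcs. The major-arc analysis from \cite{BK14}, simplified by Huang \cite{Hua15}, depends only on $\G_A$ being Zariski dense with limit-set dimension sufficiently large, on uniform spectral gap for $\G_A$ modulo primes (Bourgain--Gamburd), and on renewal asymptotics for the geodesic flow on $\G_A \backslash \hH^2$; each of these ingredients transfers to $\G_A$ without modification. The minor-arc $L^\infty$ bound for $S_N$ is where the threshold is pinned down, and here one substitutes Kan's bound from \cite{Kan21}, whose hypothesis is exactly that the limit-set dimension exceed $\tfrac13(\sqrt{40}-4) \approx 0.775$.

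The main obstacle, and the only nontrivial verification, is to confirm that the combinatorial splitting lemmas underlying Kan's minor-arc estimate transfer to the alternating alphabet. The elementary identity $[\ldots,a,1,b,\ldots] = [\ldots,a,b+1,\ldots]$ shows that words of the form $[b_1, 1, b_2, 1, \ldots, b_m, 1]$ with $b_i \le A$ embed, after collapse, into ordinary continued fractions with partial quotients in $\{2, 3, \ldots, A+1\}$, and the associated continuants, products of partial quotients, and \emph{ensemble} decompositions match up to constants depending only on~$A$. Granting this routine bookkeeping, the continuant inequalities and counting bounds used in \cite{Kan21} apply line by line, and combining the major- and minor-arc contributions in the standard way yields Theorem~\ref{thm:BK-dim-strong} with $\de_0 = 0.775$.
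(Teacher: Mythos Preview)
Your overall strategy matches the paper's: neither you nor the paper gives a self-contained proof, and both reduce to the observation that Kan's minor-arc analysis in \cite{Kan21} applies to the semigroup $\G_A$ once one knows it sits inside the standard Zaremba semigroup $\G_A^{(0)}$ (the paper frames Theorem~\ref{t:BK1} for an arbitrary finitely generated $\G\subseteq\G_A^{(0)}$ and then specializes). So in spirit you are on the right track.

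However, your final paragraph contains a genuine error. The identity $[\ldots,a,1,b,\ldots]=[\ldots,a,b+1,\ldots]$ is false: for the tail, $[1,b,c_1,\ldots]=(b+x)/(b+x+1)$ with $x=[c_1,\ldots]$, whereas $[b+1,c_1,\ldots]=1/(b+1+x)$, and these disagree. (You may be thinking of the terminal identity $[\ldots,a_n,1]=[\ldots,a_n+1]$, which does not extend to interior positions.) Consequently your claimed embedding of $[b_1,1,b_2,1,\ldots]$ into ordinary continued fractions with alphabet $\{2,\ldots,A+1\}$ does not work, and your justification for transferring Kan's continuant lemmas collapses.

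Fortunately this detour is unnecessary. The generators $\begin{psmallmatrix}0&1\\1&1\end{psmallmatrix}\begin{psmallmatrix}0&1\\1&b\end{psmallmatrix}$ with $1\le b\le A$ are already even-length words in the Zaremba generators $\begin{psmallmatrix}0&1\\1&a\end{psmallmatrix}$, $1\le a\le A$, so $\G_A\subset\G_A^{(0)}$ directly. Kan's argument is written for exactly this ambient semigroup and uses only the dimension of the limit set of the chosen subsemigroup as the numerical threshold; no collapse or re-encoding is needed. Replace your last paragraph with this inclusion and the proof sketch is correct and essentially identical to the paper's.
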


Unfortunately, the estimate \eqref{eq:barvtEst} is completely inconclusive! It neither shows that $\vt_\infty$ is too small to hope for it to exceed $\delta_0$, nor does it guarantee that we are in the range of exceeding $\delta_0$ and hence of being able to apply Theorem~\ref{thm:Kan}.
It thus remains to exhibit rigorous estimates for $\vt_\infty$ that settle the question. 

\begin{rem}\label{rmk:Huang}
Combined with Huang's method, it is possible to prove that density one sets of~$\kk$ satisfy Zaremba's conjecture, with these same parameters of $\de_0$ in Theorem~\ref{thm:BK-dim-strong}.
While doing this properly with all details would multiply the length of this paper five-fold, we discuss the key steps in \S\ref{ss:BK-orbital}; executing this would make Theorems \ref{thm:BK-dim-strong}, \ref{thm:BK-strong}, \ref{thm:main2}, and \ref{t:main} hold with ``positive proportion'' replaced by ``density one,'' but would not affect the Main Theorem~\ref{t:main-exp}.
\end{rem}

\smallskip

\subsection{Computing Hausdorff dimensions}\label{ss:intro-H-dim}
The numerical estimation of Hausdorff dimensions of fractals corresponding to various families of
continued fractions is a major object of study in dynamical systems.  Specifically,
estimating \ts $\de_A$ \ts has become a benchmark problem in the area.

Famously, Good \cite{Good41} showed that \ts $0.5194< \de_2 <0.5433$, greatly
improving upon Jarn\'{\i}k's 1928 lower bound \ts $\de_2 \ge 0.25\ts$.
Bumby \cite{Bum85} used Good's approach and computer assistance
to calculate \ts $0.5312< \de_2 <0.5314$.  Although the exact value is not known,
further algorithmic and computational advances led to better estimates,
see  \cite[\href{https://oeis.org/A279903}{A279903}]{OEIS}.
Notably, Hensley \cite{Hen96} improved the bounds
to $\de_2 \approx 0.53128051$, and eventually this value was computed
to 25 digits \cite{JP01}, 54 digits \cite{Jen04}, 100 digits \cite{JP18},
and most recently to 200 digits \cite{PV22}.

Implementing the Pollicott--Vytnova algorithm  \cite{PV22} in our setting, we are able to greatly improve on \eqref{eq:barvtEst} and show the following.

\begin{thm}[{\rm Main Dimension Theorem}{}]\label{thm:H-dim}
Let \ts $\vt_\infty$ \ts denote the Hausdorff dimension of the fractal
$\fC_\infty$ in \eqref{eq:fCinfty}.
Then \. $ \vt_\infty > \delta_0$, for $\delta_0=0.775$.
In particular, for $A=\Ais$, we already have that $\vt_A > \delta_0 = 0.775$.
\end{thm}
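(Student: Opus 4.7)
The plan is to realize $\fC_A$ as the attractor of an explicit iterated function system (IFS) and bound its Hausdorff dimension via the thermodynamic formalism. For each $1\le b\le A$, define the Möbius contraction
\[
\phi_b : [0,1]\to [0,1], \qquad \phi_b(x) \ = \ [b,1,x] \ = \ \frac{1+x}{b(1+x)+1}\,,
\]
and observe that $\fC_A$ is precisely the invariant set of $\{\phi_b\}_{1\le b\le A}$. The system is conformal, uniformly contracting, and satisfies the open set condition on $[0,1]$, so Bowen's formula applies: $\vt_A$ is the unique $s\in(0,1)$ at which the topological pressure $P(s)$ of the potential $-s\log|\phi_b'|$ vanishes, equivalently the unique $s$ for which the leading eigenvalue of the Ruelle transfer operator
\[
\mathcal{L}_s f(x) \ = \ \sum_{b=1}^A |\phi_b'(x)|^s\, f(\phi_b(x))
\]
equals one. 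Since $\mathcal{L}_s$ is monotone decreasing in $s$ (in its leading eigenvalue), it suffices to show that at $s^\ast = 0.775$ the leading eigenvalue of $\mathcal{L}_{s^\ast}$ strictly exceeds $1$ for $A=\Ais$.

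I would verify this through the Pollicott--Vytnova determinant method. Because each $\phi_b$ extends as a holomorphic contraction of a disk containing $[0,1]$ into itself, the operator $\mathcal{L}_s$ is nuclear of order zero on an appropriate Hardy/Bergman space, and its Fredholm determinant
\[
D(z,s) \ := \ \det(I - z\,\mathcal{L}_s) \ = \ \exp\!\left(-\sum_{n=1}^\infty \frac{z^n}{n}\sum_{\mathbf{b}\in\{1,\ldots,A\}^n}\frac{|\phi_{\mathbf{b}}'(x_{\mathbf{b}})|^s}{1-\phi_{\mathbf{b}}'(x_{\mathbf{b}})}\right)
\]
is entire in $z$, where $\phi_{\mathbf{b}}=\phi_{b_1}\!\circ\cdots\circ\phi_{b_n}$ and $x_{\mathbf{b}}$ is its unique fixed point in $[0,1]$. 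The coefficients of $D(z,s)$ as a power series in $z$ are known to decay super-exponentially (like $e^{-cn^2}$), so truncating the sum over words of length $\le N$ for modest $N$ yields a rigorous approximation to $D(1,s^\ast)$ with a provably tiny tail.

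The computation is then: set $s^\ast=0.775$, $A=\Ais$, and for each word $\mathbf{b}$ of length $n\le N$ compute the fixed point $x_{\mathbf{b}}$ and its derivative $\phi_{\mathbf{b}}'(x_{\mathbf{b}})$ (both explicitly rational in the convergents of $\mathbf{b}$) using interval arithmetic, carrying certified error bars throughout. Summing with signs as dictated by the determinant expansion, and bounding the omitted tail by the Pollicott--Vytnova super-exponential estimate, produces rigorous bracketing intervals for $D(1,s^\ast)$. Since $D(1,\vt_A)=0$ and $D(1,\cdot)$ changes sign monotonically at $s=\vt_A$, certifying that $D(1,s^\ast)<0$ (in the correct normalization) proves $\vt_A > s^\ast$.

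The main obstacle is a computational-complexity tension: pushing $A$ large enough that $\vt_A$ comfortably exceeds $0.775$ makes the number $A^n$ of length-$n$ words explode, yet we also need $N$ large enough for the tail bound to beat the margin $\vt_A - 0.775$, which the estimates in \eqref{eq:barvtEst} already warn us will be small (the gap is in the hundredths). Managing interval-arithmetic precision loss during compositions of $110$ distinct Möbius maps, and certifying the \emph{strict} inequality rather than just numerical evidence, is the delicate part; the Pollicott--Vytnova setup is designed exactly to make this feasible, and executing it for $A=\Ais$ at truncation depth on the order of tens of symbols should yield the required rigorous separation.
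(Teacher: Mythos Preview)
Your thermodynamic setup—realizing $\fC_A$ as the attractor of M\"obius contractions, invoking Bowen's formula, and reducing to the lead eigenvalue of the transfer operator $\sL_s$—is correct and is exactly how the paper begins (with the conjugate maps $T_b(x)=(b+x)/(1+b+x)$). The divergence is in how one certifies that this eigenvalue exceeds~$1$ at $s=0.775$.

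You propose the periodic-orbit Fredholm determinant. This is sound in principle but infeasible at $A=110$: the $n$-th trace is a sum over $A^n$ periodic orbits, and with the weakest branch ($b=1$) contracting only by $\approx 1/4$, the super-exponential decay of the coefficients does not overcome the growth $110^n$ until $n\gtrsim\log_4 110\approx 3.4$, so the tail does not drop below the $\sim 10^{-2}$ margin until $N$ is around $9$--$10$. Your own suggestion of ``tens of symbols'' thus entails $\gtrsim 110^{10}\approx 10^{20}$ orbit evaluations in interval arithmetic. You correctly flagged the tension but underestimated it by many orders of magnitude; the determinant route is built for small alphabets and does not survive $A=110$.

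The paper instead uses the \emph{min--max test function} lemma from the same Pollicott--Vytnova toolbox: if a positive polynomial $f$ on $[0,1]$ satisfies $[\sL_s f](x)>f(x)$ pointwise, then the lead eigenvalue exceeds~$1$ and hence $\vt_A>s$. Checking this costs $O(A)$ per evaluation point—linear rather than exponential in~$A$. The paper manufactures such an $f$ by Chebyshev--Lagrange interpolation of the (unknown) Perron eigenfunction: at $s=0.775$, $A=110$, interpolation order~$5$, one obtains an explicit degree-$4$ polynomial with $f>0.3$ and $\sL_s f - f > 7\times 10^{-5}$ on $[0,1]$, verifiable by elementary calculus. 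That single inequality is the entire proof; the determinant never enters.
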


This finally allows us to apply the Main Diophantine Theorem~\ref{thm:BK-dim-strong} and Theorem~\ref{thm:Kan} to conclude  Theorem~\ref{thm:BK-strong}.

\begin{rem}\label{rmk:vtInfty}
We have made no effort to optimize the value of $A=\Ais$.
That said, note that it is possible to prove also the upper bound $\vt_\infty\le \bar\vt< 0.799$ (see \S\ref{sec:vtInfty}), showing that $\vt_\infty$ exceeds \emph{in the  hundredths place} the best available value of the parameter $\de_0=0.775$.
\end{rem}

\smallskip

\subsection{Paper structure} \label{ss:intro-structure}
As explained above,  Theorem~\ref{t:main} follows from a combination of
the Main Graph Theorem~\ref{thm:graphMain}, the Main Diophantine Theorem~\ref{thm:BK-dim-strong}, and the Main Dimension Theorem~\ref{thm:H-dim}.
We present proofs of each in separate sections (Sections~\ref{s:CF}--\ref{s:H-dim}).
	We conclude with final remarks and open problems in Section~\ref{s:finref}.

\medskip


%


\section{Planar graphs and continued fractions}\label{s:CF}

\subsection{Recursion identities}\label{ss:CF-rec}
Let \ts $G=(V,E)$ \ts be a graph without loops, but possibly with
multiple edges.  Graph \ts $G$ \ts is called \emph{simple} if it
does not have multiple edges.

For an edge \ts $e\in E$, denote by \ts $G-e$ \ts the
graph obtained from $G$ by deleting the edge~$e$.
Similarly, denote by \ts  $G/e$ \ts the graph obtained by
contracting vertices incident to $e$ and removing the
resulting loops.
As in the introduction, denote by \ts $\tau(G)$ \ts the number
of spanning trees in~$G$.
Recall that \ts $\tau(G) = \tau(G-e) + \tau(G/e)$, see e.g.\ \cite[Thm~II.18]{Tut84}.

Let \ts $G=(V,E)$ \ts be a  graph and let \ts $e \in E$ \ts be an edge in~$G$.
A pair \ts $(G,e)$ \ts is called a \defn{marked graph}.
Define the \defn{spanning tree vector} \ts of the marked graph as follows:
$$
\bv(G,e) \, := \, (\tau(G-e),\tau(G/e))\..
$$

We consider two dual operations on marked graphs.
Denote by \. $\f^k(G,e)$ \.  the marked graph \ts $(G',e')$ \ts
obtained by replacing edge~$e$ with a path of length $(k+1)$, and marking one of the
new edges as~$e'$ (see Figure~\ref{f:two-oper}).
Note that if \ts $G$ \ts is connected, simple and planar, then
so is~$G'$.
Denote \ts $G'=(E',V')$, and observe that \ts  $|E'| = |E| + k$,
$|V'| = |V| + k$.


\begin{lemma}\label{lem:rec-f}
Suppose that \ts $\at(G-e)>0$ \ts and \ts $\at(G/e)>0$. Then we have that
$$\bv(G',e') \, = \, \bv(G,e)\cdot \mattwo 1k01.$$
\end{lemma}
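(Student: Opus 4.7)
The plan is to reduce the matrix identity to two scalar identities. Writing $a := \tau(G-e)$ and $b := \tau(G/e)$, so that $\bv(G,e) = (a,b)$, the claimed equation $\bv(G',e') = (a,b) \cdot \mattwo{1}{k}{0}{1}$ is equivalent to showing
\[
\tau(G'-e') \. = \. a \qquad \text{and} \qquad \tau(G'/e') \. = \. ka + b.
\]
I would prove these two identities separately using the explicit structure of $G' = \f^k(G,e)$.

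For the first identity, I would use a direct bijection on spanning trees. Label the inserted path as $u = v_0 - v_1 - \cdots - v_{k+1} = v$ (where $e = (u,v)$ and $e' = (v_i, v_{i+1})$). Removing $e'$ from $G'$ leaves the subgraph $G - e$ together with two dangling paths: one from $u$ to $v_i$ (possibly empty when $i=0$), and one from $v$ to $v_{i+1}$ (possibly empty when $i=k$). Every spanning tree of $G' - e'$ must contain all edges of both dangling paths, since each has a vertex of degree one; conversely, appending these obligatory edges to any spanning tree of $G - e$ produces a spanning tree of $G' - e'$. This yields $\tau(G' - e') = \tau(G - e) = a$.

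For the second identity, note first that contracting $e'$ merges $v_i$ and $v_{i+1}$, so $G'/e'$ is precisely the graph obtained from $G$ by replacing $e$ with a path of length $k$. More generally, let $K_j$ denote the graph obtained from $G$ by subdividing $e$ into a path of length $j$, so that $K_1 = G$ and $K_k = G'/e'$. I would prove by induction on $j$ that $\tau(K_j) = ja + b$. The base case $j=1$ is the deletion-contraction identity $\tau(G) = \tau(G - e) + \tau(G/e) = a + b$. For the inductive step, apply deletion-contraction in $K_{j+1}$ to any edge $f$ on the inserted path: contracting $f$ yields $K_j$, contributing $\tau(K_j) = ja + b$ by induction, while deleting $f$ reduces, via the same dangling-path bijection as above, to $\tau(G - e) = a$. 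Summing gives $\tau(K_{j+1}) = (j+1)a + b$, and hence $\tau(G'/e') = \tau(K_k) = ka + b$.

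The argument is essentially a routine unwinding of deletion-contraction, and the only (very minor) subtlety is articulating the dangling-path bijection cleanly in the boundary cases $i \in \{0, k\}$, where one of the two pendant paths is empty. The hypothesis $a, b > 0$ is not strictly required for this identity itself; it presumably enters in subsequent lemmas where the vector $\bv(G,e)$ drives an inductive construction (so that divisions or coprimality arguments on its entries make sense).
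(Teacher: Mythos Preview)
Your proof is correct and follows essentially the same approach as the paper's: both arguments reduce to the pendant-path observation that $\tau(G'-e')=\tau(G-e)$ and then use deletion--contraction inductively along the subdivided path. The paper packages the induction slightly more efficiently by noting $\f^k = \f^1\circ\f^{k-1}$, so it suffices to check $k=1$ (where $G'/e'\simeq G$ directly), after which the matrix identity $\mattwos1101^k=\mattwos1k01$ handles the rest; your $K_j$ induction is the unrolled version of this.
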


\begin{proof}
Note that \ts $\f^k = \f^1\big(\f^{k-1}\big)$.  Thus it suffices to prove the lemma for $k=1$,
as the general case follows by induction.  Observe that \. $\at\big(G'-e') =  \at(G-e)$,
since the new edge created by $\f$ is a bridge to an endpoint in \ts $G'-e'$.  We also have \ts $G'/e' \simeq G$ \ts
by the construction of~$\f$, which gives \ts $\at\big(G'/e'\big) = \tau(G) = \at(G-e) + \at(G/e)$.
This gives the result.
\end{proof}


\smallskip

For the second operation,
denote by \. $\g^k(G,e)$ \.  the marked graph \ts $(G'',e'')$ \ts obtained
by replacing edge~$e$ with $(k+1)$ parallel edges, and marking one of the
new edges as~$e''$  (see Figure~\ref{f:two-oper}).
Note that if \ts $G$ \ts is connected and planar, then
so is~$G''$, but simplicity is not in general preserved, since we are creating multiple edges.  Denote \ts $G''=(E'',V'')$, and observe that \ts  $|E''| = |E| + k$,
$|V''| = |V|$.

\begin{lemma}\label{lem:rec-g}
Suppose that \ts $\at(G-e)>0$ \ts
and \ts $\at(G/e)>0$. Then we have:
$$\bv(G'',e'') \, = \, \bv(G,e)\cdot \mattwo 10k1.$$
\end{lemma}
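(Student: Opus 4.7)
The plan is to mirror the proof of Lemma~\ref{lem:rec-f} almost verbatim, since the operation $\g$ is set up as the ``dual'' of $\f$ (parallel edges versus subdivided edges). Unpacking the claimed matrix identity, what needs to be verified is the pair of equalities
\[
\at(G''-e'') \. = \. \at(G-e) \. + \. k\,\at(G/e), \qquad \at(G''/e'') \. = \. \at(G/e).
\]
As in Lemma~\ref{lem:rec-f}, I would first reduce to the case $k=1$ by observing that $\g^k = \g^1\big(\g^{k-1}\big)$, so the result follows by induction once the base case and a one-step update are established.

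For the base case $k=1$, the graph $G''$ has two parallel edges, say $e''$ and $\tilde e$, in place of $e$. The key structural observations are: $G''-e''$ is obtained by deleting one of the two parallel edges, leaving a single edge between the same endpoints, so $G''-e'' \simeq G$; and $G''/e''$ contracts one parallel edge, which forces the other to become a loop that is then removed, giving $G''/e''\simeq G/e$. Combined with the standard deletion--contraction identity $\at(G)=\at(G-e)+\at(G/e)$, this immediately yields
\[
\at(G''-e'') \. = \. \at(G) \. = \. \at(G-e) + \at(G/e), \qquad \at(G''/e'') \. = \. \at(G/e),
\]
which is exactly $\bv(G'',e'') = \bv(G,e)\cdot\mattwos 1011$.

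For the inductive step, I would apply the induction hypothesis to $H:=\g^{k-1}(G,e)$ with marked edge $\tilde e$, giving $\bv(H,\tilde e) = \bv(G,e)\cdot\mattwos 10{k-1}1$, and then apply the base case to $G'' = \g^1(H)$ to multiply by $\mattwos 1011$ on the right. Since $\mattwos 10{k-1}1\cdot\mattwos 1011 = \mattwos 10k1$, the claim follows. The positivity hypotheses $\at(G-e),\at(G/e)>0$ are used only to ensure that the minor graphs appearing are connected so that all the spanning-tree counts remain strictly positive throughout the induction, paralleling their role in Lemma~\ref{lem:rec-f}; there is no real obstacle, and the argument is essentially a direct computation from deletion--contraction.
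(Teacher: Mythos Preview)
Your proof is correct and is precisely the argument the paper has in mind: the paper omits the proof, stating only that it ``is similar to the proof of Lemma~\ref{lem:rec-f},'' and your write-up carries out exactly that dual computation (with the roles of deletion and contraction swapped) via the same reduction $\g^k=\g^1(\g^{k-1})$ and the identifications $G''-e''\simeq G$, $G''/e''\simeq G/e$.
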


The proof is similar to the proof of Lemma~\ref{lem:rec-f} and will be omitted.
In fact, Lemma~\ref{lem:rec-g} follows from Lemma~\ref{lem:rec-f} via matroid
duality, but we will not need this observation.  Finally, note that
\begin{equation}\label{eq:gcd}
\gcd\big(\at(G'-e'), \at(G'/e')\big) \ = \   \gcd\big(\at(G''-e''), \at(G''/e'')\big)
\ = \   \gcd\big(\at(G-e), \at(G/e)\big),
\end{equation}
for all \ts $k\ge 1$.

\begin{figure}[hbt]
 \begin{center}
   \includegraphics[height=2.95cm]{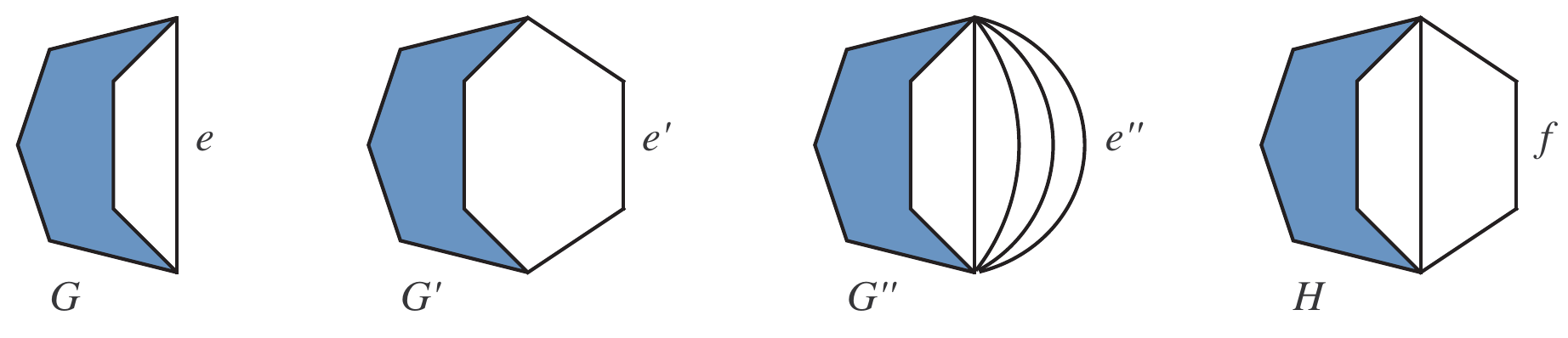}
   \vskip-.2cm
   \caption{Marked graphs $(G,e)$, \. $(G',e')=\f^2(G,e)$, \. $(G'',e'')=\g^3(G,e)$ \.
   and \. $(H,f) = \h^2(G,e)$.}
   \label{f:two-oper}
 \end{center}
\end{figure}

\smallskip

\subsection{Proof of Theorem~\ref{thm:graphMain}} \label{ss:CF-proof}
We will prove the theorem as a consequence of the following slightly more general lemma.

\smallskip

\begin{lemma} \label{l:CF-graph-2}
	Let \ts $\kk,\jj \geq 1$ \ts  be positive integers with \ts $\kk \leq  \jj$ \ts and  \ts $\gcd(\kk,\jj)=1$.
	Suppose that
	\begin{equation}\label{eq:CFEagain}
	\frac{\kk}{\jj} \ = \   [b_1,1,b_2,1,\ldots, b_m,1],
	\end{equation}
	for some \. $b_1,\ldots, b_m\ge 1$.
	Then there exists a simple planar graph \ts $G=(V,E)$ \ts and  edge \ts $e\in E$, such that
	\[
		\tau(G-e) \. = \. \kk, \quad \tau(G/e) \. = \. \jj, \quad \text{and} \quad |V| \. = \.  b_1+\ldots+b_m+2\ts.
	\]
\end{lemma}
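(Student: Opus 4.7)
The plan is to proceed by induction on $m$, building the marked graph block by block using the two operations $\f^k$ and $\g^k$ from Lemmas~\ref{lem:rec-f} and~\ref{lem:rec-g}. These correspond to right multiplication of the spanning tree row vector $\bv$ by $\mattwos{1}{k}{0}{1}$ and $\mattwos{1}{0}{k}{1}$, respectively, and the pairing in~\eqref{eq:CFEagain} of each $b_i$ with a trailing $1$ strongly suggests that each block $(b_i, 1)$ in the continued fraction should correspond to a single pair of such operations.

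For the base case $m = 1$, one has $\kk/\jj = [b_1, 1] = 1/(b_1 + 1)$, so $(\kk, \jj) = (1, b_1 + 1)$. I would take $G$ to be a cycle on $b_1 + 2$ vertices with any edge $e$ marked; then $G - e$ is a path on $b_1+2$ vertices so $\at(G-e) = 1 = \kk$, while $G/e$ is a cycle on $b_1 + 1$ vertices so $\at(G/e) = b_1 + 1 = \jj$. The graph is simple, planar, and connected, and $|V| = b_1 + 2$, as required.

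For the inductive step, assume the claim for length $m$, and consider $\kk/\jj = [b_1, 1, b_2, 1, \ldots, b_{m+1}, 1]$. Setting $\kk'/\jj' := [b_2, 1, \ldots, b_{m+1}, 1]$, an elementary unfolding of the continued fraction gives $\kk = \kk' + \jj'$ and $\jj = b_1 \kk' + (b_1 + 1) \jj'$. By the inductive hypothesis there exists a simple planar connected graph $(G, e)$ with $\bv(G, e) = (\kk', \jj')$ and $|V(G)| = b_2 + \ldots + b_{m+1} + 2$. First apply $\g^1$ to obtain $(G', e')$, then $\f^{b_1}$ to obtain $(G'', e'')$; by Lemmas~\ref{lem:rec-g} and~\ref{lem:rec-f},
\[
\bv(G'', e'') \, = \, (\kk', \jj') \cdot \mattwo{1}{0}{1}{1} \cdot \mattwo{1}{b_1}{0}{1} \, = \, \big(\kk' + \jj',\ b_1 \kk' + (b_1 + 1) \jj'\big) \, = \, (\kk, \jj).
\]
Since $\g^1$ adds no vertices and $\f^{b_1}$ adds $b_1$ new ones, we obtain $|V(G'')| = b_1 + b_2 + \ldots + b_{m+1} + 2$, and the coprimality of $(\kk, \jj)$ is inherited from $\gcd(\kk',\jj')=1$ via~\eqref{eq:gcd}.

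The main obstacle is maintaining simplicity across the inductive step, since the operation $\g^1$ itself creates a multi-edge by doubling $e$. The point is that the subsequent $\f^{b_1}$ subdivides the newly created parallel copy into a path of length $b_1 + 1$; since the hypothesis requires $b_1 \geq 1$, this path has length at least two, so between the two endpoints of the original $e$ in $G''$ there is exactly one direct edge together with a vertex-disjoint path of length $\geq 2$, and no pair of vertices ends up joined by more than one edge. Planarity is preserved by routing the new path through a face of $G$ incident to $e$, and connectedness is immediate. Thus the composite $\f^{b_1} \circ \g^1$ corresponds precisely to prepending the block $(b_1, 1)$ to the continued fraction while keeping the class of simple planar connected marked graphs closed under the construction, completing the induction.
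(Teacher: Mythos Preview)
Your proof is correct and follows essentially the same approach as the paper: the composite operation $\f^{b_i}\circ\g^{1}$ you use in the inductive step is exactly the paper's operation $\h^{b_i}$, and unwinding your induction reproduces the paper's closed-form construction $(G,e)=\h^{b_1}\cdots\h^{b_m}(P_1,u)$ (your base cycle on $b_m+2$ vertices being $\h^{b_m}(P_1)$). The only cosmetic difference is that you compute $\bv$ by directly unfolding the continued-fraction recursion, whereas the paper rewrites the matrix product via the identity $\mattwos{1}{0}{1}{1}\mattwos{1}{k}{0}{1}=\mattwos{0}{1}{1}{1}\mattwos{0}{1}{1}{k}$ and appeals to the standard matrix/continued-fraction correspondence~\eqref{eq:matCFE}.
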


\smallskip

\begin{proof}
Let $G=(V,E)$ be a connected simple planar graph with a marked edge \ts $e=(x,y) \in E$.
For \ts $k \geq 1$, consider a marked graph
$$
\h^k(G,e) :=\f^k\big(\g(G,e)\big)
$$
obtained by adding an edge parallel to~$e$, and subdividing~$e$ with $k$ vertices.
This operation is equivalent to adding a path with $k+1$ edges
connecting $x$ and~$y$, and marking
one of the edges in the new path
 (see Figure~\ref{f:two-oper}).

Note that marked graph  \. $(H,f):=\h^k(G,e)$ \. is a connected simple planar
graph by the description above.  Combining Lemma~\ref{lem:rec-f} and Lemma~\ref{lem:rec-g},
we have:
$$
\bv(H,f) \ = \
\bv(G,e)\cdot \mattwo 1011\mattwo1k01
.
$$

Here is a trivial observation (one made already in a completely unrelated setting, namely \cite[p. 1042]{KontorovichMcNamaraWilliamson2017}):
$$
\mattwo
1011
\mattwo
1{k}01
=
\mattwo
0111
\mattwo
011k
.
$$
Thus
\begin{equation}\label{eq:Ups}
\bv(H,f) \ = \
\bv(G,e)\cdot \mattwo 0111\mattwo011k
.
\end{equation}

Now let $P_1$ be a marked graph consisting of a single marked edge $u$ connecting
two vertices.
Then $\bv(P_1,u)=(0,1)$.
Let
 \ts  $(G,e)$ \ts be a marked graph given by
\[  (G,e) \, := \,   \h^{b_1} \h^{b_2} \cdots \h^{b_{m}} (P_1,u),
\]
see Figure~\ref{f:oper-end} (and compare to Figure~\ref{fig:1}).

\begin{figure}[hbt]
 \begin{center}
   \includegraphics[height=3.1cm]{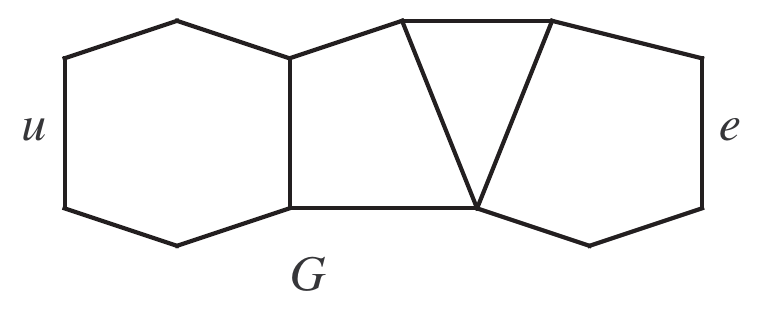}
   \vskip-.5cm
   \caption{Marked graph $(G,e) = \h^3\h^1\h^2\h^4(P_1,u)$.}
   \label{f:oper-end}
 \end{center}
\end{figure}

From above, \ts $G=(V,E)$ \ts is a connected simple planar graph.
By induction, we have:
\[  |V| \, = \,  b_1\. + \. \ldots \. + \. b_m \. + \. 2\ts.
\]
Similarly, equation \eqref{eq:Ups} gives by induction:
\begin{equation}\label{eq:bvGe2}
\bv(G,e) \, = \,
(0,1)\cdot\mattwo 0111\mattwo011{b_m}\cdots\mattwo 0111\mattwo011{b_1}.
\end{equation}
There is a well-known general association between the
matrix product:
\begin{equation}\label{eq:matCFE}
\mattwo
011{a_\ell}
\cdots
\mattwo
011{a_1}
 \, = \,
\mattwo * *\kk\jj
\end{equation}
and
the continued fraction expansion:
$$
\frac \kk\jj \, = \, [0;a_1,a_2,\dots,a_\ell].
$$
It follows that
$$
\bv(G,e)=(\tau(G-e),\tau(G/e))=(\kk,\jj),
$$
where $\gcd(\kk,\jj)=1$ and  $\kk/\jj$ has continued fraction expansion as in \eqref{eq:CFEagain}. 
This completes the proof.
\end{proof}

Finally, we can now prove Theorem~\ref{thm:graphMain}.
From Lemma~\ref{l:CF-graph-2}, we know that the graph $G-e$ (shown in Figure~\ref{fig:2}) has $\tau(G-e)=t$.


\begin{figure}[hbt]
\begin{center}
{\small
\begin{tikzpicture}[
    vertex/.style={circle, fill, inner sep=1.5pt},
    node distance=0.5cm
]
    \node[vertex] (a1) {};
    \node[vertex, below=2cm of a1] (a2) {};
    \draw (a1) -- (a2);

    \node[vertex, right=2cm of a1] (b1) {};
    \node[vertex, below=0.5cm of b1] (b2) {};
    \node[vertex, below=0.5cm of b2] (b3) {};
    \draw[dotted] (b3) -- +(0,-0.5cm);
    \node[vertex, below=.7cm of b3] (b4) {};
    \draw (b1) -- (b2) -- (b3);
    \draw (b3) -- (b4);

    \node[vertex, right=2cm of b1] (c1) {};
    \node[vertex, below=0.5cm of c1] (c2) {};
    \node[vertex, below=0.5cm of c2] (c3) {};
    \draw[dotted] (c3) -- +(0,-0.5cm);
    \node[vertex, below=.7cm of c3] (c4) {};
    \draw (c1) -- (c2) -- (c3);
    \draw (c3) -- (c4);

    \node[vertex, right=2cm of c1] (d1) {};
    \node[vertex, below=0.5cm of d1] (d2) {};
    \node[vertex, below=0.5cm of d2] (d3) {};
    \draw[dotted] (d3) -- +(0,-0.5cm);
    \node[vertex, below=.7cm of d3] (d4) {};
    \draw (d1) -- (d2) -- (d3);
    \draw (d3) -- (d4);

    \node[vertex, right=2cm of d1] (e1) {};
    \node[vertex, below=0.5cm of e1] (e2) {};
    \node[vertex, below=0.5cm of e2] (e3) {};
    \draw[dotted] (e3) -- +(0,-0.5cm);
    \node[vertex, below=.7cm of e3] (e4) {};
    \draw (e2) -- (e3);
    \draw (e3) -- (e4);

    \draw[dashed] ($(b1)-(0,1.1)$) ellipse (0.6cm and 1.4cm);
    \draw[dashed] ($(c1)+(0,-1.1)$) ellipse (0.6cm and 1.4cm);
    \draw[dashed] ($(d1)+(0,-1.1)$) ellipse (0.6cm and 1.4cm);
    \draw[dashed] ($(e1)+(0,-1.1)$) ellipse (0.6cm and 1.4cm);

    \draw[bend left=30] (a1) to (b1);
    \draw[bend left=30] (a1) to (c1);
    \draw[bend left=30] (a1) to (d1);
    \draw[bend left=30] (a1) to (e1);

    \draw (a2) -- (b4);
    \draw (b1) -- (c4);
    \draw (c1) -- ($(c1) !0.33! (d4)$);
    \draw ($(c1) !0.67! (d4)$) -- (d4);
    \draw (d1) -- (e4);

    \node[right=.5cm of c1] {$\cdots$};
    \node at ($(c1)!0.5!(d4)$) {\(\ddots\)};
    \node[above=.1cm of b4] {$\vdots$};
    \node[above=.1cm of c4] {$\vdots$};
    \node[above=.1cm of d4] {$\vdots$};
    \node[above=.1cm of e4] {$\vdots$};

    \node[below=0.3cm of b4] {$b_m$};
    \node[below=0.3cm of c4] {$b_{m-1}$};
    \node[below=0.3cm of d4] {$b_2$};
    \node[below=0.3cm of e4] {$b_1$};
\end{tikzpicture}
}\end{center}
\vskip-.3cm
\caption{A typical graph $G-e$ from Lemma~\ref{l:CF-graph-2}}
\label{fig:2}
\end{figure}

This graph leaves ``tails'' of paths, which can be trimmed without changing the number of spanning trees, resulting in the graph shown in Figure~\ref{fig:1}.

\smallskip


\section{Bourgain--Kontorovich technology}\label{s:BK}

\subsection{Thin orbits}\label{ss:thinOrbits}
Given $A>0$, let
$\cR_A$ denote the set of fractions of interest,
\begin{equation}\label{eq:RA-intro}
\cR_A \,  := \, \big \{ \. \tfrac{\kk}{\jj} = [b_1,1,b_2,1,\ldots,b_m,1] \ : \ 
1\le b_i \le A \ \  \text{for all} \ \ 1\le i \le m\.\big\}
\end{equation}
and let $\cN_A$ be the set of its numerators,
$$
\cN_A \, := \, \big \{ \ts \kk \in \nn \ : \ \tfrac{\kk}{\jj} \in \cR_A \ \  \text{for some} \  \jj > \kk \ts, \ \gcd(\kk,\jj)=1 \ts \big\}.
$$
As already suggested by \eqref{eq:matCFE}, a starting point of the analysis is to convert questions about continued fractions into ones about ``Thin Orbits,'' so that we may use techniques from the
 Orbital Circle Method (see \cite[\S5]{Kon13}). To this end, it is natural to introduce the matrix semigroup
$$
\G_A^{(0)} \, := \, \left\<  \begin{psmallmatrix}
  0 & 1  \\
  1 & a
\end{psmallmatrix} \.
 : \. 1\le a\le A \right\>^+ \ \cap \ \SL(2,\Z)
$$
which corresponds to all even-length words in matrices of the kind appearing in \eqref{eq:matCFE}, and having all ``partial quotients'' $a$ bounded by $A$.

For the application to $\cR_A$ in \eqref{eq:RA-intro}, we introduce the corresponding semigroup
$$
\G_A :=
\left\<
\begin{psmallmatrix}
  0 & 1  \\
  1 & 1
\end{psmallmatrix}
\begin{psmallmatrix}
  0 & 1  \\
  1 & b
\end{psmallmatrix}
\, : \,
1\le b \le A \right\>^+
,
$$
so that $\mattwos **\kk\jj \in \G_A$ if and only if $\kk/\jj \in \cR_A$.
Then to access the numerator $\kk$ in $\cN_A$, we simply take the inner product:
\begin{equation}\label{eq:cNAis}
\cN_A \ = \ \<\bv_1 \cdot \G_A, \bv_2\>,
\end{equation}
where $\bv_1:=(0,1)$ and $\bv_2=(1,0)$.

\begin{rem}\label{rmk:NumDenom}
While the Diophantine Conjecture~\ref{conj:Zaremba-strong} deals with numerators, Zaremba's Conjecture~\ref{conj:Zaremba} is about denominators.  In light of \eqref{eq:matCFE}, this merely amounts to replacing $\bv_2=(1,0)$ above with $\bv_2=(0,1)$.
\end{rem}
\smallskip

\subsection{Proof of Theorem~\ref{t:main-exp}}\label{sec:Pf1p1}
It is already possible to prove Theorem~\ref{thm:main1p1}, which implies Theorem~\ref{t:main-exp}. Indeed, let
 $B_N=B_N^{(A)}$ denote the intersection of $\G_A$ with a ball of radius $N$ in $\SL(2,\rr)\subset \rr^4$ with respect to a fixed archimedean norm, and let
\begin{equation}\label{eq:RNdef}
R_N (n) \, := \, \sum_{\ga\in B_N}\textbf{1}_{\{\<\bv_1\ga,\ts \bv_2\> \ts = \ts n\} }
\end{equation}
be the ``representation number,'' that is, the multiplicity with which an integer $n$ of size roughly $N$ occurs in $\cN_A$ from \eqref{eq:cNAis}.
The sum of these representation numbers is of course just the size of the ball, which is given (see \cite{Lalley1989, Hensley1989}) by
\begin{equation}\label{eq:Lalley}
\sum_{n}R_N(n) \, = \, |B_N| \, = \, N^{2\vt_A+o(1)},
\end{equation}
as $N\to\infty$. It is easy to see that $R_N(n)\ll N$, which implies  the crude lower bound
$$
\cN_A\cap[1,N] \ \gg \  N^{2\vt_A-1-o(1)}.
$$
This already gives Theorem~\ref{thm:main1p1}, as long as $\vt_A>1/2$, which happens already for $A=4$ (see Remark \ref{rmk:A4}).

It is actually possible to do quite a bit better, with relatively little work, due to the sum-set structure hidden within $\cN_A$. Indeed, suppose that $t/u\in\cR_A$, that is, $\mattwos **\kk\jj \in \G_A$. Then $\cR_A$ also contains
$$
\cfrac{1}{1  \. +  \. \cfrac{1}{1  \. +  \.  \frac{t}{u}}}\, = \, \frac{t+u}{t+2 u},
$$
that is, $\cN_A$ must also contain the numerator $t+u$. Of course $t+u$ and $t$ together determine the pair $(t,u)$. Therefore \eqref{eq:Lalley} actually implies the stronger lower bound
$$
\cN_A\cap[1,N] \ \gg \ N^{\vt_A-o(1)}.
$$
This gives Theorem~\ref{thm:main1p1}, for any $A$ having $\vt_A>0$, including $A=2$. For even stronger implications in this direction, see, e.g., \cite[Theorem 1.23]{BK14}, and developments thereafter.
\smallskip

\subsection{Local-global principles and admissibility}
Next we move on to the proof of Theorem~\ref{thm:BK-dim-strong}, en route to Theorem~\ref{t:main}.
The general Bourgain--Kontorovich machinery  proves results of the following kind, of which Theorem~\ref{thm:BK-dim-strong} is a special case.
Let $\G\subseteq \G^{(0)}_A$ be a finitely generated semigroup,
and for fixed $\bv_1,\bv_2\in\Z^2$, consider the set
$$
\cO \, := \, \<\bv_1 \G,\bv_2\>.
$$
We say that an integer $n$ is \emph{admissible}
if
$$
n\in\cO \. (\text{mod}~q)  \quad \text{for all integers} \ \. q.
$$
Let \ts $\cA=\cA(\G,\bv_1,\bv_2)$ \ts denote the set of admissible integers, and note that \ts $\cO \subseteq \cA$. Then the even more precise density-one version of Theorem~\ref{t:BK'} is the following (see \cite[Theorem 1.8]{BK14}).

\smallskip

\begin{thm}
\label{t:BK1}
There is a constant \ts $\de_0 < 1$ \ts such that, if the Hausdorff dimension $\de_\G$ of the  limit set  of $\G$ satisfies
$\de_\G \. > \. \de_0\ts,$
then $\cO$ contains a density one set of its admissible integers:
\begin{equation}\label{eq:O-admissible}
\frac{\big|\cO \cap \{1,\ldots,N\}\big|}{\big|\cA\cap \{1,\ldots,N\}\big|} \,
\to \, 1 \quad \text{as} \quad N\to\infty.
\end{equation}
\end{thm}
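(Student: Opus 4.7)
The plan is to implement the orbital circle method. Given $\G$ as in the statement, I would first define, for each positive integer $n$ and each scale $N$, the representation number
\[
R_N(n) \, := \, \sum_{\gamma \in B_N} \textbf{1}_{\{\<\bv_1\gamma,\ts \bv_2\> \ts = \ts n\}},
\]
where $B_N \subset \G$ is the intersection of $\G$ with an archimedean ball of radius $N$ in $\SL(2,\rr)$, as in \eqref{eq:RNdef}. The goal is to show that $R_N(n) > 0$ for a density-one subset of admissible $n \in [N/2, N]$, as $N\to\infty$; a dyadic decomposition in $N$ then yields \eqref{eq:O-admissible}. Since $\sum_n R_N(n) = |B_N| = N^{2\de_\G + o(1)}$ by \eqref{eq:Lalley}-type asymptotics, it suffices to prove that $R_N(n)$ is sharply concentrated around its predicted mean on $\cA$, which can then be extracted via a Chebyshev-type second-moment argument.

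The second step is to Fourier-encode $R_N$. Setting
\[
S_N(\theta) \, := \, \sum_{\gamma \in B_N} e\bigl(\theta \ts \<\bv_1\gamma, \bv_2\>\bigr),
\]
one has $R_N(n) = \int_0^1 S_N(\theta)\ts e(-n\theta)\,d\theta$. Following Hardy--Littlewood, I would decompose $[0,1]$ into major arcs $\mathfrak{M}$ (small neighborhoods of rationals $a/q$ with denominator $q$ up to a controlled power of $N$) and the complementary minor arcs $\mathfrak{m}$. The contribution to $R_N(n)$ from $\mathfrak{M}$ should reproduce the expected count: a singular series times a singular integral of size roughly $|B_N|/N \asymp N^{2\de_\G - 1 + o(1)}$ on admissible $n$, vanishing on inadmissible $n$ by construction of $\cA$. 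The evaluation of this main term rests on the structure of $\G$ modulo $q$ together with a strong approximation / expansion input for congruence quotients of $\G$.

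Third, the heart of the matter is the minor-arcs bound. The key device is a semigroup factorization $B_N \approx B_{N_1} \cdot B_{N_2}$ with $N_1 N_2 \approx N$, which converts $S_N(\theta)$ into a bilinear form in $\theta$; one factor is then handled by an $\ell^2$ / large-sieve style inequality, while on the other factor one applies a spectral-gap bound for congruence quotients of $\G$ of Bourgain--Gamburd--Sarnak type. The output is a variance estimate of the shape
\[
\int_{\mathfrak{m}} |S_N(\theta)|^2 \, d\theta \, \ll \, N^{4\de_\G - 2 - \eta}
\]
for some $\eta = \eta(\de_\G) > 0$, valid provided $\de_\G$ exceeds a critical threshold $\de_0 < 1$. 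This is precisely where the Hausdorff dimension hypothesis enters, and it is what pins down the numerical value of $\de_0$.

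Combining the major-arcs main term with the minor-arcs $L^2$ bound via Parseval then shows that $R_N(n)$ agrees with its predicted value up to an error smaller than the main term for all but $o(N)$ admissible $n \in [N/2, N]$, which gives \eqref{eq:O-admissible}. The main obstacle, by a wide margin, is the minor-arcs estimate: squeezing out enough cancellation on $\mathfrak{m}$ to beat the main term demands both a uniform spectral gap for congruence quotients of $\G$ and a delicate renewal-theoretic decomposition of long words in the generators into factors of prescribed length, and it is exactly this step that forces $\de_\G > \de_0$ rather than the naive threshold $\de_\G > 1/2$. Successive refinements of this minor-arcs analysis are what have driven down the numerical value of $\de_0$ from \cite{BK14} through \cite{FK14,Kan15,Kan17,Kan21}, as recorded in \S\ref{ss:intro-Zaremba}.
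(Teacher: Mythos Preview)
Your proposal follows essentially the same orbital circle method sketch that the paper gives in \S\ref{ss:BK-orbital}: representation numbers $R_N(n)$, the Fourier transform $S_N(\theta)$, a major/minor arcs decomposition, a main-term lower bound \eqref{eq:cMNgg}, an $L^2$-averaged error bound \eqref{eq:L2bnd}, and a Cauchy--Schwarz finish. Both are sketches of the argument in \cite{BK14}, not full proofs.

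One point of divergence worth flagging: you locate the spectral-gap/expander input in the \emph{minor-arcs} analysis, whereas the paper's description places the expander and Ruelle transfer-operator machinery squarely in the \emph{major-arcs} main-term evaluation, with the error term $\cE_N$ handled by ``more elementary tools'' (large-sieve type inequalities and exponential-sum cancellation). Relatedly, the bilinear factorization $B_N\approx B_{N_1}\cdot B_{N_2}$ you invoke is a simplification; the actual ensemble $\Omega_N$ in \cite{BK14} (and its refinements in \cite{FK14,Hua15,Kan21}) has a genuinely multilinear dyadic structure, and it is the need for a ``spectral sliver'' in this ensemble---not a spectral gap applied directly on the minor arcs---that couples the major-arcs analysis to the density-one conclusion. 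This misattribution does not break the overall strategy, but it does misidentify where the hard work sits.
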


This result is interpreted as an asymptotic ``local-global'' principle for the orbit $\cO$.
See \S\ref{ss:finrem-asy} for further discussion on this result and the rate of convergence in \eqref{eq:O-admissible}.
To adapt it to our setting, the first question is to determine the set of admissible integers. In practice, this is quite simple, and in this case follows from the following.
\begin{lemma}\label{lem:GAmodq}
 Let $A\ge2$.  Then \. $\G_A \ts (\text{\em mod}~q) = \SL(2,\Z/q\Z)$, for all \ts $q\ge 2$.
\end{lemma}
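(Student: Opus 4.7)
The plan is to reduce to generating $\SL(2,\Z/q\Z)$ as a \emph{group}, then exhibit the two standard unipotent generators inside the image.

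\medskip

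\textbf{Step 1 (semigroup vs.\ group).} First I would observe the general fact that any sub-semigroup of a finite group is automatically a subgroup: for $g$ in a finite group, the powers $g, g^2, g^3,\ldots$ must cycle, so some $g^k$ equals the identity and $g^{-1}=g^{k-1}$ lies in the semigroup. Applying this to the image of $\G_A$ in the finite group $\SL(2,\Z/q\Z)$, it suffices to prove that the subgroup of $\SL(2,\Z/q\Z)$ generated (now with inverses allowed) by the reductions of the generators of $\G_A$ equals all of $\SL(2,\Z/q\Z)$.

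\medskip

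\textbf{Step 2 (factoring the generators).} A direct computation shows
\[
\mattwo{0}{1}{1}{1}\mattwo{0}{1}{1}{b} \ = \ \mattwo{1}{b}{1}{b+1} \ = \ L\. T^b,
\]
where $L:=\mattwos{1}{0}{1}{1}$ and $T:=\mattwos{1}{1}{0}{1}$ are the standard lower and upper unipotents. Thus the generators of $\G_A$ are exactly $g_b := L\. T^b$ for $1 \le b \le A$. Since $A\ge 2$, both $g_1=LT$ and $g_2=LT^2$ are available, and inside the group generated by $g_1, g_2$ we recover
\[
g_1^{-1}g_2 \ = \ T^{-1}L^{-1}\.LT^2 \ = \ T, \qquad g_1 T^{-1} \ = \ LT\.T^{-1} \ = \ L.
\]
Hence the reductions of $L$ and $T$ both lie in the subgroup generated by the image of $\G_A$ modulo $q$.

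\medskip

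\textbf{Step 3 (classical surjectivity).} Finally I would invoke the classical fact that $L$ and $T$ generate $\SL(2,\Z)$ (by the Euclidean-algorithm normal form for $2\times 2$ integer matrices of determinant one), together with the standard strong-approximation statement that the reduction map $\SL(2,\Z)\to\SL(2,\Z/q\Z)$ is surjective for every $q\ge 2$. Combining these with Steps 1--2 yields $\G_A\pmod q = \SL(2,\Z/q\Z)$, as required.

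\medskip

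The argument has no real obstacle: the only conceptual input is the finite-group trick in Step~1 that converts the sub-\emph{semigroup} statement into a sub-\emph{group} statement, after which the computation is essentially the observation that $LT$ and $LT^2$ recover $L$ and $T$.
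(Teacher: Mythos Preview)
Your proof is correct and follows essentially the same approach as the paper: both use the finite-group trick to pass from semigroup to group, then extract the standard unipotents $T=\mattwos{1}{1}{0}{1}$ and $L=\mattwos{1}{0}{1}{1}$ from $M_1$ and $M_2$ (indeed your $g_1 T^{-1}=g_1 g_2^{-1} g_1$ is exactly the paper's $M_1 M_2^{-1} M_1$). Your added observation that $M_b = L\,T^b$ makes the identities transparent rather than ad hoc, but otherwise the arguments are identical.
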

\begin{proof}
Since reduction mod $q$ turns $\G_A$ into a finite group (in which every element has finite order), it follows that the reduction
mod~$q$ of the semigroup \ts $\G_A$ \ts is equal to the reduction mod $q$ of the group generated by \ts $\G_A$.
Let $M_b := \begin{psmallmatrix}
0 & 1  \\
  1 & 1
\end{psmallmatrix} \begin{psmallmatrix}
  0 & 1  \\
  1 & b
\end{psmallmatrix}$.
Note that $M_1^{-1} \cdot M_2 =\begin{psmallmatrix}
  1 & 1  \\
  0 & 1
\end{psmallmatrix}$, and $M_1M_2^{-1}M_1=\begin{psmallmatrix}
  1 & 0  \\
  1 & 1
\end{psmallmatrix}$.  Therefore, the group generated by $M_1$ and $M_2$ is all of $\SL(2,\Z)$.
Hence the mod $q$ reductions of $\G_A$ and $\SL(2,\Z)$ are the same,
and equal to \ts $\SL(2,\Z/q\Z)$.
\end{proof}
It follows immediately from \eqref{eq:cNAis} and Lemma \ref{lem:GAmodq} that all integers are admissible for $\cN_A$.
Then the Main Diophantine Theorem~\ref{thm:BK-dim-strong} follows on applying Theorem~\ref{t:BK1} with $\G=\G_A$.
For the reader's convenience, we give a sketch of some of the ingredients going into the proof of Theorem~\ref{t:BK1}, referring the reader to, e.g., \cite{Kon13} and the original references for more details.

\smallskip

\subsection{Orbital circle method}\label{ss:BK-orbital}
%

To show that an integer \ts $n$ \ts of size about \ts  $N$ \ts 
belongs to $\cO$, 
it suffices to show that it has a positive number of representatives, that is, $R_N(n)>0$ for $R_N$ in \eqref{eq:RNdef}.
%
The Fourier transform of $R_N$ is the exponential sum
$$
S_N(\theta) \, := \, \sum_{\ga\in  B_N}\. \exp(\theta \<v_1\ga,v_0\>),
$$
so that
\begin{equation}\label{eq:RNFourierInv}
R_N(n) \, = \, \int_{\rr/\Z} \. S_N(\theta)\. \exp(-n\theta) \. d\theta.
\end{equation}

The ``circle method'' is the idea akin to signal processing that the above sum may sometimes be estimated effectively by decomposing $\theta$ into the so-called major and minor arcs, the former corresponding to the signal, and the latter meant to represent noise. The major arcs are values of $\theta$ that are ``close'' to fractions with ``small'' denominators (for suitable meanings of these words), and minor arcs are simply the complement. This leads to a decomposition (see \cite[(4.25)]{BK14})
$$
R_N(n) \, = \, \cM_N(n) \. + \. \cE_N(n),
$$
where the ``main'' term $\cM(n)$ is the contribution to \eqref{eq:RNFourierInv} coming from $\theta$ in the major arcs, and $\cE_N(n)$ representing the ``error'' term.

One shows (see, e.g., \cite[Theorem~4.28]{BK14}),
that the main term is at least of the expected order:
\begin{equation}\label{eq:cMNgg}
\cM_N(n) \ > \ C \. \fS(n) \. {| B_N|\over N} \quad \text{for some} \quad C>0,
\end{equation}
where $\fS(n)$ is the ``singular series,'' which vanishes if $n$ is not admissible,
and is roughly of order~$1$ elsewhere. If one could prove that for individual $n=\Theta(N)$,
the error term is significantly smaller,
\begin{equation}\label{eq:cENlittleOh}
\cE_N(n) \overset{?}=o\left({| B_N|\over N}\right),
\end{equation}
then the representation number is asymptotically positive, and hence the full local-global principle holds: every sufficiently large admissible number is represented. Unfortunately, as discussed in~$\S$\ref{ss:finrem-asy}, there are settings which are analytically indistinguishable from this one, for which there are arbitrarily large admissible integers with no representatives; and hence \eqref{eq:cENlittleOh} is false as the error term \emph{must} be exactly as large as the main term infinitely often. Instead, what one manages to prove in practice is an $L^2$-averaged version of \eqref{eq:cENlittleOh}, namely, a statement roughly of the form (see \cite[Theorem 7.1]{BK14} and \cite[Theorem 9.1]{Hua15}):
\begin{equation}\label{eq:L2bnd}
\sum_{cN \le n \le N} \. |\cE_N(n)|^2 \, = \, o\left( N \cdot {| B_N|^2\over N^2}\right).
\end{equation}
An elementary Cauchy--Schwartz type argument concludes Theorem~\ref{t:BK1} from \eqref{eq:cMNgg} and \eqref{eq:L2bnd}.

\smallskip

The analysis of both $\cM_N$ and $\cE_N$ is long and difficult. The former relies on ``expander graph'' properties for certain families Cayley graphs in congruence towers, and the thermodynamic formalism of Ruelle transfer operators. The analysis of the error term $\cE_N$ uses more elementary tools, but involves a rather complicated process coming from the theory of cancellation in exponential sums. It only succeeds to prove \eqref{eq:L2bnd} if the Hausdorff dimension \ts $\de_\Ga$ \ts of the limit set of $\G$ exceeds the threshold value~$\de_0\ts$.

\smallskip

If all one wants is a ``positive proportion'' result, rather than ``density one,'' then one need not introduce major arcs and can avoid proving \eqref{eq:cMNgg}; see~$\S$8 in the original {\tt arXiv} version
of \cite{BK14}.\footnote{\href{https://arxiv.org/abs/1107.3776v1}{arxiv.org/abs/1107.3776v1}}

\smallskip

\subsection{Improvements on $\delta_0$}

As mentioned in~$\S$\ref{ss:intro-Zaremba}, the error term treatment in Frolenkov--Kan \cite{FK14} and series of papers by Kan \cite{Kan15}--\cite{Kan21} improves on the original minor arcs analysis in \cite{BK14}, leading to improvements in $\de_0$.
We give here a few comments on at least some of their ideas, as well as the  modifications that would be needed to improve their positive proportion results to density one.

In order to establish the error bound \eqref{eq:L2bnd}, it is necessary to modify the representation number $R_N$ defined in \eqref{eq:RNdef} to create bilinear (in fact, multilinear) forms, as follows.
As discussed in \cite[\S3.4]{BK14}, one replaces the full norm ball $B_N$ in $\G$ by a suitable ``ensemble,'' $\Omega_N\subset B_N$ which, in the original argument, has something like the  structure
$$
\Omega_N \ \approx \ \widetilde B_{N^{1/2}} \cdot \widetilde B_{N^{1/4}} \cdot \widetilde B_{N^{1/8}}\cdots,
$$
where $\widetilde B_N$ are some large ``modified balls,'' $\widetilde B_N \subset B_N$. We can view this ensemble diagrammatically as follows:
\begin{center}
\begin{tikzpicture}
\draw[thick, fill=blue!20] (0,0) rectangle (1,1);

\draw[thick, fill=blue!20] (1,0) rectangle (1.5,0.5);

\draw[thick, fill=blue!20] (1.5,0) rectangle (1.75,0.25);

\draw[thick, fill=blue!20] (1.75,0) rectangle (1.875,0.125);

\draw[thick, fill=blue!20] (1.875,0) rectangle (1.9375,0.0625);

\draw[thick] (2.3,0.03125) node {$\cdots$};

\end{tikzpicture}
\end{center}
 Using this ensemble as is allows one to prove Zaremba's conjecture for a positive proportion of denominators, but the
modified balls are not
``spectral,'' that is, not
suitable for the major arcs analysis needed for establishing \eqref{eq:cMNgg}. Therefore one adds a tiny spectral sliver to the first ball (for the precise statement, which is rather technical, see \cite[(3.37)]{BK14}),
which can be expressed diagrammatically as:
\begin{center}
\begin{tikzpicture}
\draw[thick, fill=red!20] (0,0) rectangle (.1,1);

\draw[thick, fill=blue!20] (.1,0) rectangle (1,1);

\draw[thick, fill=blue!20] (1,0) rectangle (1.5,0.5);

\draw[thick, fill=blue!20] (1.5,0) rectangle (1.75,0.25);

\draw[thick, fill=blue!20] (1.75,0) rectangle (1.875,0.125);

\draw[thick, fill=blue!20] (1.875,0) rectangle (1.9375,0.0625);

\draw[thick] (2.3,0.03125) node {$\cdots$};

\end{tikzpicture}
\end{center}
This makes the modified ensemble amenable to analysis.

The first innovation in \cite{FK14} is to refine the ensemble $\Omega_N$ even further, to one of the following rough diagrammatic shape (see \cite[(3.5)]{FK14} and \cite[\S6]{Kan21}):

\begin{center}
\begin{tikzpicture}
\draw[thick] (-0.3,0.03125) node {$\cdots$};

\draw[thick, fill=blue!20] (0,0) rectangle (0.0625,0.0625);

\draw[thick, fill=blue!20] (0.0625,0) rectangle (0.1875,0.125);

\draw[thick, fill=blue!20] (0.1875,0) rectangle (0.4375,0.25);

\draw[thick, fill=blue!20] (0.4375,0) rectangle (0.9375,0.5);

\draw[thick, fill=blue!20] (0.9375,0) rectangle (1.9375,1);

\draw[thick, fill=blue!20] (1.9375,0) rectangle (2.4375,0.5);

\draw[thick, fill=blue!20] (2.4375,0) rectangle (2.6875,0.25);

\draw[thick, fill=blue!20] (2.6875,0) rectangle (2.8125,0.125);

\draw[thick, fill=blue!20] (2.8125,0) rectangle (2.875,0.0625);

\draw[thick] (3.3,0.03125) node {$\cdots$};

\end{tikzpicture}
\end{center}
This ensemble is again missing a spectral component, but it and its further refinements lead to the aforementioned improvements on $\delta_0$, and eventually to Theorem~\ref{thm:Kan}. The idea in Huang \cite{Hua15} combines these improvements with the major arcs analysis of \cite{BK14}, by
reinserting another spectral sliver in the largest component (see \cite[(3.51)]{Hua15}), leading to an ensemble of the shape
\begin{center}
\begin{tikzpicture}
\draw[thick] (-0.3,0.03125) node {$\cdots$};

\draw[thick, fill=blue!20] (0,0) rectangle (0.0625,0.0625);

\draw[thick, fill=blue!20] (0.0625,0) rectangle (0.1875,0.125);

\draw[thick, fill=blue!20] (0.1875,0) rectangle (0.4375,0.25);

\draw[thick, fill=blue!20] (0.4375,0) rectangle (0.9375,0.5);

\draw[thick, fill=red!20] (0.9375,0) rectangle (1.0375,1);
\draw[thick, fill=blue!20] (1.0375,0) rectangle (1.9375,1);

\draw[thick, fill=blue!20] (1.9375,0) rectangle (2.4375,0.5);

\draw[thick, fill=blue!20] (2.4375,0) rectangle (2.6875,0.25);

\draw[thick, fill=blue!20] (2.6875,0) rectangle (2.8125,0.125);

\draw[thick, fill=blue!20] (2.8125,0) rectangle (2.875,0.0625);

\draw[thick] (3.3,0.03125) node {$\cdots$};

\end{tikzpicture}
\end{center}
This  again allows one to apply the major arcs analysis (see  \cite[Theorem 5.5]{Hua15}). In combining the minor arcs analysis in \cite{Kan21}
with the major arcs from \cite{BK14, Hua15}, one
 can show
 the density one result in Theorem~\ref{t:BK1}, with the improved values of the parameter $\delta_0$.


\medskip

\section{Estimating the Hausdorff dimension}\label{s:H-dim}

\subsection{Proof of the Main Dimension Theorem~\ref{thm:H-dim}}\label{sec:pfThmHdim}

A well-known strategy to compute Hausdorff dimensions $\vt_A$  comes from techniques in thermodynamics, namely to consider certain Ruelle-type transfer operators, as follows.
For integers $b\ge1$, let $T_b:[0,1]\to[0,1]$ be the operation corresponding to one step in \eqref{eq:bvGe2}, that is, translation by $b$, inversion, translation by $1$ and inversion again,
$$
T_b(x):=\cfrac{1}{1+
\cfrac{1}{b+x}
}
={b+x\over 1+b+x}.
$$
This has derivative
$$
T_b'(x)={1\over (1+b+x)^2}.
$$
 For $A>1$ (we will take $A=\Ais$) and $s\in(0,1)$ (a parameter
used to estimate
 the Hausdorff dimension),
we define the  \defnb{pressure function}, $P=P_A$, given by
$$
P(s) \ := \
\lim_{n\to\infty} \.
\frac1n \.
\log\Bigg(
\sum_{1\le b_1,\dots,b_n\le A}
\big|
\big(T_{b_1}\circ\cdots\circ T_{b_n}\big)'(0)
\big|^s
\Bigg).
$$
It follows from the work (in much greater generality) of Ruelle \cite{Ruelle1982}
(see also Bowen \cite{Bowen1979}), that the pressure function $P$ is a monotone
decreasing function of $s$, and has a unique zero at the Hausdorff dimension \ts $s=\vt_A$.

It is difficult in practice to rigorously approximate the dimension from $P$ alone, and one introduces the following  \defnb{transfer operator} $\sL_{s}=\sL_{s,A}$;
given a function $f:[0,1]\to \R$,
we define:
\begin{equation}\label{eq:LsDef}
[\sL_{s}f](x) \ := \
\sum_{b=1}^A \.
\big|T_b'(x)\big|^s
(f\circ T_b)(x).
\end{equation}
A suitable space on which the transfer operator $\sL_{s}$ can act is, e.g., the
space $\mathscr S$ of real analytic functions on $[0,1]$, with the supremum norm.

The connection to the pressure function and Hausdorff dimension is given by the Ruelle--Perron--Frobenius theorem, which states that $\sL_{s}$ acting on $\mathscr S$
has a unique maximal eigenvalue $\lambda_s = e^{P(s)}$, and the corresponding eigenfunction
\begin{equation}\label{eq:hsDef}
h_s=h_{s,A}\in \mathscr S
\end{equation}
 is strictly positive on $[0,1]$.
 A nice discussion of this theory  is given by Pollicott--Vytnova in \cite[\S2]{PV22},
 where the authors develop a relatively simple and  easily implementable procedure
 to give rigorous estimates for Hausdorff dimensions in a wide variety of settings.

Of particular importance to us is the mechanism for producing lower bounds for dimensions, using certain min-max inequalities.

\begin{lemma}\label{lem:keyPerron}
Suppose that there exists some $s>0$ and $f$ a polynomial which is positive on $[0,1]$, so that
\begin{equation}\label{eq:keyPerron}
[\sL_{s}f] (x) >  f(x),
\end{equation}
 for all $x\in[0,1]$. Then $\vt_A>s$.
\end{lemma}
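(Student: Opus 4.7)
The plan is to exploit compactness of $[0,1]$ together with the monotonicity (positivity-preserving property) of the transfer operator $\sL_s$, and then relate iterated application of $\sL_s$ to $f$ to its leading eigenvalue $\lambda_s = e^{P(s)}$. The punchline will be that the hypothesis forces $\lambda_s > 1$, hence $P(s) > 0$, hence $s < \vt_A$ by monotonicity of the pressure.

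First, I would upgrade the pointwise strict inequality $[\sL_s f](x) > f(x)$ to a uniform multiplicative one. Since $f$ is a polynomial, strictly positive on the compact interval $[0,1]$, it is bounded below by some $m>0$. Similarly $[\sL_s f](x) - f(x)$ is continuous and strictly positive on $[0,1]$, hence bounded below by some $\varepsilon > 0$. Thus
\[
[\sL_s f](x) \ \ge \ f(x) + \varepsilon \ \ge \ \bigl(1 + \tfrac{\varepsilon}{\|f\|_\infty}\bigr) f(x) \ = \ c \cdot f(x),
\]
for some constant $c > 1$, uniformly in $x \in [0,1]$.

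Next, I would iterate this inequality. The operator $\sL_s$ defined in \eqref{eq:LsDef} is a finite sum with positive weights $|T_b'(x)|^s$ composed with the (continuous) maps $T_b$, so $\sL_s$ is monotone: $g \ge h \ge 0$ pointwise implies $\sL_s g \ge \sL_s h \ge 0$ pointwise. Applying $\sL_s$ repeatedly to the inequality $\sL_s f \ge c f$ and using monotonicity, one obtains $\sL_s^n f \ge c^n f$ pointwise on $[0,1]$ for every $n \ge 1$.

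Finally, I would compare with the Ruelle eigenfunction $h_s$ from \eqref{eq:hsDef}, which satisfies $\sL_s h_s = \lambda_s h_s$ and is strictly positive on $[0,1]$. Since both $f$ and $h_s$ are continuous and strictly positive on the compact set $[0,1]$, there exist constants $0 < C_1 \le C_2 < \infty$ such that $C_1 h_s \le f \le C_2 h_s$ pointwise. By monotonicity of $\sL_s$ and the eigenvalue equation,
\[
c^n f \ \le \ \sL_s^n f \ \le \ C_2 \sL_s^n h_s \ = \ C_2 \lambda_s^n h_s \ \le \ (C_2/C_1)\,\lambda_s^n f.
\]
Evaluating at any point where $f$ is nonzero and letting $n \to \infty$ forces $\lambda_s \ge c > 1$, i.e., $P(s) = \log \lambda_s > 0$. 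Since $P$ is strictly monotone decreasing with unique zero at $s = \vt_A$, this gives $\vt_A > s$, as required. There is no real obstacle: the argument is a standard Perron--Frobenius-style min-max bound, and the only substantive input beyond compactness is the positivity-preserving character of $\sL_s$ together with the Ruelle--Perron--Frobenius identification of $\lambda_s$ with $e^{P(s)}$, both of which are already in force in the setup preceding the lemma.
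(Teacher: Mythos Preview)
Your proof is correct and follows essentially the same approach as the paper's: both use compactness of $[0,1]$ to upgrade the strict inequality to a uniform multiplicative one, deduce that $\lambda_s=e^{P(s)}>1$, and then invoke monotonicity of $P$ to conclude $s<\vt_A$. The only difference is that the paper outsources the implication ``$af\le\sL_sf$ uniformly $\Rightarrow$ $a\le e^{P(s)}$'' to \cite[Lemma~3.1(1)]{PV22}, whereas you unpack this step directly via iteration and comparison with the Ruelle eigenfunction $h_s$; your version is thus more self-contained but not substantively different.
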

Intuitively, if we want values of $s$ with $P$ near zero, that corresponds to eigenvalues $\lambda_s=e^{P(s)}$ of $\sL_s$ near $1$. Since $\lambda_s$ is the largest eigenvalue, \eqref{eq:keyPerron} is evidence that $\lambda_s>1$.
\begin{proof}
Since $[0,1]$ is compact, \eqref{eq:keyPerron} implies that there exists some $a>1$ so that $a f(x) \le [\sL_{s}f](x)$ holds for all $x\in[0,1]$.
By \cite[Lemma 3.1(1)]{PV22}, this implies that $a\le e^{P(s)}$, or $P(s)\ge \log a > 0$. But since $P$ is monotonically decreasing and $P(\vt_A)=0$, we have that $s<\vt_A$.
\end{proof}

This reduces the problem to one of producing, for $s=\delta_0=0.775$, say, a sufficiently large value of $A$ and a positive ``test function'' $f$ for which \eqref{eq:keyPerron} holds.
We are grateful to Polina Vytnova for explaining to us the procedure from \cite{PV22} for finding such functions in practice, namely to simply try to approximate the (positive) eigenfunction $h_s$ in \eqref{eq:hsDef} via
Lagrange-Chebyshev interpolation. (In principle, almost any interpolation scheme should work, but this one seems to perform particularly well in this application.)

To this end, fix an order of approximation, $N$ (we will take $N=\Nis$), and for $1\le j \le N$, define the Chebyshev nodes
$$
y_j \ := \ \frac12\left(\cos\left(\frac{2j-1}{2N}\.\pi\right) + 1\right) \ \in \ [0,1],
$$
and corresponding Lagrange interpolation polynomials
$$
\ell_j(x) \ := \ \prod_{1\le k\le N\atop k\ne j}\frac{x-y_k}{ y_j-y_k}.
$$
These are linearly independent, and orthogonal with respect to the counting measure on the nodes; in particular, $\ell_j(y_k)=1$ if $j=k$ and $0$ otherwise.
For $N$ large, the nodes become dense in $[0,1]$, and the Lagrange polynomials span some ``significant'' subspace of our function space $\mathscr S$.

To find a linear combination of the $\ell_j$'s which approximates the eigenfunction $h_s$ of the transfer operator $\sL_{s}$, simply hit them all with $\sL_s$, and evaluate at the nodes. This should excite a harmonic near $h_s$, giving the approximation. In practice, this is achieved as follows. We evaluate the $N\times N$ matrix, $M$, say, having entries
$$
M_{j,k} := [\sL_{s}\ell_j](y_k),
$$
($1\le j,k\le N$),
and
compute a (left) eigenvector $v_s=(v_s^{(1)},\dots,v_s^{(N)})$ corresponding to its largest eigenvalue. By the orthogonality relations, this $v_s$ suggests which linear combination of the $\ell_j$ should have a high correlation with $h_s$; that is, we should try the test function:
$$
f_s:= \sum_{1\le k\le N}v_s^{(j)}\ell_j.
$$
If it turns out that $f_s$ is positive on $[0,1]$, and that \eqref{eq:keyPerron} is satisfied, then we learn from Lemma \ref{lem:keyPerron} that $s$ is a lower bound for $\vt_A$. If not, we should increase $N$ (or if that continues to fail, increase $A$), and hope to get lucky. In  \cite[Prop. 3.10]{PV22}, Pollicott and Vytnova  prove that, if in fact $\vt_A>s$, then such an $f_s$ will eventually be found by taking $N$ large enough.

\begin{figure}[hbt]
    \centering
    \begin{minipage}{0.45\textwidth}
        \centering
        \includegraphics[width=0.9\textwidth]{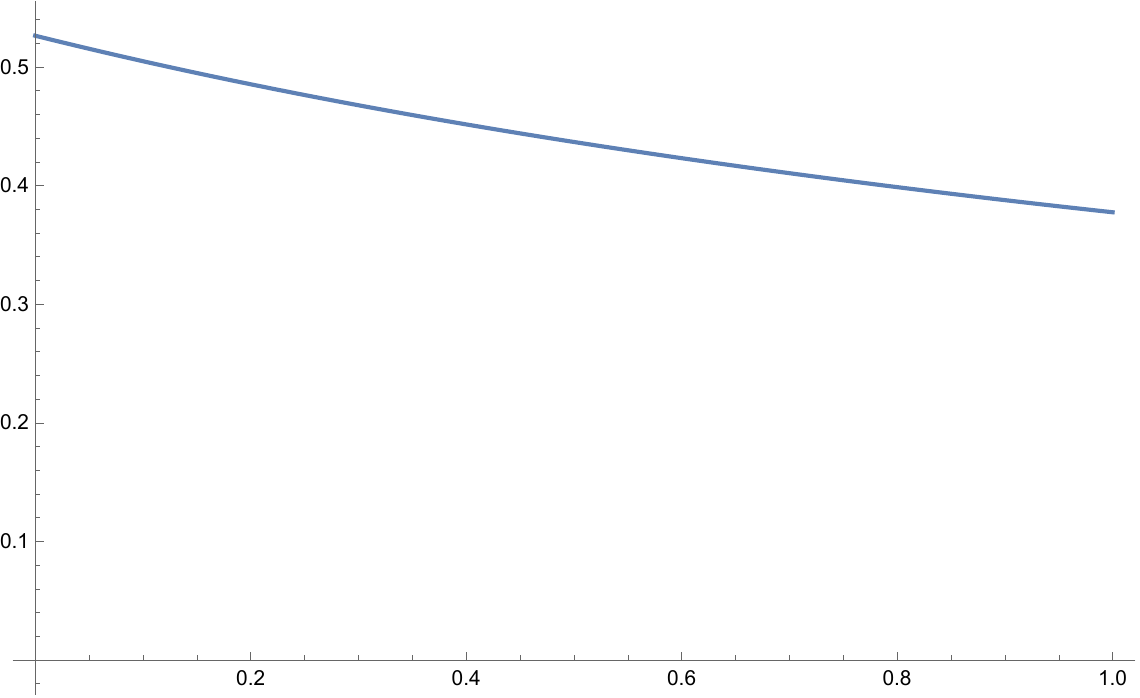} 

        $(a)$
    \end{minipage}\hfill
    \begin{minipage}{0.45\textwidth}
        \centering
        \includegraphics[width=0.9\textwidth]{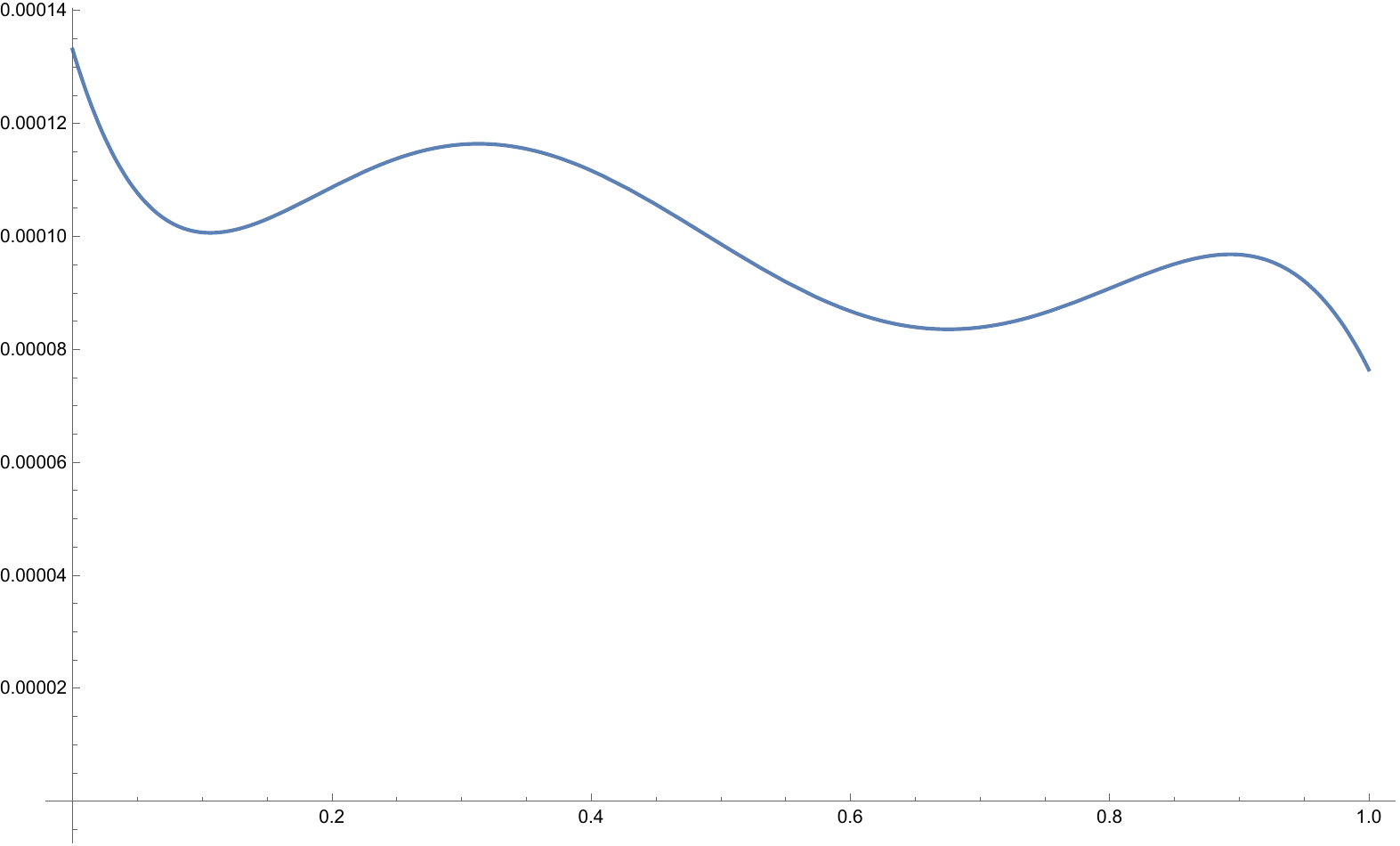} 

        $(b)$
    \end{minipage}
\caption{$(a)$ The test function $f_s$ in \eqref{eq:fsDef}, and $(b)$ difference $\sL_{s}f_s \ - \ f_s$ on $[0,1]$.}
    \label{fig:fs}
\end{figure}

Setting $s=\delta_0=0.775$, $N=\Nis$, and $A=\Ais$, (chosen for the simple reason that $A=100$ fails, even with large values of $N$), the procedure above gives the eigenvector
$$
v_s := (0.3798483, 0.3992862, 0.4366593, 0.4841648, 0.5207676),
$$
and corresponding test polynomial $f_s$ given by:
\begin{equation}\label{eq:fsDef}
f_s(x) := 0.0121844 x^4-0.0513245 x^3+0.116313 x^2-0.225988 x+0.526229.
\end{equation}
For $x\in[0,1]$, it can be verified that $f_s(x)>0.3$, see Figure \ref{fig:fs}$(a)$.
Moreover, it can be verified that on $[0,1]$, the difference $\sL_{s}f_s - f_s$ exceeds $7\times10^{-5}>0$, see Figure \ref{fig:fs}$(b)$. This confirms \eqref{eq:keyPerron}, and hence proves the  Main Dimension Theorem~\ref{thm:H-dim}.

\begin{rem}\label{rmk:A4}
The same techniques easily show that $\vt_A>1/2$ for $A=4$ and higher (and that $\vt_A<1/2$ for $A=2,3$). It is even easier to see that $\vt_A>0$ for $A=2$, as needed in \S\ref{sec:Pf1p1}.
\end{rem}

\smallskip

\subsection{Maximal dimension $\vt_\infty$}\label{sec:vtInfty}

We conclude this section by explaining Remark~\ref{rmk:vtInfty}.
We estimate $\vt_\infty\le \bar \vt$, where the latter is the Hausdorff dimension of the fractal $\bar\fC$ defined in \eqref{eq:barfCDef}.
The  method described in \S\ref{sec:pfThmHdim} cannot be applied directly to the problem of estimating $\bar\vt$,
in particular because the corresponding transfer operator \eqref{eq:LsDef} would now be an infinite sum. Instead, here is an elegant mechanism (again, generously explained to us by Polina Vytnova)
for establishing the estimate
\begin{equation}\label{eq:vtBnd}
\vt_\infty \le \bar \vt < 0.799.
\end{equation}

Let \. $f(x)=\sum_{n=0}^Na_n(x-1)^n$  \.
be a polynomial (expanded about $x=1$).
Then the transfer operator acting on $f$ can be expressed as
\begin{align*}
[\sL_s f](x) \ &: \ =
\sum_{b=1}^\infty \.
|T_b'(x)|^s
\cdot f( T_b(x)) \
= \
\sum_{b=1}^\infty \.
{1\over (1+b+x)^{2s}}
\cdot f\left( 1-{1\over 1+b+x}\right)
\\
&
\ = \
\sum_{b=1}^\infty \.
{1\over (1+b+x)^{2s}} \,
\sum_{n=0}^N \.
a_n\left({-1\over 1+b+x}\right)^n \
= \
\sum_{n=0}^N
a_n
(-1)^n
\zeta(2s+n,2+x)\.
,
\end{align*}
which is now again a finite sum.  Here
$$
\zeta(s,x) \ := \
\sum_{b=0}^\infty \.
{1\over (b+x)^{s}}
$$
is the Hurwitz zeta function.
An analogous statement to Lemma \ref{lem:keyPerron} holds with inequalities reversed: if there is a polynomial $f$ which is positive on $[0,1]$ such that
$$
[\sL_s f](x) < f(x)
$$
for all $x\in[0,1]$, then $\bar\vt<s$.

Now we proceed as before. Set $s=0.799$, and $N=5$, and using the same Chebyshev--Lagrange polynomials, compute the matrix $M_{j,k}=[\sL_s (\ell_j)](y_k)$. (One needs to first extract the polynomials' coefficients.) Its largest eigenvalue has eigenvector
$$
v_s=({0.3820795, 0.4007878, 0.4369026, 0.4830608, 0.5187994}),
$$
\begin{figure}[hbt]
    \centering
    \begin{minipage}{0.45\textwidth}
        \centering
        \includegraphics[width=0.9\textwidth]{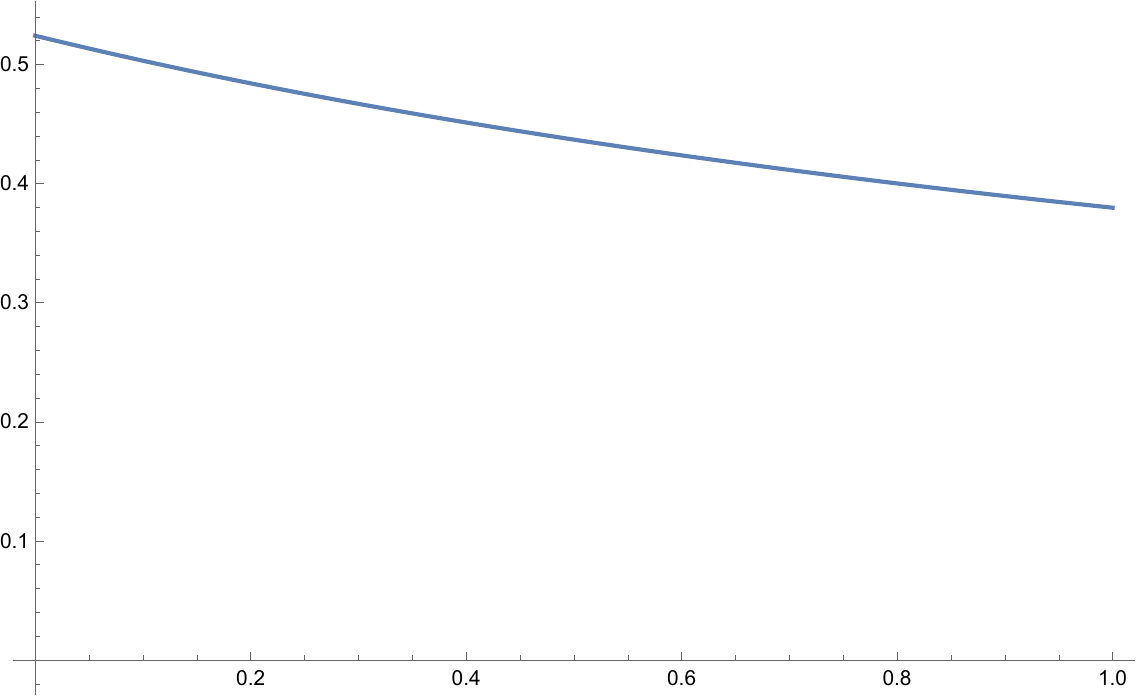} 

        $(a)$
    \end{minipage}\hfill
    \begin{minipage}{0.45\textwidth}
        \centering
        \includegraphics[width=0.9\textwidth]{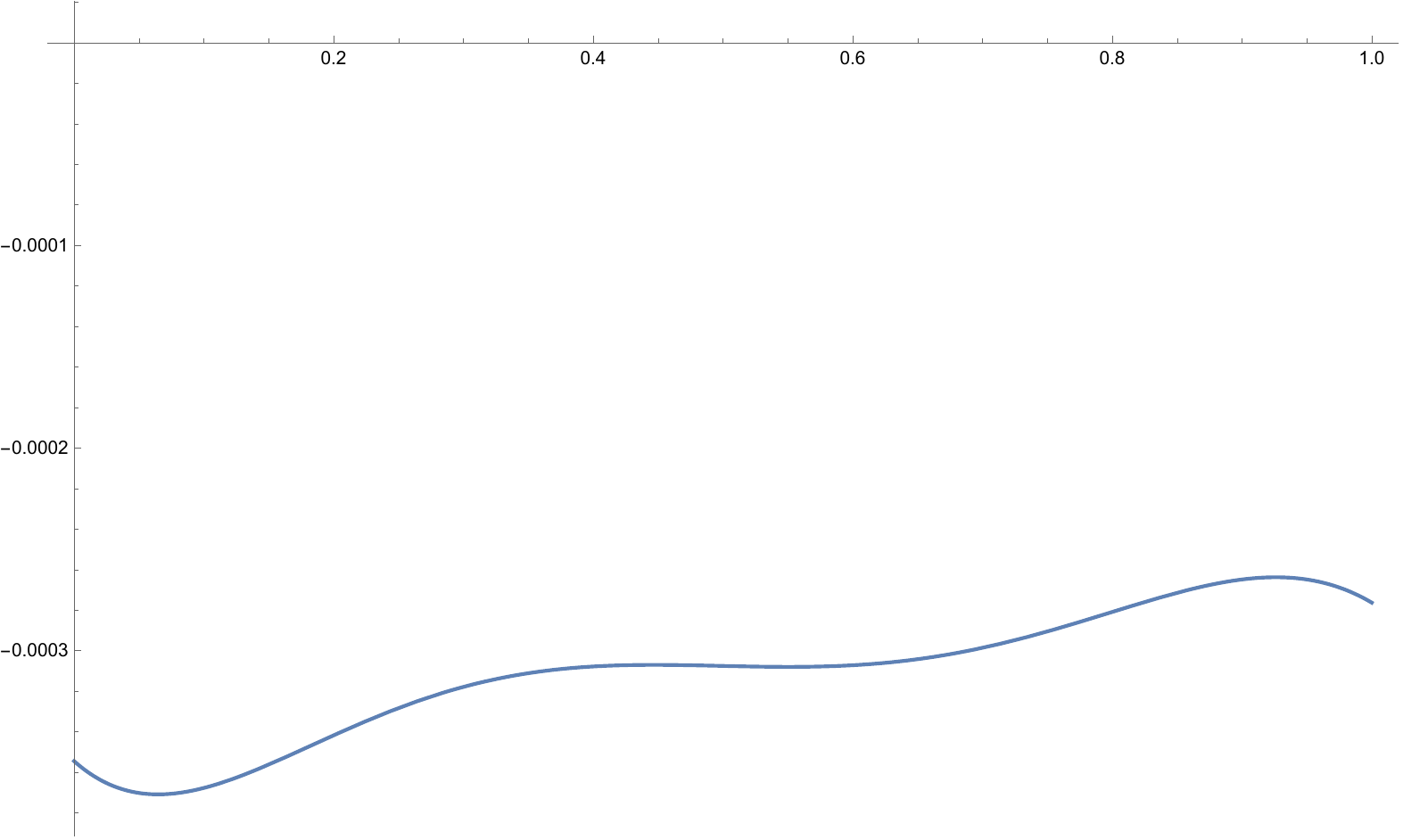} 

        $(b)$
    \end{minipage}
\caption{$(a)$ The test function $f_s$ in \eqref{eq:fs2}, and $(b)$ the difference $\sL_sf_s\, - \, f_s$.}
    \label{fig:Cinfty}
\end{figure}
and the corresponding test function $f_s=\sum v_s^{(j)}\ell_j$ is computed to be
\begin{equation}\label{eq:fs2}
f_s(x)=0.0123381 x^4-0.0517567 x^3+0.116202 x^2-0.221186 x+0.524143.
\end{equation}
This function exceeds $0.3$ on $[0,1]$, and $\sL_s f_s - f_s$ is less than $-0.0002$ on $[0,1]$, see Figure \ref{fig:Cinfty}. This confirms \eqref{eq:vtBnd}.

\medskip

\section{Final remarks and open problems}\label{s:finref}


\subsection{}\label{ss:exp-upper-bnd}
In notation of Main Theorem~\ref{t:main-exp}, one can ask for the
upper bound on the number \ts $|\cT(n)|$ \ts of distinct values of \ts
$\tau(G)$ \ts over all simple planar graphs with $n$ vertices.  The bound \ts $|\cT(n)| < 8^n$
is straightforward; indeed, note that for every planar
simple graph \ts $G=(V,E)$ on~$n$ vertices, we have \ts $\tau(G) < 2^{|E|} < 8^n$,
since \ts $|E|\le 3n-6$.
Writing \ts $\tau(G)$ \ts as a product of eigenvalues of the
Laplacian matrix and using the AM--GM inequality shows that \ts $\tau(G) < 6^n$,
see \cite[Eq.~(1)]{Gri76}.

Skipping over a series
of further improvements, the best known upper bound for the number of spanning
trees in a simple planar graph is \ts $\tau(G)=O(5.2852^n)$ \ts
given in \cite{BS10}.  The optimal base of exponent is probably much smaller,
see below.  However, since
$$
\lim_{n\to \infty} \. \bigl(\max \ts \cT(n)\bigr)^{\frac1n} \, \ge \, 5.0295\ts
$$
(see \cite[$\S$5.1]{Ribo}), this approach has very little room for further improvement.

\subsection{}\label{ss:finrem-limit}
For the lower bound on \ts $|\cT(n)|$ \ts given in Theorem~\ref{t:main-exp}, one can take \ts
$c=1.1103$.  This is derived from the lower bound \ts $\phi^{2\vt_A/(A+1)} \approx 1.1103$ \ts
deduced from 
the proof of Theorem~\ref{thm:main1p1}
in~$\S$\ref{sec:Pf1p1}, with \. $A=3$ \. and \. $0.435<\vt_A<0.436$.
Here \ts $\phi = \tfrac{1+\sqrt{5}}{2} \approx 1.618$ \ts is the
\emph{golden ratio}.  We made no effort in optimizing this constant.
We note, however, that  any lower bound  achievable by our
approach cannot exceed~$\ts\phi$.
Most notably,
in a follow-up to this paper, Alon--Bucic--Gishboliner~\cite{ABG} gave an improved lower bound of $c=1.49$ by  augmenting our approach with a clever counting argument. 
%
%
%
%
%
In view of Theorem~\ref{t:main-exp}, we make the following natural

\begin{conj}\label{conj:set-limit} \. There is a limit \.
$\ga \ts := \ts \lim_{n \to \infty} \ts  \big(|\cT_n|\big)^{\frac{1}{n}}$.
\end{conj}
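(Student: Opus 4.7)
The plan is to deduce the existence of $\ga$ from Fekete's subadditive lemma. The starting point is the following elementary construction: given connected simple planar graphs $G_1, G_2$ with $|V_i| = n_i$ and $\tau(G_i) = t_i$, identifying a single vertex of $G_1$ with a single vertex of $G_2$ produces a connected simple planar graph $G$ on $n_1 + n_2 - 1$ vertices with $\tau(G) = t_1 t_2$, by the standard multiplicativity of spanning tree counts across a cut vertex. This yields the inclusion
\[
\cT(n_1) \cdot \cT(n_2) \ \subseteq \ \cT(n_1 + n_2 - 1).
\]
Setting $a_n := \log|\cT(n)|$, an approximate super-additivity estimate of the form $a_{m+n-1} \ge a_m + a_n - o(m+n)$ would suffice, via a standard approximate Fekete lemma, to conclude that $\lim a_n/n$ exists, establishing $\ga$. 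The conjecture thus reduces to the multiplicative product-set estimate
\[
|\cT(m) \cdot \cT(n)| \ \ge \ e^{-o(m+n)}\cdot|\cT(m)|\cdot|\cT(n)|.
\]

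To control collisions in the product set, I would restrict to the subset $\cP(n) \subseteq \cT(n)$ of prime values of $\tau$. By unique factorization, each product $pq$ with $p \in \cP(m)$ and $q \in \cP(n)$ admits at most two representations in $\cP(m) \times \cP(n)$, so $|\cP(m) \cdot \cP(n)| \ge \tfrac12 |\cP(m)| \cdot |\cP(n)|$. Under the density-one strengthening of Theorem~\ref{t:main} indicated in Remark~\ref{rmk:Huang} --- namely $|\cT(n) \cap [1, c^n]| = (1-o(1))c^n$ --- the prime number theorem yields $|\cP(n)| \gtrsim c^n/n$. Feeding this into the inclusion above gives the bound $a_{m+n-1} \ge (m+n)\log c - O(\log(m+n))$.

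The main obstacle is that this lower bound is only genuinely super-additive (as opposed to merely reproducing a fixed exponential growth rate) if the true rate $\ga$ coincides with the rate $c$ supplied by the Bourgain--Kontorovich machinery. A priori $\ga$ may strictly exceed $c$, in which case $\cP(n)$ is of strictly smaller exponential order than $\cT(n)$ itself, and the argument above fails to close the Fekete loop. Bridging this gap would require either a quantitative strengthening of Conjecture~\ref{conj:main} that matches $c$ with $\ga$, or a replacement of $\cP(n)$ by a larger multiplicatively-independent subset of $\cT(n)$ --- for instance, the $y(n)$-rough elements of $\cT(n)$ for slowly growing $y(n)$ --- whose density in $\cT(n)$ would have to be controlled via arithmetic information on the graphs produced by the Main Graph Theorem~\ref{thm:graphMain} that the current Diophantine approach does not supply.
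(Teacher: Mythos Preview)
This statement is a \emph{conjecture} in the paper, explicitly left open; there is no proof in the paper to compare against.

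Your proposal is not a proof either, and to your credit you say so. The Fekete route via the inclusion $\cT(m)\cdot\cT(n)\subseteq\cT(m+n-1)$ is a natural first move, but the product-set inequality
\[
|\cT(m)\cdot\cT(n)| \ \ge \ e^{-o(m+n)}\,|\cT(m)|\,|\cT(n)|
\]
to which you reduce is the entire difficulty: it demands that the sets $\cT(n)$ be essentially free of multiplicative collisions, and nothing in the paper supplies such control. Your attempt to sidestep collisions by passing to the prime subset $\cP(n)$ fails for exactly the reason you identify --- any lower bound routed through the Bourgain--Kontorovich output is capped at the fixed rate $\log c$ from Theorem~\ref{t:main}, which yields $\liminf_n a_n/n\ge\log c$ but says nothing about $\limsup_n a_n/n$, and there is no mechanism tying $c$ to the putative limit~$\ga$. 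The same objection applies verbatim to $y$-rough numbers or to any other subset whose cardinality is bounded only through the density-one statement. In short, you have reduced one open problem to another of at least comparable difficulty; the conjecture remains open.
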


The conjecture implies that \. $1.49\le \ga \leq 5.2852$,
by the lower and upper bounds given above.
%
%

\subsection{}\label{ss:finrem-graphs}
Denote by  \ts $\cT'(n)$ \ts the set of spanning tree numbers \ts
$\tau(G)$ \ts as $G$ ranges over \emph{all} simple graphs on $n$ vertices,
%
i.e.\ without the planarity assumptions \cite{Sed69,Sed70}.
The sequence \ts $\{|\cT'(n)|\}$ \ts starts as
$$
1, \ 1, \ 2, \ 5, \ 16,  \ 65, \ 386, \ 3700, \ 55784, \ 1134526, \ 27053464\ts,
$$
see \cite[\href{https://oeis.org/A182290}{A182290}]{OEIS}.  By
Theorem~\ref{t:main-exp} and Cayley's formula, we have:
$$
c^n \, \le \, |\cT(n)| \, \le \, |\cT'(n)| \, \le \, \tau(K_n) \, = \, n^{n-2}.
$$
Note that the expected number of spanning trees in a uniform random labeled
graph on $n$ vertices is rather large, and equal to \ts $n^{n-2}/2^{n-1}$.
Note also that the number of unlabeled graphs on $n$ vertices is even larger,
and asymptotically equal to \ts $2^{\binom{n}{2}}/n!\ts$, see e.g.\ \cite[$\S$6.9.2]{Noy15}
and \cite[\href{https://oeis.org/A000088}{A000088}]{OEIS}.
These suggest the following:\footnote{In fact, the first few values
of the sequence suggest a stronger bound: \. $|\cT'(n)| = e^{(1-o(1)) \ts n\log n}$\.. }

\begin{conj}\label{conj:set-all} \. $|\cT'(n)| = e^{\Omega(n\log n)}$.
\end{conj}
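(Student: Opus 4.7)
Conjecture~\ref{conj:set-all} posits the dramatically stronger growth rate $e^{\Omega(n\log n)}$ for $|\cT'(n)|$, versus the $c^n$ established by Theorem~\ref{t:main-exp}. I do not see a direct path via the continued-fraction approach of this paper: the construction of Theorem~\ref{thm:graphMain} builds simple graphs on $n$ vertices with $\tau(G)$ of roughly multiplicative form in parameters ranging over a bounded alphabet $[1,M]$ at cost of $B$ vertices per parameter (where $B$ is the average size of $b_i$), yielding at most $M^{n/B}$ distinct values of $\tau$, which is $e^{O(n)}$ regardless of $M$. So any approach to the conjecture requires a parametrization whose effective ``alphabet'' grows polynomially in $n$.

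In the spirit of this paper, my plan would be as follows. First, I would dualize as in \S\ref{ss:intro-back} by considering $\al'(k) := \min\{|V(G)| : G \text{ simple, } \tau(G)=k\}$; Conjecture~\ref{conj:set-all} is essentially equivalent to $\al'(k) \le (1+o(1))\log k/\log\log k$ holding for a positive proportion of $k$. Second, I would construct a non-planar analog of Theorem~\ref{thm:graphMain} that uses branching rather than linear chaining of the $\h^k$ operations of \S\ref{ss:CF-proof}. A balanced tree-shaped gadget, for example, could host many independent parameters of size up to $n^{O(1)}$ while keeping the vertex total at $O(n)$, the non-planarity arising naturally from the higher connectivity at branch points. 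Third, I would try to prove a multi-dimensional Diophantine theorem showing that the resulting family of graphs realizes $e^{\Omega(n\log n)}$ distinct spanning-tree counts.

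An alternative route would bypass Diophantine machinery in favor of direct counting. Since there are $\sim 2^{\binom{n}{2}}/n!$ nonisomorphic simple graphs on $n$ vertices, and $\tau(G)\le n^{n-2}$, it suffices to show that the map $G\mapsto\tau(G)$ is not too highly concentrated. Tools such as small-ball probability estimates for the Laplacian cofactor $\det(L(G)_{11})$, viewed as a polynomial in the adjacency entries of $G$, could potentially exhibit $e^{\Omega(n\log n)}$ distinct values within the image of this polynomial map.

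The main obstacle, in either approach, is ruling out collisions at a super-exponential scale. In the Diophantine approach, one would need a higher-dimensional generalization of the Bourgain--Kontorovich technology of \S\ref{s:BK}, for which the current Hausdorff-dimension and expander-graph methods are not adapted; even the relevant Cantor-like limit sets in multi-dimensional continued fractions are poorly understood. In the counting approach, $\tau$ of a uniformly random simple graph is tightly concentrated near $n^{n-2}/2^{n-1}$, so one must argue carefully about the distribution of $\tau$-values over structured, non-uniform ensembles of graphs---a delicate and seemingly open question.
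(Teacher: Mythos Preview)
The statement is a \emph{conjecture}, and the paper offers no proof. It is presented in the final remarks (\S\ref{ss:finrem-graphs}) with only heuristic motivation: the expected value of $\tau(G)$ for a uniform random labeled graph is $n^{n-2}/2^{n-1}$, and the number of unlabeled simple graphs is asymptotically $2^{\binom{n}{2}}/n!$, which together suggest (but do not imply) that $e^{\Omega(n\log n)}$ distinct values of $\tau$ should occur. A footnote even speculates that the stronger $|\cT'(n)| = e^{(1-o(1))n\log n}$ may hold, based on the initial terms of the sequence.

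Your proposal is not a proof either, and you correctly identify it as such. Your analysis of why the paper's continued-fraction machinery cannot reach $e^{\Omega(n\log n)}$ is sound: any construction with a bounded alphabet and linear vertex cost per symbol is capped at exponential growth. Your two suggested routes --- a branching gadget with polynomially-growing alphabet, and anti-concentration for the Laplacian cofactor over random or structured ensembles --- are both reasonable directions, and your second route is essentially the heuristic the paper itself gestures at. Your identification of the obstacles (no multi-dimensional Bourgain--Kontorovich theory; tight concentration of $\tau$ for uniform random graphs) is accurate. In short, there is nothing to correct here: the paper does not prove the conjecture, and you have given a fair assessment of why it lies beyond current methods.
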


Following Sedl\'a\v{c}ek \cite{Sed70}, let
\ts $\al'(\kk)$ \ts denote the minimal number
of vertices of a simple (and not necessarily planar) graph with exactly~$\kk$ spanning trees.
Clearly, we have \ts $\al'(\kk)\le \al(\kk)$, but potentially \ts $\al'(\kk)$ \ts
is much smaller.  Cayley's formula gives a natural
lower bound \ts $\al'(\kk) = \Omega(\log \kk/\log \log \kk)$ \ts in this case.
Sedl\'a\v{c}ek originally conjectured that \ts $\al'(\kk)=o(\log \kk)$ \cite{Sed70}.
Stong's Theorem~\ref{t:Stong} remains the best known upper bound for \ts $\al'(\kk)$.

\subsection{}\label{ss:finrem-det}
By the {matrix-tree theorem},  the number of spanning trees of a simple
graph is a determinant of an integral matrix with off-diagonal entries in~$\{0,-1\}$.
The set of possible determinant values for various classes of combinatorial
matrices is interesting in its own right and closely related to
\emph{Hadamard's maximal determinant problem} and the \emph{determinant
spectrum problem} which remain unresolved.  Given the state of art in these
two problems, we speculate that Conjecture~\ref{conj:main-beta} is likely to be
more accessible than Conjecture~\ref{conj:set-all}.  We refer to \cite{Shah}
and a discussion in \cite[$\S$6.10]{CP-coinc} for further references.
Note also that Conjecture~\ref{conj:set-all} is also related to
the distribution of determinant of random $\{\pm 1\}$ matrix, see
\cite[Conj.~6.8]{Vu21}.

\subsection{}\label{sec:Spec}
The following connection between counting spanning tree numbers and
spectra of operators on locally homogeneous spaces was pointed out
by Peter Sarnak in his Chern lectures \cite{SarnakChern}.
In light of the exponential growth of $\tau$, it is natural to introduce, for a finite graph $G$, the quantity
$$
s(G) \ := \ {\log \tau(G)\over |G|}.
$$
Here, by $|G|$, we mean the number of vertices, $|V|$. Again, by the matrix-tree theorem, this is a ``spectral'' quantity, and it is interesting to investigate, for a family $\cF$ of graphs, the \emph{spanning tree spectrum}:
$$
\text{{\sc Spec}}(\cF) \ := \ \{s(G) \, : \, G\in\cF\}' ,
$$
that is, the set of limit points of $s(G)$, as $G$ ranges in $\cF$.
The density-one version of Theorem~\ref{t:main} then shows, for the family $\cF=\text{{\sc Planar}}$ of simple, planar graphs, that
$$
[0,c) \ \subseteq \ \text{{\sc Spec}}(\text{{\sc Planar}}) \ \subseteq\  [0,C),
$$
for some $0<c<C$, where $c$ comes from Theorem~\ref{t:main} and the upper bound $C$ is related to the discussion
in~$\S$\ref{ss:exp-upper-bnd}.

By contrast, for the family of $k$-regular graphs, $\cF=\text{$k$-\sc{Regular}}$, we have that
$$
\text{{\sc Spec}}(\text{$k$-\sc{Regular}}) \ \subseteq\  (c,C].
$$
for some $0<c<C$. Here the upper bound $C=C_k$ is determined explicitly by work of McKay \cite{McKay1983}, and
the lower bound $c=c_k$ is given by Alon in \cite{Alon1990}.

In the family $\cF=\text{{\sc Simple}}$ of all finite simple graphs, the spanning tree spectrum contains $0$ (due already to trees), and $\infty$ (e.g., from Cayley's formula). It is interesting to investigate these quantities further.

\subsection{}\label{ss:finrem-ave}
In \cite{CP-CF,CP-SY}, the authors used the average cases analysis of the
sum of partial quotients of continued fractions, applying sharp bounds of Larcher
\cite{Lar86} and Rukavishnikova \cite{Ruk11} to get bounds on $\be(\kk)$ discussed
in the introduction.  While these bounds hold for all~$\kk$, the resulting extra
\ts $O(\log \log \kk)$ \ts factors are unavoidable with these tools.
In the terminology of \cite{YK75} (see also  \cite[$\S$4.5.3]{Knuth98}), the sum
of partial quotients is the number of steps of the \emph{subtraction algorithm},
the original (classical) version of the Euclidean algorithm for finding the greatest
common divisor that uses only subtractions instead of divisions.   Finally, in \cite{CP-SY},
it was shown that Zaremba's Conjecture~\ref{conj:Zaremba} implies that \ts $\be(\kk) = \Theta(\log \kk)$.
The authors also produced a series of weaker conjectures, all of which remain open.

\subsection{}\label{ss:finrem-asy}
There is a further refinement in the asymptotic formula in \eqref{eq:O-admissible}.
Namely,  the work of \cite{Bou18} and \cite{MOW19} shows that the asymptotic
value~$1$ in Theorem~\ref{t:BK1} is approached with a power savings: \.
LHS~$\ts = \ts 1+O(N^{-\eta})$ \. for some $\eta>0$.   We further remark
that a recent work of Rickards and Stange \cite{RS24}, shows that it is possible
to exhibit finitely generated semigroups $\G\subset \SL_2(\Z)$,
and base vectors $v_0,v_1\in\Z^2$, such that none of the elements
of the orbit \ts $\cO:=\<v_1\G,v_0\>$ are, e.g., perfect squares.
This disproves a conjecture by Bourgain and Kontorovich \cite[Conj.~1.11]{BK18},
and shows that there can be a Brauer--Manin type reciprocity obstruction,
meaning that a power savings asymptotic error is best possible in this generality.
The full local-global principle, that, say, $\cO$ contains \emph{every}
sufficiently large integer,  may well be true for our~$\G_A$ (for $A$ sufficiently large); this would imply the Diophantine Conjecture~\ref{conj:Zaremba-strong}.
Unfortunately, the general orbital circle method cannot distinguish
this setting from the similar setting where the statement is false. Therefore, these techniques
will not, without significant further ideas, be able to upgrade Theorem~\ref{thm:main2} to the full Conjecture~\ref{conj:main}.

\subsection{}\label{ss:finrem-Zaremba}
In Zaremba's Conjecture~\ref{conj:Zaremba}, it is known that $A=4$ is not enough
(take $u=6$ or~$54$).  Hensley (1996)
conjectured that one can take \ts $A=2$ \ts for \ts $d$ \ts large enough.
Zaremba's conjecture is known to hold for integers of the form \ts $2^m\ts 3^n$
and for sufficiently large powers of all primes, where the constant~$A$ can depend
on the prime, see \cite{Nid86,Shu24} for details.  We refer to \cite[$\S$6.2]{BPSZ14}
for an elegant presentation of the \ts $2^m$ \ts case.

\subsection{}\label{ss:Pollicott}
After this paper was written, Pollicott~\cite{Pol} computed the first 20 digits of
Hausdorff dimensions $\vt_A$ for
\[  \vt_{108} \ = \ 0.77474... \ , \qquad  \vt_{109} \ = \ 0.77490... \ , \qquad \vt_{\infty} \ = \ 0.79885...,     \]
of which we display only the first five digits here for brevity.
In particular this implies \. $\vt_{109} > (\sqrt{40}-4)/3 \approx 0.77485$\., so taking $A=109$ suffices 
for the proof of   Theorem~\ref{t:main}.

\vskip.6cm
{\small

\subsection*{Acknowledgements}
We are grateful to Noga Alon, Milan Haiman, Dmitry Krachun, Noah Kravitz,
Mark Pollicott, Peter Sarnak, Mehtaab Sawhney, Ilya Shkredov and
Wadim Zudilin for
interesting discussions and helpful remarks.  We thank Peter Sarnak for pointing
out the connection to spectra of operators discussed in~$\S$\ref{sec:Spec},
to Nikita Shulga for telling us about \cite{HanclTurek2023},
and to Oliver Jenkinson for emailing us~\cite{Jen04}.
We are especially grateful to Polina Vytnova for
patiently explaining to us the key ideas in \cite{PV22}
for rigorously computing Hausdorff dimensions.

SHC~was supported by NSF grant DMS-2246845.  AK~was supported by
NSF grant DMS-2302641, BSF grant 2020119 and a Simons Fellowship.
IP~was supported by NSF grant CCF-2302173.  This paper
was written when AK was visiting Princeton University and
IP was a member at the Institute of Advanced Study in Princeton,~NJ.
We are grateful for the hospitality.
}

\vskip1.1cm


{\footnotesize

\vskip.6cm
}

\end{document}